\newcommand{\Gr}{\mathbf{Gr}}
\newcommand{\Ax}{\mathbf{Ax}}
\newcommand{\AxL}{\mathbf{AxL}}
\newcommand{\AxC}{\mathbf{AxC}}
\newcommand{\OrdL}{\mathbf{OrdL}}
\newcommand{\OrdB}{\mathbf{OrdB}}
\newcommand{\OrdC}{\mathbf{OrdCL}}
\newcommand{\OrdCB}{\mathbf{OrdCB}}
\newcommand{\AxPC}{\mathbf{AxPL}}
\newcommand{\PB}{\mathbf{PB}}
\newcommand{\AxPCL}{\mathbf{AxPCL}}
\newcommand{\PCB}{\mathbf{PCB}}
\newcommand{\CC}{\mathbf{CC}}
\newcommand{\Isolated}{\mathbf{Isolated}}
\title{Automated reasoning for proving non-orderability of groups}
\author{Alexei Lisitsa\inst{1}
\and
Zipei Nie\inst{2} \and
Alexei Vernitski\inst{3}
}
\institute{University of Liverpool,
  Liverpool, UK \\
\texttt{a.lisitsa@liverpool.ac.uk}\and
  Lagrange Mathematics and Computing Research Center, Huawei\\
\texttt{niezipei@huawei.com}\and 
University of Essex, Essex, UK\\
\texttt{asvern@essex.ac.uk}
}
\begin{document}

\maketitle

\begin{abstract}
We demonstrate how a generic automated theorem prover can be applied to establish the non-orderability of groups. Our approach incorporates various tools such as positive cones, torsions, generalised torsions and cofinal elements.
\end{abstract}

\section{Introduction}
The study of orderable groups has a rich history \cite{kokorin1974fully,mura1977orderable,kopytov1996right,glass1999partially} among group theorists. In the last decade of the 20th century, researchers gradually recognised the significance of orderability in topology, and it has since remained an active and thriving area of research. 

The three most extensively studied variants of group orderability are left order, bi-order, and (left-invariant) circular order. In this paper, we will also study the existence of bi-invariant circular orders. We consider the following algorithmic problem. 

\begin{problem}\label{Problem}
    Given a group $G$ with presentation $\langle S|R \rangle$ and a type of group orderability, determine whether $G$ is orderable.
\end{problem}

In general, Problem~\ref{Problem} has been proven to be undecidable for any type of orderability by \cite[Theorem 3.3]{baumslag2012algorithms}. Despite its undecidability, the problem remains highly intriguing when considering specific groups that arise in topology. Notably, the braid groups \cite{short1999orderings,rolfsen1998braids}, the mapping class groups \cite{rourke2000order,hyde2019group}, the fundamental groups of $3$-manifolds \cite{boyer2005orderable,boyer2013spaces,juhasz2015survey,ito2016alexander}, and lattices in Lie groups \cite{witte1994arithmetic} are among the groups where Problem~\ref{Problem} holds great significance.

We focus on the non-orderability aspect of Problem~\ref{Problem}, which involves finding a contradiction assuming the existence of an order. We believe that, by providing simple proofs demonstrating the non-orderability of a group of interest, one may deepen our understanding and reveal intriguing topological structures.

Most previous automated proofs of non-orderability are variants of the algorithm described in \cite[Section 8]{calegari2003laminations}. This algorithm relies on a short-lex automatic structure to tackle the word problem and seeks to identify contradictions within the positive cone of a left order. Later, Dunfield \cite{dunfield2020floer} enhanced this algorithm for the fundamental group of a finite-volume hyperbolic $3$-manifold by solving the word problem through an $SL_2(\mathbb{C})$ representation.

We present a methodology for establishing non-orderability using generic automated theorem proving instead of specialized algorithms. In contrast to previous approaches, our method offers a unified framework capable of handling all variants of orderability without any assumptions about the group. The flexibility of the imposed assumptions makes it easier to discover new proofs and results in non-orderability.

We provide many examples to illustrate our methodology, ranging in difficulty. A particularly interesting one is Example~\ref{ex:Hyde}, where we provide an alternative proof of the non-left-orderability \cite{hyde2019group} of $\mathrm{Homeo}(D,\partial D)$, the group of homeomorphisms of the disk fixing the boundary, using the concept of left absolutely cofinal elements; see also \cite{triestino2021james}.

\subsection{Main results and organisation of the paper}
In Section~\ref{sec:FP}, we present the first principles approach. As its name suggests, we directly break down the original problem into axioms in first-order logic, and then prove the non-orderability through the automatically-derived contradiction. To derive sufficient axioms from the group presentation, a necessary technical step is to identify inequalities using an automated finite model finder.

When compared to specific algorithms, like the one described in \cite[Section 8]{calegari2003laminations}, our approach offers a significant advantage: the ability to readily modify input axioms. We have noticed that the axiom of connectedness consumes excessive computational resources. Therefore, in Subsection~\ref{subsec:weakened}, we introduce the weakened theory approach, in which we substitute the axiom of connectedness with weaker assumptions.

In Subsection~\ref{subsec:positive_cone}, we discuss a further optimisation of our approach: the positive cone translation. This method has been used in the literature for establishing non-left-orderability and non-bi-orderability. We extend its applicability to left-invariant or bi-invariant circular orders. From the perspective of automated reasoning, the positive cone translation results in an equiconsistent theory with predicates of smaller arities, thereby enhancing the efficiency of our reasoning process.

In Section~\ref{sec:torsions}, we establish the equivalence between an element $g\in G$ being a torsion or a generalised torsion and the inconsistency of the corresponding weakened theory with respect to the pair $(e,g)$. One may compare this result with the well-known fact that the existence of a nontrivial torsion (resp. generalised torsion) implies non-left-orderability (resp. non-bi-orderability).

Furthermore, we extend this analysis to bi-invariant circular orders. If the axiom of cyclicity is dropped, and the axiom of connectedness is weakened with respect to the triple $(e, fgf^{-1}, g)$, then the theory for bi-invariant circular orders is inconsistent if and only if $f^{-1}$ is in the monoid generated by $f$ and the centraliser of $g$.

In Section~\ref{sec:convexity}, we further study the strength of the axiom of cyclicity for bi-invariant circular orders. For this purpose, we develop the theory of left relatively convex subgroups defined by Antol{\i}n, Dicks, and Sunic \cite{antolin2015left}. Unlike previous works such as \cite{kopytov1996right}, their definition does not impose restrictions on the left-orderability of the ambient group. We extend some properties of relatively convex subgroups of left-orderable groups to general cases.
\begin{enumerate}[label=(\alph*)]
    \item The equivalence (a) $\Leftrightarrow$ (c) in Proposition~\ref{prop-rel-convex-1} generalises the usual definition of relatively convex subgroups of left-orderable groups.
    \item The equivalence (a) $\Leftrightarrow$ (b) in Proposition~\ref{prop-rel-convex-2} generalises \cite[Theorem 1.4.10]{clay2010space}.
    \item Proposition~\ref{convex-intersection} generalises \cite[Proposition 5.1.10]{kopytov1996right}, establishing the closure property under arbitrary intersection.
\end{enumerate} 
Using the theory of left relatively convex subgroups, we demonstrate that a relation on the group $G$ satisfying all axioms except cyclicity exists if and only if the centraliser of each subset of $G$ is left relatively convex in $G$.

In Section~\ref{sec:cofinality}, we introduce the concept of the left relatively convex subgroup closure, denoted as $\mathrm{cl}(A)$, for a subset $A$. This closure is defined as the intersection of all left relatively convex subgroups containing $A$. Additionally, we define the left absolute cofinal subgroup as a subgroup $H$ for which $\mathrm{cl}(H)=G$. By the equivalence (a) $\Leftrightarrow$ (c) established in Proposition~\ref{prop-equiv-cofinal}, a subgroup is left absolutely cofinal if and only if it is cofinal with respect to every left total preorder. 

Based on the search for left absolute cofinal cyclic subgroups, we introduce a methodology for establishing the non-left-orderability of every nontrivial quotient using automated theorem proving. Our approach formalises a common practice in establishing the non-left-orderability, which involves tracking fixed points in the dynamical realisation. We believe this formalisation could also be used in other scenarios, including the development of fast algorithms such as the one detailed in \cite[Section 8]{calegari2003laminations}, as well as in human proofs.

Propositions~\ref{prop:main}, \ref{prop:weak}, \ref{weak-cbo}, \ref{symmetry}, \ref{prop:positive_cone}, \ref{prop:positive_cone_C}, \ref{prop-torsion}, \ref{prop-generalised-torsion}, \ref{prop-CBO-monoid}, \ref{prop-cl}, \ref{prop-fixed-point} establish the group theoretic implications of the consistency or inconsistency of various theories. The utilisation of an automated theorem prover like Prover9 enables the automatic detection of contradictions in a first-order theory, thereby implying corresponding group theoretic properties.

We provide numerous examples at the end of each section to illustrate our methodology. In Section~\ref{sec:tasks}, we elaborate our approach for performing automated reasoning tasks in these examples using Prover9 and Mace4. Furthermore, we provide a summary of the performance of these automated theorem provers when executing these tasks.

\section{Non-orderability from first principles} \label{sec:FP}

In this section, we demonstrate a methodology for reducing the non-orderability of a group $G$ with presentation $\langle S|R\rangle$ to automated theorem proving tasks. Specifically, we address the following variant of Problem~\ref{Problem}.
\begin{problem}\label{Problem'}
    Given a group $G$ with presentation $\langle S|R \rangle$ and a type of group orderability, provide an automated proof of the non-orderability of $G$.
\end{problem}

To achieve this goal, we first extract the assumptions of our problem setting in first-order logic. Then we utilise two tools for experimental evaluation: Prover9 and Mace4 \cite{prover9-mace4}. Prover9 is a widely used automated theorem prover that operates on first-order logic. It takes a set of logical axioms as input and attempts to find a proof of a conjecture. And the finite model finder Mace4 is designed to search for finite models that satisfy a given set of first-order logical formulas.

\subsection{Group axioms}\label{subsec:group_axioms}
A group is a set together with an associative binary operation $\cdot$, such that there exists an identity element, and every element has an inverse. In a first-order logic perspective, we consider the groups as models for the following standard system of axioms $\Gr$ in a vocabulary consisting a binary functional symbol $\cdot$ for group multiplication, a unary functional symbol $'$ (in postfix notation) for group inverse operation, and a constant $e$ for the identity element in the group: 
\begin{enumerate}[label=(\alph*)]
    \item $\forall x \forall y \forall z((x \cdot y) \cdot z = x \cdot (y \cdot z))$,\hfill\emph{(associativity)} 
    \item $\forall x(x \cdot e = e \cdot x = x)$,\hfill\emph{(identity element)} 
    \item $\forall x(x'\cdot x = x \cdot x' = e)$.\hfill\emph{(inverse element)} 
\end{enumerate}

For the group with presentation $\langle S|R\rangle$, we encode every generator in $S$ as an additional constant, and every relation in $R$ as an additional equational axiom, in a standard way. We denote this system of axioms by $\Ax_R$.

In addition to the group $G$ presented by $\langle S|R\rangle$, any quotient group of $G$ is also a model of $\Gr \cup \Ax_R$. To establish non-orderability, it is crucial to distinguish between $G$ and its quotient groups. Therefore, we need to include an additional situation-dependent set $\mathfrak{S}$ of true statements for the group $G$. In practice, $\mathfrak{S}$ can be taken as a set of inequalities in $G$.

\subsection{Order axioms}\label{subsec:order_axioms}
By definition, a \emph{left order} on a group $G$ is a linear order $<$ on $G$ that is invariant under left multiplication, and a \emph{bi-order} is one that is invariant under left and right multiplication. Thus the left order and bi-order satisfy the following axioms $\AxL$ for linear orders:
\begin{enumerate} [label=(\alph*)]
\item $\forall x (\neg (x < x))$,  \hfill\emph{(irreflexivity)} 
\item $\forall x \forall y \forall z((x < y) \land  (y < z) \to  (x < z))$, \hfill\emph{(transitivity)} 
\item $\forall x \forall y ((x=y) \lor (x < y)\lor(y<x))$. \hfill\emph{(connectedness)} 
\end{enumerate}

In addition, the left order satisfies the left-invariance axiom $\OrdL$:
\begin{enumerate} [label=(\alph*)]
\item[(d)] $\forall x \forall y \forall z ((x<y) \to  (z\cdot x < z\cdot y))$. \hfill\emph{(left-invariance)} 
\end{enumerate}
And the bi-order satisfies the bi-invariance axiom $\OrdB$:
\begin{enumerate} [label=(\alph*)]
\item[(e)] $\forall x \forall y \forall z \forall u ((x<y) \to  ((z\cdot x)\cdot u < (z\cdot y)\cdot u))$. \hfill\emph{(bi-invariance)} 
\end{enumerate}

By definition, a \emph{cyclic order} on a set is a ternary relation $C(\cdot,\cdot,\cdot)$ satisfying the following axioms $\AxC$:
\begin{enumerate} [label=(\alph*)]
\item $\forall x \forall y \forall z(C(x,y,z) \to  C(y,z,x))$, \hfill\emph{(cyclicity)} 
\item $\forall x \forall y (\neg C(x, y, y))$,  \hfill\emph{(irreflexivity)} 
\item $\forall x \forall y\forall z \forall u (C(x,y,z)\land C(x,z,u) \to C(x,y,u))$,  \hfill\emph{(transitivity)} 
\item $\forall x \forall y\forall z ((x=y)\lor(y=z)\lor(z=x)\lor C(x,y,z)\lor C(x,z,y)) $. \hfill\emph{(connectedness)} 
\end{enumerate}

When referring to a \emph{circular order} on a group $G$, it is conventionally understood as a cyclic order on the elements of $G$ that is invariant under left multiplication. So the circular order also satisfies the left-invariance axiom $\OrdC$:
\begin{enumerate} [label=(\alph*)]
\item[(e)] $\forall x \forall y \forall z\forall u (C(x,y,z) \to  C(u\cdot x ,u\cdot y,u\cdot z))$. \hfill\emph{(left-invariance)} 
\end{enumerate}
And the \emph{bi-invariant circular order} satisfies the bi-invariance axiom $\OrdCB$:
\begin{enumerate} [label=(\alph*)]
\item[(f)] $\forall x \forall y \forall z \forall u\forall v (C(x,y,z) \to  C((u\cdot x)\cdot v ,(u\cdot y)\cdot v,(u\cdot z)\cdot v))$. \hfill\emph{(bi-invariance)} 
\end{enumerate}

\subsection{The first principles approach}
If a group is orderable, then with the corresponding order it would constitute a model for the first-order theory with applicable axioms from Subsection~\ref{subsec:group_axioms} and Subsection~\ref{subsec:order_axioms}, which in turn would entail that the theory is consistent. In other words, we have the following proposition.

\begin{proposition} \label{prop:main}
    Let $G$ be a group with presentation $\langle S|R\rangle$. Let $\mathfrak{S}$ be a set of true statements for $G$. Then the following statements hold:
    \begin{enumerate}[label=(\alph*)]
        \item If $\Gr \cup \Ax_R\cup \AxL\cup \OrdL \cup \mathfrak{S}$ is inconsistent, then $G$ is not left-orderable.
        \item If $\Gr \cup \Ax_R\cup\AxL \cup \OrdB \cup \mathfrak{S}$ is inconsistent, then $G$ is not bi-orderable.
        \item If $\Gr \cup \Ax_R\cup \AxC \cup \OrdC \cup \mathfrak{S}$ is inconsistent, then $G$ is not circularly orderable.
        \item If $\Gr \cup \Ax_R\cup\AxC \cup \OrdCB \cup \mathfrak{S}$ is inconsistent, then $G$ does not admit a bi-invariant circular order.
    \end{enumerate}
\end{proposition}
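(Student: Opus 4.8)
The plan is to prove each of the four implications by contraposition: assuming that $G$ is orderable of the relevant type, I would exhibit a model of the corresponding first-order theory, and then conclude consistency from the soundness of first-order logic, since any set of sentences possessing a model is consistent. All four parts share the same structure and differ only in which order axioms are invoked and in what ``orderable'' means, so I would treat them uniformly.

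For concreteness, consider part (a). Suppose $G$ admits a left order $<$. I would take the underlying set of $G$ as the domain of a structure $\mathcal{M}$, interpret the function symbols $\cdot$, $'$ and the constant $e$ by the group multiplication, inversion and identity of $G$, interpret each generator constant from $S$ by the corresponding element of $G$, and interpret the binary relation symbol $<$ by the given left order. The remaining parts are identical except that the order symbol is interpreted by a bi-order (b), by a circular order $C$ (c), or by a bi-invariant circular order (d).

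It then remains to check that $\mathcal{M}$ satisfies every axiom of the theory. The group axioms $\Gr$ hold because $G$ is a group. The relational axioms $\Ax_R$ hold because, in the presentation $\langle S \mid R\rangle$, the chosen interpretations of the generator constants satisfy exactly the defining relations $R$. The statements in $\mathfrak{S}$ hold by hypothesis, since $\mathfrak{S}$ consists of true statements for $G$. Finally, the order axioms hold because the first-order formulas in $\AxL \cup \OrdL$ (respectively $\AxL \cup \OrdB$, $\AxC \cup \OrdC$, $\AxC \cup \OrdCB$) are precisely the formal transcription of the definition of a left order (respectively bi-order, circular order, bi-invariant circular order): irreflexivity, transitivity and connectedness capture the linear or cyclic order, while the invariance axioms capture invariance under left (and right) multiplication. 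Having verified all axioms, $\mathcal{M}$ is a model of the theory, which is therefore consistent; contraposition yields the stated implication.

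The argument is essentially a soundness argument, so there is no deep obstacle. The only point requiring care is the bookkeeping that matches each first-order axiom to the corresponding clause in the semantic definition of the order --- in particular confirming that the invariance axioms $\OrdL$, $\OrdB$, $\OrdC$ and $\OrdCB$ faithfully encode one-sided or two-sided invariance --- together with verifying that the constants for the generators are interpreted consistently with the relations $R$, so that $\Ax_R$ genuinely holds in $\mathcal{M}$.
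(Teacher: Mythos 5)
Your proof is correct and matches the paper's argument exactly: the paper justifies this proposition with the same contraposition, observing that an orderable group together with its order constitutes a model of the corresponding theory, whence the theory is consistent. Your write-up simply spells out the model-checking bookkeeping that the paper leaves implicit.
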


According to Proposition~\ref{prop:main}, given the group presentation, to establish non-orderability of the presented group, one can apply an automated theorem prover in the first-order logic to derive a contradiction from the corresponding theory. 

Let us outline some important observations:
 \begin{enumerate}[label=(\alph*)]
    \item The proposed approach is not fully automatic. It requires a set of true statements $\mathfrak{S}$ for the group $G$ to be provided before attempting a proof. In the next subsection, we will discuss how to establish such statements using a finite model finder.
    \item The proposed approach is limited to establishing the non-orderability of groups. While many groups are orderable, proving it involves second-order reasoning that includes the quantifier ``there exists an order''. Alternatively, it could be handled by inductive reasoning. Both directions of automation appear to be promising areas for further research, but they are beyond the scope of this paper.
    \item An application of the automated reasoning to orderability can be found in \cite{wehrung2021}. In that work, finite model building was used to establish the orderability of finite monoids. However, this approach cannot be applied to show the left-orderability and bi-orderability of infinite groups.
    
\end{enumerate}

\subsection{Finite models as a source of true statements}

As we have already noticed, the proposed methodology is not complete and is not fully automated. The choice of a set of true statements (inequalities) $\mathfrak{S}$ in a group of interest remains a crucial and, generally, creative step. In applications one may use any known equationally expressible property of the group, such as non-commutativity.

We propose here a partial automation of the search for true statements using an automated reasoning technique, \emph{finite model finding}.  
For a $G$ presented by $\langle S|R\rangle$, a model of $\Gr \cup \Ax_R$ can be viewed as a quotient group of $G$.  
Hence any inequality $t_{1} \neq t_{2}$ among ground terms $t_{1}$ and $t_{2}$ which holds true in a model of $\Gr \cup \Ax_R$ also holds true in $G$.  

The proposed approach then works as follows: for a group presentation $\langle S|R\rangle$, we search for finite models of $\Gr \cup \Ax_R$ using an automated finite model finder tool, such as Mace4. If a finite model $G'$  is discovered, we can take any subset of ground inequalities true in $G'$ as a set of true statements $\mathfrak{S}$ in $G$. 

If a group of interest $G$ does not have nontrivial finite quotients, then the finite model finder will never find a useful model. Hence this approach is incomplete. Empirically, it has been effective in many, though not all, of our experiments.

\subsection{Examples}
Now we show how the proposed first principles approach works on some simple examples. 
\begin{example}
The fundamental group of Klein bottle has a presentation \[\langle a, b\; |\; a^{-1}ba = b^{-1}  \rangle.\] This group  is known to be left-orderable, but not bi-orderable \cite{boyer2005orderable}. One can prove that the group is not bi-orderable by noticing that one has to have both $b < e$ and $e < b$, which is impossible. This argument has to be complemented by a proof of the fact that $b\neq e$. If $b=e$ in the group, then $a$ becomes a generator. Thus it suffices to prove that the Klein bottle group is not cyclic.

If we want to apply automated reasoning, the corresponding theory $\Gr\cup \Ax_R\cup\AxL\cup \OrdB$ can be formulated as follows in the syntax of Prover9:
\begin{verbatim}
% Gr                            % Ax_R
(x * y) * z = x * (y * z).      (a' * b) * a = b'.
x * e = x.                        
e * x = x.                        
x' * x = e. 
x * x' = e.


% AxL                            % OrdB                            
- L(x,x).                        L(x,y) -> L((z*x)*u,(z*y)*u).         
L(x,y) & L(y,z) -> L(x,z).  
(x=y) | L(x,y) | L(y,x).
\end{verbatim} 
 One can notice that it is impossible to prove contradiction from such a theory because it is consistent and has a one-element model, namely, the trivial group. In order to get a contradiction, one needs to add some true statements in the group ($\mathfrak{S}$ in Proposition~\ref{prop:main}) to the theory. 
 
 In this example, $b\neq e$ is sufficient. To prove $b\neq e$ in the Klein bottle group automatically, we use the finite model building technique introduced in the previous subsection. When asked if the theory $\Gr \cup \Ax_{R} \cup \{b\neq e\}$ has a model (Task 1.1), the model builder Mace4 produces a model of size $2$; see Table~\ref{tab:summary2}. Therefore, the inequality $b\neq e$ is confirmed in the Klein bottle group. 
 
 We can use Prover9 to prove contradiction (Task 1.2) from the theory $\Gr\cup \Ax_R\cup\AxL\cup \OrdB\cup\{b\neq e\}$. Note that Prover9 rediscovers the human-authored proof given above, finding a contradiction after deriving both $L(b,e)$ and $L(e,b)$.
\end{example}

\begin{example}~\label{ex:sl2}
Special linear group $SL_{2}(\mathbb{Z})$ has a presentation
\[ \langle a,b \;|\; a^{4}=e, (ba)^{3} = b^{2}
\rangle\] and is known to be non-left-orderable because $a$ is a nontrivial torsion.

We prove this fact automatically. First we validate that both $a\neq e$ and $b\neq e$ hold true by finding models (Task 2.1) for the theory $\Gr \cup \Ax_R\cup \{a\neq e, b\neq e\}$ using Mace4. Then by adding $\{a\neq e, b\neq e\}$ to the theory $\Gr \cup \Ax_R\cup \AxL\cup \OrdL$, we derive contradiction (Task 2.2) using Prover9.
\end{example}

\begin{example}\label{ex:Fibonacci}
    The $n$-th Fibonacci group $F(2,n)$ ($n\ge 2$) has a presentation 
    \[\langle a_0,\ldots, a_{n-1} \;|\; a_i a_{i+1}= a_{i+2} \mbox{ for } i=0,\ldots, n-1\rangle.\]
    When $n$ is odd, this group contains a nontrivial torsion by \cite[Proposition 3.1]{bardakov2003generalization}, which implies the non-left-orderability. When $n$ is even, this group is the fundamental group of a cyclic branched cover of the figure-eight knot by \cite[Theorem 1]{hilden1992arithmeticity}, and is not left-orderable by \cite[Theorem 2]{dkabkowski2005non}.

    While our methods cannot automatically verify the non-left-orderability for every positive integer $n\ge 2$, we can apply them to some relatively large integers. These instances could serve as inspiration for mathematicians to establish non-left-orderability in general cases.

    Suppose that $n=12$. Define the set of inequalities $\mathfrak{S}$ by
    \[\mathfrak{S}:=\{a_i\neq e : i=0,\ldots,n-1\}.\]
    Using Mace4, we prove that $\Gr \cup \Ax_R\cup \mathfrak{S}$ is consistent (Task 3.1). Using Prover9, we prove that $\Gr \cup \Ax_R\cup \AxL \cup \OrdL \cup \mathfrak{S}$ is inconsistent (Task 3.2). Thus the Fibonacci group $F(2,12)$ is not left-orderable.

    When $n=11$, the inconsistency of $\Gr \cup \Ax_R\cup \AxL \cup \OrdL \cup \mathfrak{S}$ can still be verified (Task 3.3) using Prover9. However, it is difficult to find a finite model of $\Gr \cup \Ax_R\cup\mathfrak{S}$ using Mace4. To deal with the computational challenge, one approach is to manually deduce these inequalities based on the fact that $F(2,11)$ is infinite \cite{chalk1998fibonacci}.

\end{example}

\begin{example}\label{ex:B3}
    The braid group $B_3$ on three strands is isomorphic to the knot group of the trefoil knot $T_{2,3}$, and have a presentation \[\langle a,b\;|\; aba=bab \rangle.\]
    This group does not admit bi-invariant circular orders, according to \cite[Corollary 8.8]{ba2023knot}. An alternative proof follows from the left-orderability (thus they are torsion-free) and the non-bi-orderability of braid groups, as well as knot groups of nontrivial torus knots. According to \cite[Proposition 3]{zheleva1976cyclically}, a torsion-free group admits a bi-invariant circular order if and only if it is bi-orderable.

    To prove this fact automatically, we define the set of inequalities $\mathfrak{S}$ by
    \[\mathfrak{S}:=\{a\neq e,b\neq e,a \neq b)\}.\]
    Using Mace4, we prove that $\Gr \cup \Ax_R\cup \mathfrak{S}$ is consistent (Task 4.1). Using Prover9, we prove that $\Gr \cup \Ax_R\cup \AxC \cup \OrdCB \cup \mathfrak{S}$ is inconsistent (Task 4.2). Thus the group $B_3$ does not admit bi-invariant circular orders.
\end{example}

\begin{example} \label{ex:D7}
To illustrate how our approach can be applied to circular non-orderability, consider the dihedral group $D_{7}$ of order $14$ given by the presentation 
\[\langle a, b\; |\; a^{7} =e, b^{2} =e, bab=a^{-1} \rangle.\] This group is finite and non-cyclic, thus by \cite[Theorem 1]{zheleva1976cyclically}, it is not circularly orderable. 

We prove this fact directly using automated reasoning. Define the set of inequalities $\mathfrak{S}$ by \[\mathfrak{S}:= \{a\neq e, b\neq e, a\neq b\}.\] Using Mace4, we prove that $\Gr \cup \Ax_R\cup \mathfrak{S}$ is consistent (Task 5.1). Using Prover9, we prove that $\Gr \cup \Ax_R\cup \AxC \cup \OrdC \cup \mathfrak{S}$ is inconsistent (Task 5.2). Thus the group $D_{7}$ is not circularly orderable.

Note that this example already takes much longer to compute than the previous ones; see Table~\ref{tab:summary1}. To improve the applicability of our approach and handle more complicated examples, we present some methods in the subsequent section to deal with the computational challenge. 
\end{example} 

\section{Weakened theories and positive cones}
\subsection{Weakened theories}\label{subsec:weakened}
Establishing the non-orderability using automated theorem provers is inherently incomplete methodology. It may fail for various reasons:
\begin{enumerate}[label=(\alph*)]
    \item If the orderability status of a group is unknown, it may turned out to be orderable. In the case it is not possible to derive contradiction.
    \item The group may turned out to be non-orderable, but a supplied set of true statements is not sufficient to derive contradiction.
    \item The supplied set of true statements may be sufficient to derive a contradiction, but it takes too long to find a proof automatically.
\end{enumerate}
In practice, it is difficult to distinguish between these alternatives. Hence to improve utility of the methodology one needs to consider possible optimisations to improve efficiency of the proof search. 

One of possible optimisation is based on using weaker theory to derive contradictions. For example, in a typical derivation of contradiction in the`first principle approach the axiom
\begin{enumerate} [label=(\alph*)]
\item[(c)] $\forall x \forall y ((x=y) \lor (x < y)\lor(y<x))$ \hfill\emph{(connectedness)} 
\end{enumerate}
in $\AxL$ can be used with a supplied inequality  $t_{1} \neq t_{2}$ to derive $(t_{1} < t_{2}) \lor (t_{2} < t_{1}).$ 
So, the search space for proof can potentially be reduced by removing this axiom and replacing supplied inequality $t_{1} \neq t_{2}$ with $(t_{1} < t_{2}) \lor (t_{2} < t_{1}).$ In summary, we have the following proposition.

\begin{proposition} \label{prop:weak}
    Let $G$ be a group with presentation $\langle S|R\rangle$. Let $\mathfrak{S}$ be a set of true statements for $G$. Let $\mathfrak{S}^{<}$ be a set of formulas obtained by replacing all inequalities $t_{1} \neq t_{2}$ from $\mathfrak{S}$ by the corresponding formulas  $(t_{1} < t_{2}) \lor (t_{2} < t_{1})$. Let $\AxL'$ denote $\AxL$ minus the axiom of connectedness. Then the following statements hold:
    \begin{enumerate}[label=(\alph*)]
        \item If $\Gr \cup \Ax_R\cup \AxL'\cup \OrdL \cup \mathfrak{S}^<$ is inconsistent, then $G$ is not left-orderable.
        \item If $\Gr \cup \Ax_R\cup\AxL' \cup \OrdB \cup \mathfrak{S}^<$ is inconsistent, then $G$ is not bi-orderable.
    \end{enumerate}
\end{proposition}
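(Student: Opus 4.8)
The plan is to argue that any model of the weakened theory can be extended to a model of the original (full) theory of Proposition~\ref{prop:main}, so that consistency of the weakened theory implies consistency of the full theory, and hence inconsistency of the full theory is \emph{not} actually what we need — rather, we contrapose directly. Concretely, for part (a), I would assume that $G$ is left-orderable and derive that $\Gr \cup \Ax_R\cup \AxL'\cup \OrdL \cup \mathfrak{S}^<$ is consistent; the contrapositive of this gives exactly the claimed implication. So the real content is: if $G$ admits a left order $<$, then the structure $(G,\cdot,{}',e,<)$ together with the interpretations of the generator constants is a model of the weakened theory.

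The key steps are as follows. First I would observe that a left-orderable group $G$, equipped with a genuine left order $<$, is a model of $\Gr \cup \Ax_R \cup \AxL \cup \OrdL \cup \mathfrak{S}$ by exactly the reasoning already given for Proposition~\ref{prop:main}(a): the group axioms and relation axioms hold because $G$ is the presented group, the order axioms $\AxL$ and the left-invariance axiom $\OrdL$ hold because $<$ is a left order, and every statement in $\mathfrak{S}$ holds because $\mathfrak{S}$ consists of true statements for $G$. Second, I would note that $\AxL' \subseteq \AxL$, so irreflexivity and transitivity are inherited immediately. Third, and this is the only step requiring the weakening, I would check that $G$ with $<$ satisfies every formula in $\mathfrak{S}^<$: each such formula has the form $(t_1 < t_2) \lor (t_2 < t_1)$ arising from an inequality $t_1 \neq t_2$ in $\mathfrak{S}$. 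Since $t_1 \neq t_2$ holds in $G$ and $<$ is a \emph{linear} (connected) order, the two distinct elements $t_1,t_2$ must be comparable, so one of the two disjuncts holds. Thus $(G,<)$ is a model of $\Gr \cup \Ax_R\cup \AxL'\cup \OrdL \cup \mathfrak{S}^<$, establishing its consistency. Part (b) is identical, replacing $\OrdL$ by $\OrdB$ and using that a bi-order is in particular a linear order satisfying bi-invariance.

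There is no genuine obstacle here; the proposition is essentially a syntactic weakening observation. The one point that deserves care is precisely the passage from the inequality $t_1 \neq t_2$ to the disjunction $(t_1 < t_2)\lor(t_2<t_1)$: this uses connectedness of $<$ in the actual model $(G,<)$, even though connectedness has been dropped as an \emph{axiom}. The logical subtlety worth emphasising is that removing the connectedness axiom only \emph{enlarges} the class of potential models, so inconsistency of the weakened theory is a priori a stronger conclusion than inconsistency of the full theory; what makes the proposition correct is that the specific models we care about (genuinely ordered groups) continue to satisfy the replacement formulas $\mathfrak{S}^<$. I would therefore present the argument in contrapositive form, verifying model-membership clause by clause, and I expect the writeup to be short.
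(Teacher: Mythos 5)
Your proposal is correct and matches the paper's own (implicit) justification: the paper treats Proposition~\ref{prop:weak} exactly as it treats Proposition~\ref{prop:main}, namely by observing that a genuinely ordered group together with its order is a model of the weakened theory, the only new point being that connectedness of the actual order converts each inequality $t_1\neq t_2$ into the disjunction $(t_1<t_2)\lor(t_2<t_1)$. Your clause-by-clause contrapositive verification is precisely this argument, just written out in more detail than the paper bothers to.
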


One can derive similar results for circular orders and bi-invariant circular orders.

\begin{proposition} \label{weak-cbo}
Let $G$ be a group with presentation $\langle S|R\rangle$. Let $\mathfrak{S}$ be a set of true statements for $G$. Let $\mathfrak{S}^{C}$ be a set of formulas obtained by replacing each triple of inequalities $\{ t_{1} \neq t_{2}, t_{2} \neq t_{3}), t_{3} \neq t_{1}\}$ from $\mathfrak{S}$ by the corresponding formula  $C(t_1, t_2,t_3) \lor C(t_1,t_3,t_2)$. Let $\AxC'$ denote $\AxC$ minus the axiom of connectedness. Then the following statements hold:
    \begin{enumerate}[label=(\alph*)]
        \item If $\Gr \cup \Ax_R\cup \AxC'\cup \OrdC \cup \mathfrak{S}^C$ is inconsistent, then $G$ is not circularly orderable.
        \item If $\Gr \cup \Ax_R\cup\AxC' \cup \OrdCB \cup \mathfrak{S}^C$ is inconsistent, then $G$ does not admit a bi-invariant circular order.
    \end{enumerate}
\end{proposition}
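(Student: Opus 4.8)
The plan is to argue by contraposition and to reuse the ordered group itself as a witness to consistency, exactly as in the proof of Proposition~\ref{prop:weak}. For part (a), I would assume that $G$ is circularly orderable and fix a left-invariant cyclic order $C$ on $G$. Interpreting each generator in $S$ as the corresponding element of $G$, the pair $(G,C)$ is a model of the full theory $\Gr \cup \Ax_R \cup \AxC \cup \OrdC \cup \mathfrak{S}$: the axioms $\Gr$ and $\Ax_R$ hold because $G$ is presented by $\langle S|R\rangle$, the cyclic-order axioms $\AxC$ and the left-invariance axiom $\OrdC$ hold by the choice of $C$, and $\mathfrak{S}$ holds because it consists of true statements for $G$. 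The goal is then to upgrade $(G,C)$ to a model of the weakened theory $\Gr \cup \Ax_R \cup \AxC' \cup \OrdC \cup \mathfrak{S}^C$; this shows the weakened theory is consistent and hence not inconsistent, which is the contrapositive of (a).

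The only step requiring verification is that $\mathfrak{S}^C$ is satisfied in $(G,C)$. Since $\AxC' \subseteq \AxC$, the axioms $\AxC'$ already hold, and any inequalities of $\mathfrak{S}$ retained in $\mathfrak{S}^C$ hold because they are true statements for $G$; the only genuinely new formulas are the disjunctions $C(t_1,t_2,t_3) \lor C(t_1,t_3,t_2)$ that replace each triple $\{t_1 \neq t_2, t_2 \neq t_3, t_3 \neq t_1\} \subseteq \mathfrak{S}$. Here I would instantiate the connectedness axiom (d) of $\AxC$ at $x = t_1$, $y = t_2$, $z = t_3$, obtaining
\[(t_1 = t_2) \lor (t_2 = t_3) \lor (t_3 = t_1) \lor C(t_1,t_2,t_3) \lor C(t_1,t_3,t_2).\]
Because all three inequalities $t_1 \neq t_2$, $t_2 \neq t_3$, $t_3 \neq t_1$ hold in $G$, the first three disjuncts are false, leaving exactly $C(t_1,t_2,t_3) \lor C(t_1,t_3,t_2)$, which is the corresponding member of $\mathfrak{S}^C$. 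Thus $(G,C)$ models the weakened theory, and part (a) follows.

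For part (b), I would run the identical argument with a bi-invariant circular order $C$; the only change is that $(G,C)$ now satisfies $\OrdCB$ in place of $\OrdC$, so it is a model of $\Gr \cup \Ax_R \cup \AxC' \cup \OrdCB \cup \mathfrak{S}^C$. I do not anticipate a genuine obstacle, since the proposition is a soundness-style statement whose content is that weakening an axiom set preserves satisfiability. The only point demanding care is the bookkeeping in the previous paragraph: eliminating all three equality disjuncts of cyclic connectedness requires all three inequalities of the triple to lie in $\mathfrak{S}$. This is precisely why $\mathfrak{S}^C$ is defined by replacing triples of inequalities rather than single ones, in contrast to the linear case of Proposition~\ref{prop:weak}, where a single inequality suffices to discharge the lone equality disjunct of linear connectedness.
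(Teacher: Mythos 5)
Your proof is correct and follows exactly the reasoning the paper intends: the paper states Proposition~\ref{weak-cbo} without a separate proof, as an analogue of Proposition~\ref{prop:weak}, whose justification is precisely your contrapositive argument that an ordered group $(G,C)$ models the full theory and that connectedness plus the three inequalities of each triple yields the disjunction $C(t_1,t_2,t_3)\lor C(t_1,t_3,t_2)$, so $(G,C)$ also models the weakened theory. Your closing remark about why triples (rather than single inequalities) are needed to discharge the three equality disjuncts is a nice articulation of the one point the paper leaves implicit.
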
 

To improve the efficiency further, we can strengthen the assumption $(t_{1} < t_{2}) \lor (t_{2} < t_{1})$ in $\mathfrak{S}^<$ to $t_{1} < t_{2}$, or strengthen the assumption $C(t_1, t_2,t_3) \lor C(t_1,t_3,t_2)$ in $\mathfrak{S}^C$ to $C(t_1, t_2,t_3)$. By imposing stronger assumptions, we are testing the existence of an order satisfying extra formulas. A contradiction of a strengthened theory leads to a partial result on non-orderability. However, the first strengthening is free by symmetry.

\begin{proposition} \label{symmetry}
If we strengthen one formula of form $(t_{1} < t_{2}) \lor (t_{2} < t_{1})$ in $\mathfrak{S}^<$ to $t_{1} < t_{2}$, or of form $C(t_1, t_2,t_3) \lor C(t_1,t_3,t_2)$ in $\mathfrak{S}^C$ to $C(t_1, t_2,t_3)$, then the conclusions in Proposition~\ref{prop:weak} and Proposition~\ref{weak-cbo} still hold.
\end{proposition}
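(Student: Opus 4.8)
The plan is to argue by contraposition and to exploit an orientation-reversing symmetry enjoyed by each class of orders. In each case I would assume that $G$ is orderable of the relevant type, take a witnessing order, and show that either that order or a canonically reversed version of it yields a model of the strengthened theory. Since the existence of a model forces consistency, this establishes the contrapositive of the desired conclusion, namely that inconsistency of the strengthened theory implies non-orderability.

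For the linear cases underlying Proposition~\ref{prop:weak}, the relevant symmetry is order reversal: given a left order $<$ on $G$, define $x <' y$ to mean $y < x$. First I would verify that $<'$ is again a left order, i.e.\ that it satisfies irreflexivity, transitivity, and the left-invariance axiom $\OrdL$ (and, in the bi-order case, the bi-invariance axiom $\OrdB$); each check is immediate from the corresponding property of $<$. The key observation is that every disjunctive formula $(s_1 < s_2) \lor (s_2 < s_1)$ appearing in $\mathfrak{S}^<$ continues to hold when $<$ is reinterpreted as $<'$, because $(G,<')$ is still a genuine ordered group in which the original inequalities of $G$ remain true and the order is total. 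By contrast, the single strengthened atom $t_1 < t_2$ is interchanged with $t_2 < t_1$ under reversal. Since $t_1$ and $t_2$ are distinct in $G$, the total order $<$ satisfies exactly one of $t_1 < t_2$ and $t_2 < t_1$; choosing the model $(G,<)$ in the first case and $(G,<')$ in the second produces a model of $\Gr \cup \Ax_R \cup \AxL' \cup \OrdL \cup \mathfrak{S}^<_{*}$ (respectively with $\OrdB$), where $\mathfrak{S}^<_{*}$ denotes the set obtained after the strengthening. Hence that theory is consistent, which is what we want.

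For the circular cases underlying Proposition~\ref{weak-cbo}, I would replace order reversal by orientation reversal: given a circular order $C$, define $C'(x,y,z)$ to mean $C(x,z,y)$. The analogous verification is that $C'$ satisfies the axioms $\AxC'$ together with $\OrdC$ (resp.\ $\OrdCB$); cyclicity, irreflexivity, and the invariance axioms follow directly, and transitivity of $C'$ is exactly transitivity of $C$ applied to the appropriately permuted triple. As before, each disjunction $C(t_1,t_2,t_3) \lor C(t_1,t_3,t_2)$ in $\mathfrak{S}^C$ still holds under $C \mapsto C'$, while the strengthened atom $C(t_1,t_2,t_3)$ is swapped with $C(t_1,t_3,t_2)$. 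Because $t_1, t_2, t_3$ are pairwise distinct, connectedness of the full circular order witnessing orderability forces exactly one of the two orientations, so precisely one of $(G,C)$ or $(G,C')$ models the strengthened theory, again giving consistency.

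The content here is mostly bookkeeping rather than conceptual. I expect the one point to check with care is that orientation reversal genuinely preserves the ternary transitivity axiom and the (bi-)invariance axioms for $C$, since this is a routine but easy-to-misindex substitution. I would also make explicit why the result is confined to strengthening a \emph{single} formula: reversal fixes the direction (or orientation) globally, so it can align at most one previously symmetric disjunction with its chosen disjunct. This is precisely the remark that the first strengthening is free by symmetry, and it simultaneously explains why strengthening a second formula can in general fail to preserve consistency.
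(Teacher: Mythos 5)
Your proposal is correct and follows essentially the same route as the paper: the paper likewise passes to the opposite order $x<_{op}y \Leftrightarrow y<x$ (resp.\ the reversed cyclic order $C_{op}(x,y,z)\Leftrightarrow C(x,z,y)$), observes that all axioms are invariant under this reversal, and concludes that either the original or the reversed order satisfies the single strengthened formula. Your additional checks (distinctness of the terms, and the remark on why only one formula can be strengthened) are correct elaborations of what the paper leaves implicit.
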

\begin{proof}
    For each linear order $<$, we say $x<_{op}y$ if and only if $y<x$. Then all axioms in $\AxL$, $\OrdL$, and $\OrdB$ hold invariant when replacing $<$ with $<_{op}$. Thus if $<$ is a left order (resp., a bi-order), then $<_{op}$ is also a left order (resp., a bi-order). Either $<$ or $<_{op}$ satisfies the formula $t_1<t_2$.

    For a cyclic order $C$, we say $C_{op}(x,y,z)$ if and only if $C(x,z,y)$. Then all axioms in $\AxC$, $\OrdC$, and $\OrdCB$ hold invariant when replacing $C$ with $C_{op}$. Thus if $<$ is a circular order (resp., a bi-invariant circular order), then $<_{op}$ is also a circular order (resp., a bi-invariant circular order). Either $C$ or $C_{op}$ satisfies the formula $C(t_1, t_2,t_3)$.
\end{proof}

\subsection{Positive cones} \label{subsec:positive_cone}
In this subsection, we incorporate a well-known technique in the theory of ordered groups into our methodology: the positive cone technique. A positive cone of an order $<$ on the group $G$ is defined as the set of all positive elements. In other words, it is the set $\{x\in G: e < x\}$. This concept is particularly useful for left orders and bi-orders, as the positive cone determines the left order or the bi-order. According to the left-invariance, we can show that $x<y$ if and only if $x^{-1}y$ is in the positive cone. By translating the axioms for orders to the corresponding axioms for positive cones, we can gain a computational advantage, as it reduces a binary predicate to a unary one.

The positive cone of a left order or a bi-order satisfies the following axioms, which we denote by $\AxPC$:
\begin{enumerate} [label=(\alph*)]
\item $\neg P(e)$,  \hfill\emph{(irreflexivity)} 
\item $\forall x \forall y (P(x) \land  P(y) \to  P(x\cdot y)$), \hfill\emph{(closure)} 
\item $\forall x ((x=e) \lor P(x)\lor P(x'))$. \hfill\emph{(connectedness)} 
\end{enumerate}

Additionally, the positive cone of a bi-order satisfies the conjugacy invariance axiom $\PB$:
\begin{enumerate} [label=(\alph*)]
\item[(d)] $\forall x \forall y (P(x) \to P((y\cdot x)\cdot y' ) )$. \hfill\emph{(conjugacy invariance)} 
\end{enumerate}

We can generalise the positive cone method to circular orders. We define the positive cone of a circular order $C$ on the group $G$ as the set $\{(x,y)\in G: C(e,x,y)\}$. Because $C(x,y,z)$ if and only if $(x^{-1}y,x^{-1}z)$ is in the positive cone, the positive cone determines the circular order. Thus we can translate the axioms for circular orders and bi-invariant circular orders to the axioms for their positive cones as follows.

The positive cone of a circular order satisfies the following axioms, which we denote by $\AxPCL$:
\begin{enumerate} [label=(\alph*)]
\item $\forall x \forall y (P(x,y) \to  P(x'\cdot y,x'))$, \hfill\emph{(cyclicity)} 
\item $\forall x (\neg P(x, x))$,  \hfill\emph{(irreflexivity)} 
\item $\forall x \forall y\forall z (P(x,y)\land P(y,z) \to P(x,z))$,  \hfill\emph{(transitivity)} 
\item $\forall x \forall y ((e=x)\lor(e=y)\lor(x=y)\lor P(x,y)\lor P(y,x)) $. \hfill\emph{(connectedness)} 
\end{enumerate}

Additionally, the positive cone of a bi-invariant circular order satisfies the conjugacy invariance axiom $\PCB$:
\begin{enumerate} [label=(\alph*)]
\item[(e)] $\forall x \forall y\forall z (P(x,y) \to P((z \cdot x)\cdot z' ),(z \cdot y)\cdot z' ) $. \hfill\emph{(conjugacy invariance)} 
\end{enumerate}

We encourage the readers to verify the following lemmas.

\begin{lemma}\label{lem:1}
    Assume the axioms in $\Gr$ holds for binary function $\cdot$, unary function $'$ and constant $e$. Let $<$ be a binary predicate satisfying $\OrdL$ or $\OrdB$. Define a unary predicate $P$ by $P(x)$ if and only if $e<x$. Then
    \begin{enumerate}[label=(\alph*)]
        \item the irreflexivity axiom in $\AxL$ implies the irreflexivity axiom in $\AxPC$;
        \item the transitivity axiom in $\AxL$ implies the closure axiom in $\AxPC$;
        \item the connectedness axiom in $\AxL$ implies the connectedness axiom in $\AxPC$;
        \item the axiom $\OrdB$ implies the axiom $\PB$.
    \end{enumerate} 
\end{lemma}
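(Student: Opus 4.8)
The plan is to verify Lemma~\ref{lem:1} by directly translating each order-theoretic axiom into its positive-cone counterpart, using the defining equivalence $P(x) \Leftrightarrow (e < x)$ together with the left-invariance axiom in $\OrdL$ (which is also a consequence of $\OrdB$). Throughout, left-invariance lets us shift any statement $u < v$ to $e < u' \cdot v$, and conversely; this is the computational engine behind every part.

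For part (a), I would argue contrapositively: if $P(e)$ held, then by definition $e < e$, contradicting the irreflexivity axiom in $\AxL$. For part (b), assume $P(x)$ and $P(y)$, that is, $e < x$ and $e < y$. Applying left-invariance to $e < y$ with multiplier $x$ gives $x \cdot e < x \cdot y$, and using the identity axiom in $\Gr$ this is $x < x \cdot y$. Combining $e < x$ and $x < x \cdot y$ by transitivity in $\AxL$ yields $e < x \cdot y$, i.e.\ $P(x \cdot y)$, which is the closure axiom. For part (c), take any $x$; the connectedness axiom in $\AxL$ applied to the pair $(e, x)$ gives $(e = x) \lor (e < x) \lor (x < e)$. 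The middle disjunct is $P(x)$ directly. For the last disjunct $x < e$, I would apply left-invariance with multiplier $x'$ to obtain $x' \cdot x < x' \cdot e$, which simplifies via $\Gr$ to $e < x'$, that is $P(x')$. This produces exactly $(x = e) \lor P(x) \lor P(x')$, the connectedness axiom in $\AxPC$.

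For part (d), I would assume $\OrdB$ holds and derive the conjugacy invariance axiom $\PB$. Suppose $P(x)$, so $e < x$. The bi-invariance axiom $\OrdB$ applied to $e < x$ with left multiplier $z := y$ and right multiplier $u := y'$ gives $(y \cdot e) \cdot y' < (y \cdot x) \cdot y'$. Simplifying the left side using the identity and inverse axioms in $\Gr$ reduces $(y \cdot e) \cdot y'$ to $y \cdot y' = e$, so we obtain $e < (y \cdot x) \cdot y'$, which is precisely $P((y \cdot x) \cdot y')$.

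None of the steps presents a genuine obstacle; each is a short syntactic manipulation using associativity, identity, and inverse from $\Gr$ plus a single application of the relevant invariance axiom. The only point requiring care is the bookkeeping of the simplifications $x \cdot e = x$, $x' \cdot x = e$, and $x \cdot x' = e$ — in particular in part (d), one must track that $(y \cdot e) \cdot y'$ collapses to $e$ using both the identity axiom and an inverse axiom, with associativity implicit. Since the lemma merely asks for these routine verifications (indeed the authors invite the reader to check them), I expect the write-up to consist of four brief paragraphs mirroring parts (a)--(d) as above.
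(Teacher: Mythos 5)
Your proof is correct; the paper gives no proof of this lemma at all (it is explicitly left to the reader), and your four verifications are exactly the intended routine ones. In particular, you correctly handle the one subtle point — that when only $\OrdB$ is assumed, left-invariance must first be recovered (by taking the right multiplier to be $e$) before it can be used in parts (b) and (c).
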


\begin{lemma}\label{lem:2}
    Assume the axioms in $\Gr$ holds for binary function $\cdot$, unary function $'$ and constant $e$. Let $P$ be a unary predicate. Define a binary predicate $<$ by $x<y$ if and only if $P(x' \cdot y)$. Then 
    \begin{enumerate}[label=(\alph*)]
        \item the axiom $\OrdL$ holds;
        \item the irreflexivity axiom in $\AxPC$ implies the irreflexivity axiom in $\AxL$;
        \item the closure axiom in $\AxPC$ implies the transitivity axiom in $\AxL$;
        \item the connectedness axiom in $\AxPC$ implies the connectedness axiom in $\AxL$;
        \item the axiom $\PB$ implies the axiom $\OrdB$.
    \end{enumerate}
\end{lemma}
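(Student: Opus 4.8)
The plan is to verify all five claims by unfolding the definition $x<y \Leftrightarrow P(x'\cdot y)$ and reducing each order-theoretic statement to a statement about $P$ via elementary group identities. The two identities doing essentially all the work are $(x\cdot y)'=y'\cdot x'$ together with the telescoping cancellations $x'\cdot z'\cdot z\cdot y=x'\cdot y$ and $(x'\cdot y)\cdot(y'\cdot z)=x'\cdot z$, all provable from $\Gr$ alone. Once these rewrites are in hand, each implication becomes a one-line match between the hypothesis and the conclusion.

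First I would dispatch the three parts (a), (b), (c), which need no hypothesis beyond the indicated $\AxPC$ axiom. For (a), computing $(z\cdot x)'\cdot(z\cdot y)=x'\cdot z'\cdot z\cdot y=x'\cdot y$ shows that $z\cdot x<z\cdot y$ and $x<y$ are \emph{the same} instance of $P$, so left-invariance $\OrdL$ holds unconditionally. For (b), irreflexivity $\neg(x<x)$ unfolds to $\neg P(x'\cdot x)=\neg P(e)$, which is literally the irreflexivity axiom of $\AxPC$. For (c), from $P(x'\cdot y)$ and $P(y'\cdot z)$ the closure axiom yields $P((x'\cdot y)\cdot(y'\cdot z))=P(x'\cdot z)$, i.e. $x<z$, giving transitivity.

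Next I would treat connectedness (d). Here I would instantiate the $\AxPC$ connectedness axiom at the single element $x'\cdot y$, obtaining $(x'\cdot y=e)\lor P(x'\cdot y)\lor P((x'\cdot y)')$. The first disjunct is equivalent to $x=y$ (left-multiply by $x$ and cancel), the second is $x<y$ by definition, and the third rewrites via $(x'\cdot y)'=y'\cdot x$ to $P(y'\cdot x)$, which is $y<x$. This reproduces exactly the trichotomy in the connectedness axiom of $\AxL$.

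The only part requiring a small idea is (e), and I expect it to be the main, albeit mild, obstacle. Bi-invariance $\OrdB$ is the conjunction of left-invariance, already secured in (a), with right-invariance $a<b\to a\cdot u<b\cdot u$. To obtain the latter I would set $w=a'\cdot b$, so that $a<b$ means $P(w)$, and note that $a\cdot u<b\cdot u$ unfolds to $P((a\cdot u)'\cdot(b\cdot u))=P(u'\cdot w\cdot u)$. Applying the conjugacy-invariance axiom $\PB$ with the substitution $y:=u'$ gives $P((u'\cdot w)\cdot(u')')=P(u'\cdot w\cdot u)$ since $(u')'=u$, which is precisely what is needed. Chaining left-invariance ($x<y\Rightarrow z\cdot x<z\cdot y$) with this right-invariance ($z\cdot x<z\cdot y\Rightarrow(z\cdot x)\cdot u<(z\cdot y)\cdot u$) then establishes $\OrdB$.
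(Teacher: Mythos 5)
Your proposal is correct: each part follows by unfolding $x<y \Leftrightarrow P(x'\cdot y)$ and applying the group identities $(x\cdot y)'=y'\cdot x'$, $(x')'=x$, and the telescoping cancellations, and your handling of (e) via $\PB$ with $y:=u'$ composed with the unconditional left-invariance from (a) is exactly right. The paper offers no proof of this lemma (it is explicitly left to the reader), and your verification is precisely the routine argument the authors intend.
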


\begin{lemma}\label{lem:3}
     Assume the axioms in $\Gr$ holds for binary function $\cdot$, unary function $'$ and constant $e$. Let $C$ be a ternary predicate satisfying $\OrdC$ or $\OrdCB$. Define a binary predicate $P$ by $P(x,y)$ if and only if $C(e,x,y)$. Then
    \begin{enumerate}[label=(\alph*)]
        \item the cyclicity axiom in $\AxC$ implies the cyclicity axiom in $\AxPCL$;
        \item the irreflexivity axiom in $\AxC$ implies the irreflexivity axiom in $\AxPCL$;
        \item the transitivity axiom in $\AxC$ implies the transitivity axiom in $\AxPCL$;
        \item the connectedness axiom in $\AxC$ implies the connectedness axiom in $\AxPCL$;
        \item the axiom $\OrdCB$ implies the axiom $\PCB$.
    \end{enumerate} 
\end{lemma}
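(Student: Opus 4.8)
The plan is to verify each of the five implications (a)--(e) by direct computation, using the defining translation $P(x,y) \Leftrightarrow C(e,x,y)$ together with the group axioms $\Gr$ and the assumed circular-order axioms. This is a routine verification lemma, so the strategy is to translate each target axiom about $P$ back into a statement about $C$ and show it follows from the corresponding axiom (or axioms) about $C$. Since Lemma~\ref{lem:1} and Lemma~\ref{lem:2} handle the analogous linear case, I would expect the main work here to lie in the genuinely ternary manipulations, especially cyclicity and left-invariance.

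For part (b), irreflexivity, I would note that $\neg P(x,x)$ unfolds to $\neg C(e,x,x)$, which is immediate from the irreflexivity axiom $\forall x \forall y\, \neg C(x,y,y)$ in $\AxC$ instantiated with the pair $(e,x)$. For part (c), transitivity, the premises $P(x,y)$ and $P(y,z)$ give $C(e,x,y)$ and $C(e,y,z)$; to land the transitivity axiom of $\AxC$, which has the shape $C(w,u,v)\land C(w,v,s)\to C(w,u,s)$ with a fixed first argument, I would first rewrite $C(e,y,z)$ into the form $C(e,x,\cdot)$ by combining cyclicity with the available data. Concretely, I expect to apply cyclicity to rotate arguments and then invoke $\AxC$-transitivity with first argument $e$ to conclude $C(e,x,z)$, i.e.\ $P(x,z)$.

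The subtle step is part (a), cyclicity, where the target $\AxPCL$ axiom reads $P(x,y)\to P(x'\cdot y,\,x')$, i.e.\ $C(e,x,y)\to C(e,\,x'\cdot y,\,x')$. Starting from $C(e,x,y)$, I would apply the $\AxC$-cyclicity axiom to rotate to $C(x,y,e)$, then use the left-invariance axiom $\OrdC$ with the multiplier $u=x'$ to obtain $C(x'\cdot x,\,x'\cdot y,\,x'\cdot e)$, which simplifies via $\Gr$ to $C(e,\,x'\cdot y,\,x')$; this is exactly $P(x'\cdot y,x')$. This is the main obstacle because it is the only place where $\OrdC$ (rather than $\AxC$ alone) is essential, and the bookkeeping between the group simplifications $x'\cdot x=e$, $x'\cdot e=x'$ and the rotation of $C$-arguments must be done in the right order.

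For part (d), connectedness, I would instantiate the $\AxC$-connectedness disjunction $\forall x \forall y\forall z\,(x=y)\lor(y=z)\lor(z=x)\lor C(x,y,z)\lor C(x,z,y)$ at the triple $(e,x,y)$, yielding $(e=x)\lor(x=y)\lor(y=e)\lor C(e,x,y)\lor C(e,y,x)$, which is precisely the $\AxPCL$-connectedness disjunction $(e=x)\lor(e=y)\lor(x=y)\lor P(x,y)\lor P(y,x)$ after reading off $P$. Finally, for part (e) I would assume $\OrdCB$ and derive $\PCB$: from $P(x,y)$, i.e.\ $C(e,x,y)$, the bi-invariance axiom $\OrdCB$ with multipliers $u=z$, $v=z'$ gives $C((z\cdot e)\cdot z',\,(z\cdot x)\cdot z',\,(z\cdot y)\cdot z')$, and simplifying $(z\cdot e)\cdot z' = z\cdot z' = e$ via $\Gr$ yields $C(e,\,(z\cdot x)\cdot z',\,(z\cdot y)\cdot z')$, which is $P((z\cdot x)\cdot z',\,(z\cdot y)\cdot z')$ as required.
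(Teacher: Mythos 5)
Your proof is correct: the paper gives no proof of this lemma (it is explicitly left to the reader), and your direct verification — in particular the key step in (a), rotating $C(e,x,y)$ to $C(x,y,e)$ via $\AxC$-cyclicity and then left-multiplying by $x'$ to get $C(e,\,x'\cdot y,\,x')$ — is exactly the intended routine argument. Two minor points: in (c) no rotation by cyclicity is needed at all, since $C(e,x,y)\land C(e,y,z)\to C(e,x,z)$ is literally the $\AxC$-transitivity axiom instantiated with first argument $e$; and in the case where only $\OrdCB$ (rather than $\OrdC$) is assumed, the left-invariance you invoke in (a) must first be recovered from bi-invariance by taking $v=e$ and simplifying with $\Gr$.
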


\begin{lemma}\label{lem:4}
     Assume the axioms in $\Gr$ holds for binary function $\cdot$, unary function $'$ and constant $e$. Let $P$ be a binary predicate. Define a ternary predicate $C$ by $C(x,y,z)$ if and only if $P(x'\cdot y,x'\cdot z)$. Then
    \begin{enumerate}[label=(\alph*)]
        \item the axiom $\OrdC$ holds;
        \item the cyclicity axiom in $\AxPCL$ implies the cyclicity axiom in $\AxC$;
        \item the irreflexivity axiom in $\AxPCL$ implies the irreflexivity axiom in $\AxC$;
        \item the transitivity axiom in $\AxPCL$ implies the transitivity axiom in $\AxC$;
        \item the connectedness axiom in $\AxPCL$ implies the connectedness axiom in $\AxC$;
        \item the axiom $\PCB$ implies the axiom $\OrdCB$.
    \end{enumerate} 
\end{lemma}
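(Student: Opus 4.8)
The plan is to treat the defining equivalence $C(x,y,z)\iff P(x'\cdot y,\,x'\cdot z)$ as a ``based at the identity'' encoding and to verify each clause by a direct computation in the group, invoking only the standard consequences of $\Gr$: associativity, $(u\cdot x)'=x'\cdot u'$, $(x')'=x$, the left/right cancellation laws, and $x'\cdot x=x\cdot x'=e$. Throughout I abbreviate $u\cdot x$ by $ux$. The recurring mechanism is that a product of the form $(ux)'\cdot(uy)$ collapses to $x'\cdot y$, because $(ux)'=x'u'$ and $u'u=e$.

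For part (a) this mechanism does all the work at once: $C(ux,uy,uz)\iff P\bigl((ux)'(uy),(ux)'(uz)\bigr)\iff P(x'y,x'z)\iff C(x,y,z)$, so the left-invariance axiom $\OrdC$ holds unconditionally and needs no hypothesis on $P$. For parts (b)--(d) I would use a single substitution $a=x'y$, $b=x'z$ (and $c=x'u$ for transitivity). Cyclicity uses $a'=(x'y)'=y'x$ and $a'\cdot b=(y'x)(x'z)=y'z$, so the $\AxPCL$-cyclicity instance $P(a,b)\to P(a'b,a')$ becomes precisely $C(x,y,z)\to C(y,z,x)$. Irreflexivity is immediate, since $C(x,y,y)=P(x'y,x'y)$ is ruled out by $\neg P(a,a)$, and transitivity is a verbatim instance of $\AxPCL$-transitivity on $a,b,c$.

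Part (e) is the clause that needs the most care, because the degenerate disjuncts must be matched correctly. Instantiating $\AxPCL$-connectedness at $a=x'y$, $b=x'z$ produces $(e=a)\lor(e=b)\lor(a=b)\lor P(a,b)\lor P(b,a)$; the cancellation laws then rewrite $e=x'y$, $e=x'z$ and $x'y=x'z$ as $x=y$, $z=x$ and $y=z$ respectively, while $P(a,b)$ and $P(b,a)$ are $C(x,y,z)$ and $C(x,z,y)$, so the disjunction is exactly $\AxC$-connectedness.

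Finally, for part (f) the same collapsing identity gives $((ux)v)'\cdot((uy)v)=v'(x'y)v$ and likewise with $z$: the left factor $u$ cancels and right multiplication by $v$ turns into conjugation by $v'$. Thus $C\bigl((ux)v,(uy)v,(uz)v\bigr)\iff P\bigl(v'(x'y)v,\,v'(x'z)v\bigr)$, and applying $\PCB$ with the conjugating element instantiated to $v'$ (using $(v')'=v$) to $P(x'y,x'z)=C(x,y,z)$ yields $\OrdCB$. I expect no real obstacle here, as every step is routine; the only things to watch are the bookkeeping with inverses of products and choosing the conjugator to be $v'$ rather than $v$ in the last step.
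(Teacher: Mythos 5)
Your verification is correct in every clause: the collapsing identity $(u\cdot x)'\cdot(u\cdot y)=x'\cdot y$ gives (a) outright, the substitutions $a=x'\cdot y$, $b=x'\cdot z$, $c=x'\cdot u$ turn each $\AxPCL$ axiom into exactly the corresponding $\AxC$ axiom (including the correct matching of the degenerate disjuncts in connectedness), and instantiating the conjugator in $\PCB$ as $v'$ is precisely the right move for (f). The paper gives no proof of this lemma --- it explicitly leaves the verification to the reader --- and your direct computation is exactly the routine check the authors intend.
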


By replacing $\AxL\cup\OrdL$ with $\AxPC$, replacing $\AxL\cup\OrdB$ with $\AxPC\cup \PB$, replacing $\AxC\cup \OrdC$ with $\AxPCL$, and replacing $\AxC\cup \OrdCB$ with $\AxPCL\cup \PCB$, we can translate Proposition~\ref{prop:main}, Proposition~\ref{prop:weak}, Proposition~\ref{weak-cbo}, and Proposition~\ref{symmetry} into positive cone forms. We prove that these positive cone translations lead to \emph{equiconsistent} theories.

\begin{proposition}\label{prop:positive_cone}
    Let $s_i$, $t_i$ $(i=0,1,\ldots,k)$ be ground terms. Let $\mathfrak{S}$ denote the set of inequalities \[\{s_i\neq t_i: i=0,1,\ldots,k\}.\] Let $\mathfrak{S}^<$ denote the set of axioms \[\{s_0< t_0\}\cup\{(s_i<t_i)\lor (t_i<s_i): i=1,\ldots, k\}.\] Let $\mathfrak{S}^P$ denote the set of axioms \[\{P(s_0' \cdot t_0)\}\cup\{P(s_i'\cdot t_i)\lor P(t_i'\cdot s_i): i=1,\ldots, k\}.\]
    Let $\AxPC'$ (resp. $\AxL'$) denote $\AxPC$ (resp. $\AxL$) minus the axiom of connectedness. Then     
    \begin{enumerate}[label=(\alph*)]
        \item $\Gr \cup \Ax_R\cup \AxL\cup \OrdL \cup \mathfrak{S}$ is consistent if and only if $\Gr\cup \Ax_R\cup \AxPC\cup \mathfrak{S}$ is consistent;
        \item $\Gr \cup \Ax_R\cup\AxL \cup \OrdB \cup \mathfrak{S}$ is consistent if and only if $\Gr\cup \Ax_R\cup \AxPC\cup \PB\cup \mathfrak{S}$ is consistent;
        \item $\Gr \cup \Ax_R\cup \AxL'\cup \OrdL \cup \mathfrak{S}^<$ is consistent if and only if $\Gr\cup \Ax_R\cup \AxPC'\cup \mathfrak{S}^P$ is consistent;
        \item $\Gr \cup \Ax_R\cup\AxL' \cup \OrdB \cup \mathfrak{S}^<$ is consistent if and only if $\Gr\cup \Ax_R\cup \AxPC'\cup \PB\cup \mathfrak{S}^P$ is consistent.
    \end{enumerate}
\end{proposition}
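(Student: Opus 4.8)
The plan is to establish each of the four equivalences by exhibiting, in both directions, a model transformation that leaves the group reduct $\Gr\cup\Ax_R$ and the supplied true statements fixed while converting the order axioms into the positive cone axioms and back. The two transformations are precisely those of Lemma~\ref{lem:1} and Lemma~\ref{lem:2}: from an order model one defines the unary predicate by $P(x)\Leftrightarrow e<x$, and from a positive cone model one defines the binary predicate by $x<y\Leftrightarrow P(x'\cdot y)$. Since all the axioms involved are first order, consistency of a theory is equivalent to the existence of a model, so it suffices to turn a model of one side into a model of the other; in each direction we may freely forget the predicate absent from the target vocabulary and pass to the appropriate reduct.

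For the forward directions I would start with a model of the left-hand theory. Parts (a) and (b) assume all of $\AxL$; defining $P$ as above and invoking Lemma~\ref{lem:1}(a)--(c) yields $\AxPC$, and in part (b) additionally Lemma~\ref{lem:1}(d) converts $\OrdB$ into $\PB$. Here $\mathfrak{S}$ consists of the inequalities $s_i\neq t_i$, which live in the pure group language and are therefore preserved verbatim. Parts (c) and (d) drop connectedness, so only Lemma~\ref{lem:1}(a) and (b) (together with (d) in part (d)) are used, giving $\AxPC'$ (plus $\PB$). The extra work is to check that $\mathfrak{S}^<$ becomes $\mathfrak{S}^P$: using left-invariance $\OrdL$ one has $s<t\Leftrightarrow e<s'\cdot t\Leftrightarrow P(s'\cdot t)$, whence each disjunct $s_i<t_i$ (resp. $t_i<s_i$) translates to $P(s_i'\cdot t_i)$ (resp. $P(t_i'\cdot s_i)$), and likewise $s_0<t_0$ to $P(s_0'\cdot t_0)$.

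For the backward directions I would start with a model of the right-hand theory and define $<$ from $P$ by $x<y\Leftrightarrow P(x'\cdot y)$. Lemma~\ref{lem:2}(a) supplies $\OrdL$ unconditionally, Lemma~\ref{lem:2}(b)--(d) convert $\AxPC$ into $\AxL$ in parts (a) and (b) and $\AxPC'$ into $\AxL'$ in parts (c) and (d), and Lemma~\ref{lem:2}(e) converts $\PB$ into $\OrdB$ in parts (b) and (d). The group reduct and the inequalities of $\mathfrak{S}$ are again untouched. In this direction the match between $\mathfrak{S}^P$ and $\mathfrak{S}^<$ is purely definitional: $P(s_i'\cdot t_i)$ is literally $s_i<t_i$ under the defining equivalence, so $\mathfrak{S}^P$ holds in the constructed model exactly when $\mathfrak{S}^<$ does.

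The main obstacle is not any single deduction but the bookkeeping required to match the supplied true-statement sets across the translation, which is asymmetric. Going from $<$ to $P$, the identification $s<t\Leftrightarrow P(s'\cdot t)$ genuinely uses the left-invariance axiom $\OrdL$ (available throughout, since $\OrdB$ entails $\OrdL$ by instantiating the right multiplier to $e$), whereas going from $P$ to $<$ the same identification holds by definition. One should therefore be careful to invoke $\OrdL$ exactly in the forward direction for parts (c) and (d). Finally, because Lemma~\ref{lem:1} and Lemma~\ref{lem:2} describe mutually inverse operations on the relevant predicates, the two transformations compose as expected, confirming equiconsistency rather than a one-way implication.
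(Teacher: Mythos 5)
Your proposal is correct and takes essentially the same route as the paper: both directions are handled by the model transformations $P(x)\Leftrightarrow e<x$ and $x<y\Leftrightarrow P(x'\cdot y)$, with Lemma~\ref{lem:1} and Lemma~\ref{lem:2} supplying the axiom conversions. Your additional bookkeeping for matching $\mathfrak{S}^<$ with $\mathfrak{S}^P$ (using left-invariance in the forward direction, and noting $\OrdB$ entails $\OrdL$ over $\Gr$) is a detail the paper's proof leaves implicit, and it is handled correctly.
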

\begin{proof}
    If a theory $T_1$ involving $<$ (in either of the four cases) is consistent, then there is a model $\mathfrak{M}_1$ of it. We extend $\mathfrak{M}_1$ by the interpretation of $P$, defined as $P(x)$ if and only if $e<x$, then remove the interpretation of $<$. Then by Lemma~\ref{lem:1}, the resulting model $\mathfrak{M}_2$ is a model of the corresponding theory $T_2$ involving $P$.

    Conversely, if a theory $T_2$ involving $P$ is consistent, then there is a model $\mathfrak{M}_2$ of it. We interpret $<$ by $x<y$ if and only if $P(x'\cdot y)$ and then remove the interpretation of $P$. Then by Lemma~\ref{lem:2}, the resulting model $\mathfrak{M}_1$ is a model of the corresponding theory $T_1$ involving $<$.
\end{proof}

Similarly, by Lemma~\ref{lem:3} and Lemma~\ref{lem:4}, we have the following proposition.

\begin{proposition}\label{prop:positive_cone_C}
    Let $r_i$, $s_i$, $t_i$ $(i=1,\ldots,k)$ be ground terms. Let $\mathfrak{S}$ denote the set of inequalities \[\{r_i\neq s_i,s_i\neq t_i, t_i\neq r_i:i=1,\ldots,k\}.\] Let $\mathfrak{S}^C$ denote the set of axioms \[\{C(r_0,s_0,t_0\}\cup\{C(r_i,s_i,t_i)\lor C(r_i,s_i,t_i): i=1,\ldots, k\}.\] Let $\mathfrak{S}^P$ denote the set of axioms \[\{P(r_0'\cdot s_0,r_0' \cdot t_0)\}\cup\{P(r_i'\cdot s_i,r_i' \cdot t_i)\lor P(r_i' \cdot t_i,r_i'\cdot s_i): i=1,\ldots, k\}.\] Let $\AxPCL'$ (resp. $\AxC'$) denote $\AxPCL$ (resp. $\AxC$) minus the axiom of connectedness. 
    Then     
    \begin{enumerate}[label=(\alph*)]
        \item $\Gr \cup \Ax_R\cup \AxC \cup \OrdC \cup \mathfrak{S}$ is consistent if and only if $\Gr\cup \Ax_R\cup \AxPCL\cup \mathfrak{S}$ is consistent;
        \item $\Gr \cup \Ax_R\cup\AxC \cup \OrdCB \cup \mathfrak{S}$ is consistent if and only if $\Gr\cup \Ax_R\cup \AxPCL\cup \PCB\cup \mathfrak{S}$ is consistent;
        \item $\Gr \cup \Ax_R\cup \AxC' \cup \OrdC \cup \mathfrak{S}^C$ is consistent if and only if $\Gr\cup \Ax_R\cup \AxPCL'\cup \mathfrak{S}^P$ is consistent;
        \item $\Gr \cup \Ax_R\cup\AxC' \cup \OrdCB \cup \mathfrak{S}^C$ is consistent if and only if $\Gr\cup \Ax_R\cup \AxPCL'\cup \PCB\cup \mathfrak{S}^P$ is consistent.
    \end{enumerate}
\end{proposition}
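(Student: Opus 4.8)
The plan is to follow verbatim the strategy used for Proposition~\ref{prop:positive_cone}, replacing the appeals to Lemma~\ref{lem:1} and Lemma~\ref{lem:2} by appeals to Lemma~\ref{lem:3} and Lemma~\ref{lem:4}. In each of the four cases I keep the group reduct (the interpretations of $\cdot$, $'$, and $e$, hence the validity of $\Gr\cup\Ax_R$) fixed throughout, and only swap the interpretation of the ternary relation $C$ for that of the binary positive-cone relation $P$, or vice versa. For the direction from a $C$-theory to the corresponding $P$-theory I would take a model $\mathfrak{M}_1$ witnessing consistency, define $P(x,y)$ to hold exactly when $C(e,x,y)$, and delete the interpretation of $C$; Lemma~\ref{lem:3} then shows that the four axioms of $\AxC$ pass to the corresponding axioms of $\AxPCL$ (with connectedness omitted in cases (c), (d)), and that $\OrdCB$ yields $\PCB$ in cases (b), (d). For the reverse direction I would start from a model $\mathfrak{M}_2$ of a $P$-theory, define $C(x,y,z)$ to hold exactly when $P(x'\cdot y, x'\cdot z)$, delete $P$, and invoke Lemma~\ref{lem:4}, which additionally supplies the left-invariance axiom $\OrdC$ for free.

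The only point not already covered by the two lemmas is that the situation-dependent statements match up across the translation. In cases (a), (b) this is immediate: $\mathfrak{S}$ consists of ground inequalities $r_i\neq s_i$, $s_i\neq t_i$, $t_i\neq r_i$ that mention neither $C$ nor $P$, so they survive any change of the ordering predicate as long as the group reduct is untouched. In cases (c), (d) I need the identity $C(x,y,z)\Leftrightarrow P(x'\cdot y, x'\cdot z)$ to hold in whichever model is being transported. Under the Lemma~\ref{lem:4} definition this is simply the definition of $C$. Under the Lemma~\ref{lem:3} definition it follows from left-invariance: $P(x'\cdot y, x'\cdot z)$ unfolds to $C(e, x'\cdot y, x'\cdot z)$, and since $\OrdC$ (available in the $C$-model, as it is implied by $\OrdCB$ on setting the right multiplier to $e$) gives the biconditional $C(u\cdot x,u\cdot y,u\cdot z)\Leftrightarrow C(x,y,z)$, applying it with $u=x$ turns this into $C(x,y,z)$. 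With this identity in hand, the singleton $C(r_0,s_0,t_0)$ of $\mathfrak{S}^C$ corresponds precisely to $P(r_0'\cdot s_0, r_0'\cdot t_0)$ of $\mathfrak{S}^P$, and each disjunction $C(r_i,s_i,t_i)\lor C(r_i,t_i,s_i)$ corresponds to $P(r_i'\cdot s_i, r_i'\cdot t_i)\lor P(r_i'\cdot t_i, r_i'\cdot s_i)$, so $\mathfrak{S}^C$ holds in one model if and only if $\mathfrak{S}^P$ holds in its transport.

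I expect the main (and fairly minor) obstacle to be exactly this bookkeeping of $\mathfrak{S}^C\leftrightarrow\mathfrak{S}^P$, since the genuine logical content has already been isolated in Lemmas~\ref{lem:3} and~\ref{lem:4}. The one subtlety worth flagging is that the identity $C(x,y,z)\Leftrightarrow P(x'\cdot y, x'\cdot z)$ must be derived using only cyclicity and left-invariance, never connectedness, so that it remains valid in the weakened cases (c), (d) where the connectedness axioms have been dropped; the derivation sketched above respects this, as it never invokes trichotomy. Assembling these pieces gives both implications in each of the four cases, establishing the claimed equiconsistency.
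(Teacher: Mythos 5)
Your proof is correct and takes essentially the same approach as the paper: the paper's own proof is simply the observation that the model-transport argument of Proposition~\ref{prop:positive_cone} goes through verbatim with Lemma~\ref{lem:3} and Lemma~\ref{lem:4} in place of Lemma~\ref{lem:1} and Lemma~\ref{lem:2}. Your extra bookkeeping showing that $\mathfrak{S}^C$ and $\mathfrak{S}^P$ correspond under the translation using only cyclicity and left-invariance (never connectedness) is a detail the paper leaves implicit, and you handle it correctly.
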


Proposition~\ref{prop:positive_cone} and Proposition~\ref{prop:positive_cone_C} offer an alternative approach to demonstrating the inconsistency of the theories in  Proposition~\ref{prop:main} and Proposition~\ref{symmetry} through automated reasoning, which usually enhances efficiency.

It is worth noting that the performance of the automated theorem prover Prover9 on a single task is highly unpredictable. For further optimisations, one may fine tune the proof search strategy or attempt alternative presentations of the same group.

\subsection{Examples}
\begin{example}
    In Example~\ref{ex:D7}, we established the non-circular-orderability of the dihedral group $D_{7}$ using automated reasoning. However, the process of deducing the inconsistency of $\Gr\cup \Ax_R\cup \AxC\cup\OrdC \cup \mathfrak{S}$ with Prover9 is time-consuming. By the statement (a) in Proposition~\ref{prop:positive_cone_C}, we can alternatively demonstrate the inconsistency of $\Gr \cup \Ax_R \cup \AxPCL \cup \mathfrak{S}$ (Task 6.1). The positive cone translation significantly reduces the time required; see Table~\ref{tab:summary1}.
    
\end{example}

\begin{example}\label{ex:sl2-weak}
    Consider the special linear group $SL_2(\mathbb{Z})$ with the presentation as shown in Example~\ref{ex:sl2}. By \cite[Theorem 5.10]{giraudet2018first}, in a group with a bi-invariant circular order, the torsion part is central. Since $a$ is a torsion element in $SL_2(\mathbb{Z})$ with $ab\neq ba$, it follows that $SL_2(\mathbb{Z})$ does not admit a bi-invariant circular order. 

    To prove this fact automatically, we first establish the inequalities $e\neq ab$, $e\neq ba$, $ab\neq ba$ by building a model (Task 7.1) of $\Gr\cup \Ax_R\cup \{e\neq a\cdot b,e\neq b\cdot a,a\cdot b\neq b\cdot a\}$ by Mace4. Then we prove that $\Gr\cup \Ax_R\cup \AxPCL'\cup \PCB\cup\{P(a\cdot b,b\cdot a)\}$ is inconsistent (Task 7.2) by Prover9. Thus by Proposition~\ref{weak-cbo}, Proposition~\ref{symmetry} and the statement (d) in Proposition~\ref{prop:positive_cone_C}, the special linear group $SL_2(\mathbb{Z})$ does not admit a bi-invariant circular order. 
\end{example}

\begin{example} \label{ex:Poincare}
The fundamental group of the Poincar\'e homology sphere has a presentation 
\[\langle a,b\;|\; (ab)^{2} = a^{3}, a^{3} = b^{5} \rangle.\]
This group is finite (of order $120$), hence it is not left-orderable.

To prove this fact automatically, we check that $a\neq e$ by finding a model (Task 8.1) of $\Gr\cup \Ax_R\cup \{e\neq a\}$ using Mace4. Then we prove that $\Gr\cup\Ax_R\cup \AxPC'\cup\{P(a)\}$ is inconsistent (Task 8.2) by Prover9. Thus by Proposition~\ref{prop:weak}, Proposition~\ref{symmetry} and the statement (a) in Proposition~\ref{prop:positive_cone}, the fundamental group of the Poincar\'e homology sphere is not left-orderable. 
\end{example} 

\begin{example} \label{ex:5_2}
The knot group of the knot $5_{2}$ has a presentation \[\langle a,b \;|\; b^{2}a^{2}b^{2} = a b^{3}a\rangle.\] This group is known to be left-orderable, as all knot groups, and not bi-orderable, according to \cite[page 5]{chiswell2015residual} and \cite[Theorem 7]{naylor2016generalized}. For the same reason as in Example~\ref{ex:B3}, this group does not admit a bi-invariant circular order.

We first prove the non-bi-orderability using automated reasoning. A model for $\Gr \cup \Ax_{R} \cup \{b\cdot a \neq (a\cdot b)\cdot b\}$ can be found (Task 9.1) using Mace4. Thus $ba \neq ab^{2}$ holds true in this group. We can verify that $\Gr \cup \Ax_R\cup\AxPC' \cup \PB \cup \{P((b\cdot a)'\cdot ((a\cdot b)\cdot b))\}$ is inconsistent (Task 9.2) by Prover9. By Proposition~\ref{prop:weak}, Proposition~\ref{symmetry} and the statement (d) in Proposition~\ref{prop:positive_cone}, the knot group of $5_{2}$ is not bi-orderable. Alternatively, we may use the inequality $ab \neq ba$ in the automated proof. (Task 9.3 and Task 9.4) 

Now we show how to prove the non-existence of a bi-invariant circular order using automated reasoning. First, we prove $e\neq ab$, $e\neq ba$ and $ab\neq ba$ by building a model (Task 9.5) of $\Gr\cup \Ax_R\cup\{e\neq a\cdot b, e\neq b\cdot a, a\cdot b\neq b\cdot a\}$ using Mace4. Then we prove the inconsistency of $\Gr\cup \Ax_R\cup\AxPCL'\cup \PCB\cup\{P(a\cdot b,b\cdot a)\}$ (Task 9.6) using Prover9. By Proposition~\ref{weak-cbo}, Proposition~\ref{symmetry} and the statement (d) in Proposition~\ref{prop:positive_cone_C}, the knot group of $5_{2}$ does not admit a bi-invariant circular order. Alternatively, we may use the inequalities $e\neq ba$, $e\neq ab^2$ and $ba\neq ab^2$ in the automated proof. (Task 9.7 and Task 9.8)
\end{example} 

\begin{example}\label{ex:weeks}
The fundamental group of the Weeks manifold has a presentation
\[\langle a,b\;|\; a^{2}b^{2}a^{2} = ba^{-1}b,b^2a^{2}b^{2} = ab^{-1}a
\rangle.\]
According to \cite[Theorem 9.2]{calegari2003laminations}, this group is not circularly orderable. While our approach is not sophisticated enough to provide an automated proof of the non-circular-orderability in reasonable time, we can prove two weaker properties using automated reasoning: the non-left-orderability and the absence of bi-invariant circular orders.

By building a finite model (Task 10.1) of $\Gr \cup \Ax_R \cup \{ a\neq e\}$ using Mace4, we obtain that $a\neq e$. By deriving a contradiction (Task 10.2) of $\Gr\cup\Ax_R\cup\AxPC \cup\{a\neq e\}$ using Prover9, it follows from Proposition~\ref{prop:main} and the statement (a) in Proposition~\ref{prop:positive_cone} that, the Weeks manifold group is not left-orderable.

In order to prove the non-existence of bi-invariant circular orders automatically, we need to confirm three inequalities $e\neq ab$, $e\neq ba$ and $ab\neq ba$. The first two inequalities can be verified by building a finite model (Task 10.3) of $\Gr\cup\Ax_R\cup\{e\neq a\cdot b, e\neq b\cdot a\}$ using Mace4. However, the last inequality $ab\neq ba$ turns out to be more challenging. One way to prove $ab\neq ba$ in the Weeks manifold group is through an $SL_2(\mathbb{C})$-representation as described on \cite[page 24]{chinburg2001arithmetic}. Alternatively, we can argue that the equality $ab=ba$ implies that the group of interest is isomorphic to the product of two cyclic groups of order $5$, hence it is finite and non-cyclic. According to \cite[Theorem 1]{zheleva1976cyclically}, in such a case, it is not circularly orderable.

We can prove that the theory $\Gr\cup\Ax_R\cup \AxPCL'\cup \PCB\cup\{P(a\cdot b, b\cdot a)\}$ is inconsistent (Task 10.4) using Prover9. Thus by Proposition~\ref{weak-cbo}, Proposition~\ref{symmetry} and the statement (d) in Proposition~\ref{prop:positive_cone_C}, the Weeks manifold group does not admit a bi-invariant circular order.
\end{example}

\section{Torsions, generalised torsions, and more} \label{sec:torsions}
\subsection{Torsions and generalised torsions}
In this subsection, we show that, if $\mathfrak{S}$ contains a single inequality $t_1\neq t_2$, then the methods of establishing the non-left-orderability (resp. non-bi-orderability) via the weakened theory described in Subsection~\ref{subsec:weakened} is essentially detecting whether $t_1^{-1} t_2$ represents a torsion (resp. generalised torsion) in the group presented by $\langle S|R\rangle$. Note that a nontrivial torsion (resp. generalised torsion) is a well-known obstruction to left-orderability (resp. bi-orderability); see \cite[Proposition 1.3 and Problem 1.22]{clay2016ordered} for example.

First, we present the definition of torsion and establish the desired equivalence.

\begin{definition} 
A group element $x \in G$ is called a torsion if there exists a positive integer $n$ such that $x^{n} =e$ where $e$ is the identity element of the group $G$.
\end{definition}

\begin{proposition}\label{prop-torsion}
    Let $G$ be a group with presentation $\langle S|R\rangle$. Let $t$ be a ground term representing a group element in $G$. Then $\Gr\cup\Ax_R\cup\AxPC'\cup\{P(t)\}$ is inconsistent if and only if $t$ represents a torsion in $G$.
\end{proposition}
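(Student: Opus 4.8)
The plan is to prove the two directions separately, using the axioms $\AxPC'$ (irreflexivity and closure, but not connectedness) together with the presentation axioms.

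First I would handle the easier ``if'' direction. Suppose $t$ represents a torsion in $G$, so that $t^n = e$ in $G$ for some positive integer $n \geq 1$; this equation is a consequence of $\Gr \cup \Ax_R$. Working in any putative model of $\Gr \cup \Ax_R \cup \AxPC' \cup \{P(t)\}$, I would assume $P(t)$ and apply the closure axiom $n-1$ times to derive $P(t^n)$, hence $P(e)$ by the relation $t^n = e$. This directly contradicts the irreflexivity axiom $\neg P(e)$. Thus no model exists and the theory is inconsistent. The only subtlety is that torsion requires $t$ to have finite order, giving us the single equation $t^n = e$ that collapses the positive chain down to $e$; I expect this step to be entirely routine.

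The harder ``only if'' direction is the main obstacle. Here I would argue contrapositively: assuming $t$ is \emph{not} a torsion in $G$, I must construct a model of $\Gr \cup \Ax_R \cup \AxPC' \cup \{P(t)\}$. Since $t$ is not a torsion, the cyclic subgroup $\langle t \rangle \leq G$ is infinite, so $t$ generates a copy of $\mathbb{Z}$. The natural candidate is to define $P$ on $G$ by declaring $P(x)$ to hold if and only if $x$ lies in the subsemigroup $\{t^m : m \geq 1\}$ of positive powers of $t$. I would verify that this interpretation, together with $G$ itself as the model of $\Gr \cup \Ax_R$, satisfies the required axioms: irreflexivity $\neg P(e)$ holds because $t$ being non-torsion means $t^m \neq e$ for all $m \geq 1$; closure $P(x) \wedge P(y) \to P(x \cdot y)$ holds because $t^{m_1} \cdot t^{m_2} = t^{m_1 + m_2}$ with $m_1 + m_2 \geq 1$; and $P(t)$ holds trivially. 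Crucially, the connectedness axiom has been dropped from $\AxPC'$, which is exactly what makes this sparse positive cone admissible — we need not decide the sign of elements outside $\langle t \rangle$.

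The step I expect to be delicate is verifying that the semigroup $\{t^m : m \geq 1\}$ is genuinely free of the identity, i.e. that distinct positive powers do not accidentally equal $e$; this is precisely where the non-torsion hypothesis is used, and it must be invoked carefully since $t$ being non-torsion is exactly the statement $t^m \neq e$ for every $m \geq 1$. An alternative, perhaps cleaner, route for this direction would be to invoke Proposition~\ref{prop:positive_cone}(a) to translate back to the order language, and then appeal to the well-known fact that a non-torsion element's cyclic subgroup admits a left-invariant partial order that can be used to build the desired structure; however, the direct semigroup construction above seems most self-contained and I would present that. Finally, I would note the symmetry with the bi-orderability/generalised-torsion case of Proposition~\ref{prop-generalised-torsion}, where the conjugacy-invariance axiom $\PB$ forces the positive cone to be closed under conjugation, replacing the cyclic semigroup by the normal subsemigroup generated by conjugates of $t$.
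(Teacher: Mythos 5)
Your proposal is correct and follows essentially the same argument as the paper: the ``if'' direction iterates the closure axiom on $t^n=e$ to contradict irreflexivity, and the ``only if'' direction builds a model of the weakened theory by interpreting $P$ as the subsemigroup $\{\bar{t}^m : m\ge 1\}$ of positive powers, exactly as in the paper's proof. The delicate point you flag (that no positive power of $\bar{t}$ equals $e$) is precisely where the paper also invokes the non-torsion hypothesis, so there is no gap.
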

\begin{proof}
    We first prove the ``if'' part. If $t$ represents a torsion in $G$, then there exists a positive integer $n$ such that $t^n=e$ holds assuming the axioms $\Gr$ and $\Ax_R$, where the ground term $t^n$ is recursively defined by $t^1=t$ and $t^n=t^{n-1}\cdot t$ for $n\ge 2$. The closure axiom in $\AxPC'$ and the axiom $P(t)$ implies that $P(t^n)$ for every positive integer $n$ inductively, hence we have $P(e)$, which contradicts to the irreflexivity axiom in $\AxPC'$. Therefore $\Gr\cup\Ax_R\cup\AxPC'\cup\{P(t)\}$ is inconsistent if $t$ represents a torsion in $G$.

    Next, we prove the ``only if'' part. Suppose that $t$ represents a non-torsion element $\bar{t}$ in $G$. For any $x\in G$, let $P(x)$ be the proposition that $\bar{t}^n=x$ in $G$ for some positive integer $n$. We prove that the group $G$ together with the predicate $P$ constitutes a model for $\Gr\cup\Ax_R\cup\AxPC'\cup\{P(t)\}$:
    \begin{enumerate}[label=(\alph*)]
        \item The axioms in $\Gr\cup Ax_R$ are satisfied because $G$ is the group with presentation $\langle S|R\rangle$. 
        \item If $P(e)$ holds, then there exists a positive integer $n$ such that $\bar{t}^n=e$, which contradicts to the assumption that $\bar{t}$ is not a torsion. Thus the irreflexivity axiom in $\AxPC'$ is satisfied. 
        \item If $P(x)$ and $P(y)$ holds for $x,y\in G$, then there exist positive integers $m$ and $n$ such that $\bar{t}^m=x$ and $\bar{t}^{n}=y$, so we have $\bar{t}^{m+n}=x$ and therefore $P(xy)$ holds. Thus the transitivity axiom in $\AxPC'$ is satisfied.
        \item The axiom $P(t)$ is satisfied by the definition of $P$.
    \end{enumerate}
    Therefore the theory $\Gr\cup\Ax_R\cup\AxPC'\cup\{P(t)\}$ is consistent if $t$ represents a non-torsion element in $G$.
\end{proof}

Then, we present the definition of generalised torsion and prove the analogous statement to Proposition~\ref{prop-torsion}. For convenience, we consider the identity element $e$ as a generalised torsion.

\begin{definition}  
A group element $x \in G$ is called a generalised torsion if there exist $y_{1}, \ldots, y_{n} \in G$ such that \[(y_{1}xy^{-1}_{1})(y_{2}xy^{-1}_{2})\cdots (y_{n}xy^{-1}_{n})=e,\] where $e$ is the identity element of of the group $G$.
\end{definition}  

\begin{proposition}\label{prop-generalised-torsion}
    Let $G$ be a group with presentation $\langle S|R\rangle$. Let $t$ be a ground term representing a group element in $G$. Then $\Gr\cup\Ax_R\cup\AxPC'\cup \PB\cup\{P(t)\}$ is inconsistent if and only if $t$ represents a generalised torsion in $G$.
\end{proposition}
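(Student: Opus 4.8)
The plan is to mirror the proof of Proposition~\ref{prop-torsion}, replacing the cyclic set of powers of $\bar t$ used there with the set of products of conjugates of $\bar t$, and invoking the conjugacy invariance axiom $\PB$ at exactly the point where the torsion argument used only closure. Throughout, $\bar t$ denotes the element of $G$ represented by the ground term $t$.

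For the ``if'' direction, suppose $t$ represents a generalised torsion, so there exist $y_1,\ldots,y_n\in G$ with $(y_1\bar t y_1^{-1})\cdots(y_n\bar t y_n^{-1})=e$ holding under $\Gr\cup\Ax_R$. Starting from the hypothesis $P(t)$, the axiom $\PB$ yields $P(y_i\bar t y_i^{-1})$ for each $i$, and the closure axiom in $\AxPC'$ then propagates $P$ across the whole product to give $P(e)$, which contradicts the irreflexivity axiom. Hence the theory is inconsistent. This is compatible with the convention that $e$ counts as a generalised torsion, since there $P(t)=P(e)$ already clashes with irreflexivity.

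For the ``only if'' direction I would argue the contrapositive by exhibiting a model. Assume $\bar t$ is not a generalised torsion. Define the unary predicate $P$ on $G$ by letting $P(x)$ hold iff $x=(y_1\bar t y_1^{-1})\cdots(y_n\bar t y_n^{-1})$ for some $n\ge 1$ and some $y_1,\ldots,y_n\in G$; that is, $P$ cuts out the sub-semigroup of $G$ generated by all conjugates of $\bar t$. I then check the axioms one by one: $\neg P(e)$ holds precisely because $\bar t$ is not a generalised torsion; closure holds because the concatenation of two such products is again such a product; the axiom $\PB$ holds because conjugating a product $\prod_i y_i\bar t y_i^{-1}$ by $z$ distributes to give $\prod_i (zy_i)\bar t(zy_i)^{-1}$, once more a product of conjugates of $\bar t$; and $P(t)$ holds by taking $n=1$, $y_1=e$. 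This provides a model of $\Gr\cup\Ax_R\cup\AxPC'\cup\PB\cup\{P(t)\}$, so the theory is consistent whenever $t$ is not a generalised torsion.

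The only step needing genuine (if modest) attention is the verification of $\PB$ in the constructed model, which rests on the closure of the defining set under conjugation, i.e. the identity $z(y_i\bar t y_i^{-1})z^{-1}=(zy_i)\bar t(zy_i)^{-1}$ together with the fact that conjugation distributes over products. Everything else is a direct transcription of the torsion argument, with the normal sub-semigroup generated by $\bar t$ playing the role that the semigroup of powers of $\bar t$ played in Proposition~\ref{prop-torsion}.
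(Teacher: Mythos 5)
Your proposal is correct and follows essentially the same route as the paper's proof: the ``if'' direction derives $P(e)$ from $P(t)$ via $\PB$ and closure and contradicts irreflexivity, while the ``only if'' direction builds a model by letting $P$ cut out the normal subsemigroup of products of conjugates of $\bar t$, exactly as the paper does. The only cosmetic difference is that the paper is explicit about passing from the group elements $y_i$ to ground terms $t_i$ representing them (needed because the first-order derivation manipulates terms), a step you leave implicit but which is harmless since every element of $G$ is represented by some ground term over the generators.
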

\begin{proof}
    We first prove the ``if'' part. If $t$ represents a generalised torsion $\bar{t}$ in $G$, then there exist $y_{1}, \ldots, y_{n} \in G$ such that \[(y_{1}\bar{t} y^{-1}_{1})(y_{2}\bar{t} y^{-1}_{2})\cdots (y_{n}\bar{t} y^{-1}_{n})=e.\] By the definition of $\langle S|R \rangle$, there exist ground terms $t_1, \ldots, t_n$, such that the product of $(t_i\cdot \bar{t})\cdot t_i'$ $(i=1,\ldots, n)$ equals to $e$, assuming the axioms $\Gr$ and $\Ax_R$. The conjugacy invariance axiom $\PB$ and $P(t)$ implies that $P((t_i\cdot \bar{t})\cdot t_i')$ for each $i=1,\ldots, n$. The closure axiom in $\AxPC'$ implies $P(e)$ by induction, which contradicts to the irreflexivity axiom in $\AxPC'$. Therefore $\Gr\cup\Ax_R\cup\AxPC'\cup \PB \cup\{P(t)\}$ is inconsistent if $t$ represents a generalised torsion in $G$.

    Next, we prove the ``only if'' part. Suppose that $t$ represents a element $\bar{t}$ in $G$ that is not a generalised torsion. For any $x\in G$, let $P(x)$ be the proposition that \[(y_1\bar{t}y_1^{-1})(y_2 \bar{t} y_2^{-1})\cdots(y_n \bar{t} y_n^{-1})=x\] in $G$ for some $y_1,\ldots, y_n\in G$. We prove that the group $G$ together with the predicate $P$ constitutes a model for $\Gr\cup\Ax_R\cup\AxPC'\cup \PB \cup\{P(t)\}$:
    \begin{enumerate}[label=(\alph*)]
        \item The axioms in $\Gr\cup Ax_R$ are satisfied because $G$ is the group with presentation $\langle S|R\rangle$. 
        \item Since $\bar{t}$ is not a generalised torsion, we have $\neg P(e)$. Thus the irreflexivity axiom in $\AxPC'$ is satisfied. 
        \item If $P(x)$ and $P(y)$ holds for $x,y\in G$, then there exist $y_1,\ldots, y_m\in G$ and $z_1,\ldots, z_n\in G$ such that
        \[(y_1\bar{t}y_1^{-1})(y_2 \bar{t} y_2^{-1})\cdots(y_m \bar{t} y_m^{-1})=x\]
        and
        \[(z_1\bar{t}z_1^{-1})(z_2 \bar{t} z_2^{-1})\cdots(z_m \bar{t} z_n^{-1})=y,\]
        so we have  \[(y_1\bar{t}y_1^{-1})(y_2 \bar{t} y_2^{-1})\cdots(y_m \bar{t} y_m^{-1})(z_1\bar{t}z_1^{-1})(z_2 \bar{t} z_2^{-1})\cdots(z_m \bar{t} z_n^{-1})=xy\]
        and therefore $P(xy)$ holds. Thus the transitivity axiom in $\AxPC'$ is satisfied.
        \item If $P(x)$ holds for $x\in G$, then there exist $y_1,\ldots, y_n\in G$ such that
        \[(y_1\bar{t}y_1^{-1})(y_2 \bar{t} y_2^{-1})\cdots(y_n \bar{t} y_n^{-1})=x.\] Then for any $y\in G$ we have \[((y y_1)\bar{t}(y y_1)^{-1})((y y_2) \bar{t} (y y_2)^{-1})\cdots((y y_n) \bar{t} (y y_n)^{-1})=y x y^{-1}.\] Thus the conjugacy invariant axiom $\PB$ is satisfied.
        \item The axiom $P(t)$ is satisfied by the definition of $P$.
    \end{enumerate}
    Therefore the theory $\Gr\cup\Ax_R\cup\AxPC'\cup\PB\cup\{P(t)\}$ is consistent if $t$ does not represent a generalised torsion in $G$.
\end{proof}
\subsection{Analogous statement for bi-invariant circular orders}
In this subsection, we establish an analogous statement for bi-invariant circular orders. We give an equivalent condition for the inconsistency of a theory where the axiom of connectedness is weakened as described in Subsection~\ref{subsec:weakened} and the axiom of cyclicity is removed. 

\begin{proposition}~\label{prop-CBO-monoid}
    Let $G$ be a group with presentation $\langle S|R\rangle$. Let $t_1$ and $t_2$ be ground terms representing the group elements $\bar{t}_1$ and $\bar{t}_2$ in $G$ respectively. Let $\overline{\AxPCL}'$ denote $\AxPCL$ minus the axioms of cyclicity and connectedness. Then $\Gr\cup\Ax_R\cup\overline{\AxPCL}'\cup \PCB\cup\{P((t_2 \cdot t_1)\cdot t_2',t_1)\}$ is inconsistent if and only if $\bar{t}_2^{-1}$ is in the monoid generated by $\bar{t}_2$ and the centraliser of $\bar{t}_1$.
\end{proposition}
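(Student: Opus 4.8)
Throughout I write $a=\bar t_1$ and $s=\bar t_2$, let $Z=C_G(a)$ be the centraliser of $a$, and let $M=\langle s,Z\rangle$ be the submonoid of $G$ generated by $s$ and $Z$; since $Z$ is a subgroup, every element of $M$ can be written $c_0 s c_1 s\cdots s c_k$ with $c_i\in Z$ and $k\ge 0$. Beyond the group axioms, the retained theory $\overline{\AxPCL}'\cup\PCB$ consists only of irreflexivity $\neg P(x,x)$, transitivity, and conjugacy invariance, together with the single ground atom $P(sas^{-1},a)$ (the reading of $P((t_2\cdot t_1)\cdot t_2',t_1)$). My plan is to treat the two implications asymmetrically: for ``if'' I would produce an explicit refutation whenever $s^{-1}\in M$, and for ``only if'' I would exhibit a model on the group $G$ itself whenever $s^{-1}\notin M$. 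This avoids having to reason about arbitrary models of $\Gr\cup\Ax_R$.

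For the ``if'' direction, assuming $s^{-1}=c_0 s c_1\cdots s c_k\in M$, I would first take inverses and use that $Z$ is a subgroup to rewrite this as $s=c_0' s^{-1} c_1' s^{-1}\cdots c_{n-2}' s^{-1} c_{n-1}'$ with $n=k+1$ and $c_i'\in Z$. Then I would set $z_0=e$ and $z_{i+1}=z_i c_i' s^{-1}$, so that $z_{i+1}s=z_i c_i'$ and $z_{n-1}=s(c_{n-1}')^{-1}$, and apply conjugacy invariance to the atom with each $z_i$ to obtain $P\big((z_i s)a(z_i s)^{-1},\,z_i a z_i^{-1}\big)$. The crux is the identity $(z_{i+1}s)a(z_{i+1}s)^{-1}=(z_i c_i')a(z_i c_i')^{-1}=z_i a z_i^{-1}$, which holds because $c_i'\in Z$; it makes the target of the $i$-th instance coincide with the source of the $(i{+}1)$-st, so transitivity chains all $n$ instances into $P(sas^{-1},\,z_{n-1}a z_{n-1}^{-1})$. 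Since $z_{n-1}=s(c_{n-1}')^{-1}$ and $(c_{n-1}')^{-1}\in Z$, the right entry is again $sas^{-1}$, giving $P(sas^{-1},sas^{-1})$ and contradicting irreflexivity. Every equality used is a consequence of $\Gr\cup\Ax_R$, so the refutation is valid in all models and the theory is inconsistent.

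For the ``only if'' direction I would argue the contrapositive by building a model over $G$, interpreting $P$ as the transitive closure of $E=\{(gsas^{-1}g^{-1},\,gag^{-1}):g\in G\}$, the conjugation-closure of the atom. By construction this $P$ is transitive, contains the atom, and is conjugacy-closed (conjugating an $E$-walk yields an $E$-walk), so $(G,P)$ satisfies every axiom except possibly irreflexivity; consistency thus reduces to showing that $E$ admits no closed walk. The bookkeeping step I expect to be delicate is to identify the conjugates of $a$ (the only elements occurring in $E$) with left cosets via $gag^{-1}\leftrightarrow gZ$, under which an edge runs from $gsZ$ to $gZ$. Tracing a hypothetical closed walk $u_0\to\cdots\to u_n=u_0$ should then force representatives satisfying $z_{i+1}=z_i c_i s^{-1}$ with $c_i\in Z$ together with a closing relation $z_{n-1}=z_0 s c^{-1}$ with $c\in Z$; cancelling $z_0$ turns this into $c_0 s^{-1}\cdots c_{n-2}s^{-1}c=s$, i.e. $s\in\langle s^{-1},Z\rangle$, which is equivalent to $s^{-1}\in M$. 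Hence $s^{-1}\notin M$ forces irreflexivity, and the model $(G,P)$ witnesses consistency.

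The main obstacle is exactly this coset bookkeeping: checking that $gag^{-1}\leftrightarrow gZ$ is well defined, that the edge relation and the closed-walk computation are independent of the representative choices, and that the resulting identity passes cleanly between $\langle s^{-1},Z\rangle$ and $\langle s,Z\rangle=M$ (which again uses that $Z$ is a subgroup). A secondary point to verify is that the transitive closure of $E$ really is conjugacy-closed and never relates elements outside the conjugacy class of $a$, so that irreflexivity is indeed the only axiom requiring attention.
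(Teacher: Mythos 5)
Your proposal is correct and takes essentially the same route as the paper. Writing $Z$ for the centraliser of $\bar t_1$ and $M$ for the monoid generated by $\bar t_2$ and $Z$: your ``if'' refutation is the paper's argument (conjugated copies of the atom chained by transitivity, using that $Z$-conjugation fixes $\bar t_1$, ending in a violation of irreflexivity), and your ``only if'' model coincides with the paper's, whose explicit predicate --- $P(x,y)$ iff $x=z_1 z_2\,\bar t_2\bar t_1\bar t_2^{-1}\,z_2^{-1}z_1^{-1}$ and $y=z_1\bar t_1 z_1^{-1}$ for some $z_1\in G$ and $z_2\in M$ --- is precisely a closed form of your transitive closure of the conjugation orbit of the atom, so the two proofs differ only in where the verification work sits: you get transitivity and conjugacy invariance for free and pay with the coset-graph acyclicity check, while the paper checks transitivity and irreflexivity of its parametrised predicate directly, the underlying witness algebra (a cycle computation producing $\bar t_2^{-1}\in M$) being the same.
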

\begin{proof}
    We first prove the ``if'' part. If $\bar{t}_2^{-1}$ is in the monoid generated by $\bar{t}_1$, $\bar{t}_1^{-1}$, and $\bar{t}_2$, then there exist a nonnegative integer $n$ and some ground terms $s_0=t_2, s_1,s_2,\ldots s_n$, such that $s_i$ ($i=1,2,\ldots,n$) is either $s_{i-1}\cdot t_2$ or $s_{i-1}\cdot t$ where $t$ and $t_1$ represent commutative elements in $G$, and that $s_n = e$ assuming $\Gr$ and $\Ax_R$. We prove $P((s_i\cdot t_1)\cdot s_i', t_1)$ ($i=0,1,\ldots, n$) inductively.

    For $i=0$, the statement $P((s_0\cdot t_1)\cdot s_0', t_1)$ holds true by assumption. Suppose that the statement $P((s_{i-1}\cdot t_1)\cdot s_{i-1}', t_1)$ holds true for some $i\in\{0,1,\ldots,n-1\}$. If $s_i= s_{i-1}\cdot t_2$, then we have \[(s_{i}\cdot t_1)\cdot s_{i}'=(s_{i-1}\cdot ((t_2\cdot t_1)\cdot t_2'))\cdot s_{i-1}'\]
    assuming $\Gr$. By the conjugacy invariance axiom $\PCB$ and $P((t_2\cdot t_1)\cdot t_2', t_1)$, we have $P((s_{i}\cdot t_1)\cdot s_{i}',  ((s_{i-1}\cdot t_1)\cdot s_{i-1}'))$. By the transitivity axiom in $\overline{\AxPCL}'$ and the inductive hypothesis, the statement $P((s_{i}\cdot t_1)\cdot s_{i}', t_1)$ holds true in this case. If $s_i= s_{i-1}\cdot t$ where $t$ and $t_1$ represent commutative elements in $G$, then we have \[(s_{i}\cdot t_1)\cdot s_{i}'=(s_{i-1}\cdot t_1)\cdot s_{i-1}'\] assuming $\Gr$, thus $P((s_{i}\cdot t_1)\cdot s_{i}', t_1)$ also holds true. 

    By taking $i=n$, we have $P(t_1,t_1)$, which contradicts to the irreflexivity axiom in $\overline{\AxPCL}'$. Therefore $\Gr\cup\Ax_R\cup\overline{\AxPCL}'\cup \PCB\cup\{P((t_2 \cdot t_1)\cdot t_2',t_1)\}$ is inconsistent if $\bar{t}_2^{-1}$ is in the monoid generated by $\bar{t}_2$ and the centraliser of $\bar{t}_1$.

    Next, we prove the ``only if'' part. Suppose that $\bar{t}_2^{-1}$ is not in the monoid generated by $\bar{t}_2$ and the centraliser of $\bar{t}_1$. For any $x, y\in G$, let $P(x, y)$ be the proposition that there exist $z_1, z_2\in G$, such that:
    \begin{enumerate}[label=(\alph*)]
        \item $x= z_1 z_2 \bar{t}_2 \bar{t}_1 \bar{t}_2^{-1} z_2^{-1} z_1^{-1}$, 
        \item $y=z_1 \bar{t}_1 z_1^{-1}$, and
        \item $z_2$ is in the monoid generated by $\bar{t}_2$ and the centraliser of $\bar{t}_1$.
    \end{enumerate}
    We prove that the group $G$ together with the predicate $P$ constitutes a model for $\Gr\cup \Ax_R \cup \overline{\AxPCL}' \cup \PCB \cap\{P((t_2\cdot t_1) \cdot t'_2,t_1)\}$:    \begin{enumerate}[label=(\alph*)]
        \item The axioms in $\Gr\cup Ax_R$ are satisfied because $G$ is the group with presentation $\langle S|R\rangle$. 
        \item If $P(x,x)$ holds for some $x\in G$, then there exists $z_2$ in the monoid generated by $\bar{t}_2$ and the centraliser of $\bar{t}_1$, such that \[z_2 \bar{t}_2 \bar{t}_1 \bar{t}_2^{-1} z_2^{-1} =  z_1^{-1} x z_1=\bar{t}_1.\] In this case, the element $\bar{t}_2^{-1} z_2^{-1}$ is in the centraliser of $\bar{t_1}$. Since a monoid is closed under multiplication by definition, the element $\bar{t}_2^{-1}$ is in the monoid generated by $\bar{t}_2$ and the centraliser of $\bar{t}_1$, which contradicts to the assumption. Thus the the irreflexivity axiom in $\overline{\AxPCL}'$ is satisfied. 
        \item If $P(x,y)$ and $P(y,z)$ holds for $x,y\in G$, then there exist $z_1,z_2,z_3,z_4\in G$, such that \[x=z_1 z_2 \bar{t}_2\bar{t}_1\bar{t}_2^{-1} z_2^{-1} z_1^{-1},\]
        \[y=z_1 \bar{t}_1 z_1^{-1} = z_3 z_4 \bar{t}_2\bar{t}_1\bar{t}_2^{-1} z_4^{-1} z_3^{-1},\]
        \[z=z_3 \bar{t}_1 z_3^{-1},\]
        and $z_2$ and $z_4$ are in the monoid generated by $\bar{t}_2$ and the centraliser of $\bar{t}_1$. By the second equality, the element $\bar{t}_2^{-1} z_4^{-1} z_3^{-1} z_1$ is in the centraliser of $\bar{t}_1$. Since a monoid is closed under multiplication by definition, the element \[z_3^{-1} z_1 z_2 =z_4 \bar{t}_2(\bar{t}_2^{-1} z_4^{-1} z_3^{-1} z_1)z_2\] is in the monoid generated by $\bar{t}_2$ and the centraliser of $\bar{t}_1$. The elements $z_3, z_3^{-1} z_1 z_2 \in G$ satisfy the conditions in the definition of $P(x,z)$, hence $P(x,z)$ holds true. Thus the transitivity axiom in $\overline{\AxPCL}'$ is satisfied.
        \item If $P(x,y)$ holds for some $x,y\in G$, then there exist $z_1, z_2\in G$ such that the conditions in the definition of $P(x,y)$ are satisfied. Then $z z_1, z_2 \in G$ satisfy the conditions in the definition of $P(zxz^{-1},y)$, hence $P(zxz^{-1},y)$. Thus the conjugacy invariance axiom $\PCB$ is satisfied.
        \item The axiom $P((t_2 \cdot t_1)\cdot t_2', t_1)$ is satisfied by taking $z_1=z_2=e$ in the definition of $P$.
    \end{enumerate}
    Therefore the theory $\Gr\cup \Ax_R \cup \overline{\AxPCL}' \cup \PCB \cap\{P((t_2\cdot t_1) \cdot t'_2,t_1)\}$ is consistent if $\bar{t}_2^{-1}$ is not in the monoid generated by $\bar{t}_2$ and the centraliser of $\bar{t}_1$.

\end{proof}

\subsection{Examples}
\begin{example}
    Consider the Fibonacci group $F(2,n)$ $(n\ge 2)$ with the presentation a shown in Example~\ref{ex:Fibonacci}. According to \cite[Theorem 5.2]{motegi2017generalized}, the element $a_0$ is a generalised torsion. To prove this fact automatically when $n=11$ or $n=12$, we can verify the inconsistency (Task 11.1 and Task 11.2) of $\Gr\cup \Ax_R\cup\AxPC'\cup \PB\cup\{P(a_0)\}$ using Prover9, and then apply Proposition~\ref{prop-generalised-torsion}.
\end{example}

\begin{example}
    In Example~\ref{ex:sl2-weak}, we established the absence of bi-invariant circular orders via weakened theories. In fact, the axiom of cyclicity is redundant in the automated proof. By Prover9, we can prove that $\Gr\cup \Ax_R\cup \overline{\AxPCL}'\cup \PCB\cup\{P(a\cdot b,b\cdot a)\}$ is inconsistent (Task 12.1). Thus by Proposition~\ref{prop-CBO-monoid}, the element $a^{-1}$ is in the monoid generated by $a$ and the centraliser of $ba$. Alternatively, this condition follows from the relation $a^{-1}=a^3$.
\end{example}

\begin{example}
In Example~\ref{ex:Poincare}, we established the non-left-orderability of the fundamental group of the Poincar\'e homology sphere via weakened theories. By Proposition~\ref{prop-torsion}, the inconsistency of the weakened theory implies that $a$ is a torsion. Alternatively, it follows from that this group is finite.
\end{example}

\begin{example}
    In Example~\ref{ex:5_2}, we established the non-bi-orderability of the knot group of $5_2$ via weakened theories in two ways (the inequalities $ba\neq ab^2$ and $ba\neq ab$). By Proposition~\ref{prop-generalised-torsion}, it follows that $a^{-1}b^{-1}ab^2$ and $b^{-1}a^{-1}ba$ are generalised torsions. Note that the latter one was previously discovered in \cite[Theorem 7]{naylor2016generalized}.
\end{example}

\begin{example}
    In Example~\ref{ex:weeks}, we established the absence of bi-invariant circular orders via weakened theories. In this example, the axiom of cyclicity is also redundant. We can verify the inconsistency of $\Gr\cup \Ax_R\cup \overline{\AxPCL}'\cup \PCB\cup\{P(a\cdot b,b\cdot a)\}$ (Task 15.1) using Prover9. Thus by Proposition~\ref{prop-CBO-monoid}, the element $a^{-1}$ is in the monoid generated by $a$ and the centraliser of $ba$. Alternatively, this condition follows from the relations $b^{-1} = a(ba)^{-1}$ and $a^{-1}= b^{-3} a b^{-1} a^3b^{-1}$.
\end{example}

\section{Relative convexity and strength of cyclicity axiom}\label{sec:convexity}
In this section, we further explore the consistency of the theory regarding bi-invariant circular orders when the axiom of cyclicity is removed. We do not weaken the axiom of connectedness.

To state our criterion, we introduce the concept of left relative convexity. In contrast to traditional usage (relative convexity of left-orderable groups), we do not restrict the ambient group to a left-orderable group. Thus we adopt the following definition introduced by Antol{\i}n, Dicks, and Sunic \cite{antolin2015left}. The proof of equivalence of definitions could be found in \cite[Lemma 2.1]{antolin2021space}.

\begin{definition}\label{def-rel-convex}
    Let $G$ be a group and $H$ be a subgroup of $G$. We say $H$ is left relatively convex in $G$ when any of the following equivalent conditions hold.
    \begin{enumerate}[label=(\alph*)]
        \item There exists a $G$-invariant order on the left $G$-set $G/H$.
        \item There exists a subsemigroup $P$ of $G$ such that $P\sqcup H\sqcup P^{-1}$ is a partition of $G$, and $H P H\subseteq P$.
    \end{enumerate}
\end{definition}

A total preorder $\le$ on a group $G$ is called a \emph{left total preorder} if it is invariant under multiplication. A subset $S$ of $G$ is called \emph{convex} relative to the left total preorder $\le$ if $x,z\in S$ and $x\le y\le z$ implies $y\in S$. 

Based on the condition (b) in Definition~\ref{def-rel-convex}, we establish the following equivalent conditions for a subgroup to be left relatively convex. By the equivalence (a) $\Leftrightarrow$ (c) in the next proposition, the definition of left relative convexity serves as a left total preorder adaptation of the concept of relative convexity in left-orderable groups.

\begin{proposition}\label{prop-rel-convex-1}
    Let $G$ be a group and $H$ be a subgroup of $G$. The following statements are equivalent.
    \begin{enumerate}[label=(\alph*)]
        \item The subgroup $H$ is left relatively convex in $G$.
        \item The subgroup $H$ is the residue group $\{x\in G: e\le x\le e\}$ of some left total preorder $\le$ on $G$. 
        \item The subgroup $H$ is convex relative to some left total preorder $\le$ on $G$.
    \end{enumerate}
\end{proposition}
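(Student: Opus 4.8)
The plan is to prove the equivalence by the cycle (a) $\Rightarrow$ (b) $\Rightarrow$ (c) $\Rightarrow$ (a). For (a) $\Rightarrow$ (b) I would exploit condition (b) of Definition~\ref{def-rel-convex}, which furnishes a subsemigroup $P$ with $G = P \sqcup H \sqcup P^{-1}$ and $HPH \subseteq P$. Define a relation by $x \le y$ if and only if $x^{-1}y \in H \cup P$. Reflexivity is immediate from $e \in H$, and left-invariance is automatic since $(zx)^{-1}(zy) = x^{-1}y$. Transitivity reduces to the four-case check that $(H\cup P)(H\cup P)\subseteq H\cup P$, each case handled by the subsemigroup property, the subgroup property, or $HPH\subseteq P$ (using $e\in H$ to absorb the outer factors, so that $HP, PH \subseteq HPH$). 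Totality follows because the partition $G = P \sqcup H \sqcup P^{-1}$ forces $x^{-1}y$ either into $H\cup P$ (giving $x\le y$) or into $P^{-1}$ (giving $y\le x$). Finally the residue group $\{x : e\le x\le e\}$ equals $(H\cup P)\cap(H\cup P^{-1})$, which collapses to $H$ because $P$, $H$, $P^{-1}$ are pairwise disjoint.

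The implication (b) $\Rightarrow$ (c) is essentially free: if $H$ is the residue group of $\le$, then for $x,z\in H$ and $x\le y\le z$ we obtain $e\le x\le y\le z\le e$, hence $e\le y\le e$ and $y\in H$, so $H$ is convex relative to the very same preorder.

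The substance of the argument is (c) $\Rightarrow$ (a), which I expect to be the main obstacle. Suppose $H$ is convex relative to a left total preorder $\le$, and write $x\sim y$ for $x\le y\le x$. First I would record two structural facts: $H$ is a union of $\sim$-classes (if $y\in H$ and $y'\sim y$, apply convexity with the bounds $x=z=y$), and consequently each coset $aH$ is convex and is itself a union of $\sim$-classes (left-translate by $a$, using left-invariance of both $\le$ and $\sim$). The crux is a separation lemma: two distinct cosets $aH$ and $bH$ are disjoint unions of classes, so every cross comparison is strict, and I claim they all point the same way. If instead $ah_1 < bk_1$ while $bk_2 < ah_2$, then comparing $bk_2$ with $ah_1$ and invoking convexity of $aH$ in one sub-case, or of $bH$ in the other, would place an element of one coset inside the other, contradicting disjointness. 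Granting this, I define $aH \prec bH$ exactly when some (equivalently every) pair of representatives satisfies $a < b$; separation makes this well defined and total, and left-invariance of $\le$ makes it invariant under the $G$-action, yielding condition (a) of Definition~\ref{def-rel-convex}.

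The delicate point throughout is the bookkeeping between the preorder on $G$ and the genuine linear order it induces on cosets: one must ensure that the equivalence $\sim$ never splits a coset, which is precisely what the two union-of-classes observations guarantee and what makes the separation lemma go through.
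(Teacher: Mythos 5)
Your proposal is correct. The implications (a) $\Rightarrow$ (b) and (b) $\Rightarrow$ (c) coincide with the paper's argument (the same order $x\le y \Leftrightarrow x^{-1}y\in P\cup H$, and the same observation that a residue group is automatically convex), but your proof of (c) $\Rightarrow$ (a) takes a genuinely different route. The paper never leaves $G$: from a preorder $\le$ making $H$ convex it forms $P=\{x\in G\setminus H: e\le x\}$ and checks by element-wise computation that $P$ is a subsemigroup with $G=P\sqcup H\sqcup P^{-1}$ and $HPH\subseteq P$, i.e.\ it verifies condition (b) of Definition~\ref{def-rel-convex}. You instead verify condition (a) of that definition: using the facts that $H$, and hence every coset $aH$, is convex and a union of $\sim$-classes, your separation lemma shows that all comparisons between two distinct cosets are strict and point the same way, so the preorder descends to a $G$-invariant total order $\prec$ on $G/H$. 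Both arguments are sound; the paper's check is shorter and purely algebraic, in the spirit of the positive-cone translations used elsewhere in the paper, while yours is more geometric and produces exactly the ordered $G$-set $(G/H,\prec)$ that underlies the dynamical viewpoint and the implication (a) $\Rightarrow$ (b) of Proposition~\ref{prop-rel-convex-2}. Two points to tighten in a final write-up: since the preorder may be nontrivial inside a single coset (condition (c) does not force $H$ to be the residue group), define $\prec$ by the ``every pair of representatives'' clause, or only between distinct cosets --- under the ``some pair'' reading one could wrongly obtain $aH\prec aH$; and spell out transitivity of $\prec$, which uses the separation lemma once more to rule out $aH=cH$ when $aH\prec bH\prec cH$ (routine, but it is part of checking that $\prec$ is an order).
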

\begin{proof}
For the implication (a) $\Rightarrow$ (b), let $P$ be the subsemigroup described in the condition (b) in Definition~\ref{def-rel-convex}. Define the binary relation $\le$ on $G$ by $x\le y$ if and only if $x^{-1}y\in P\cup H$. Then we can check that $\le$ is a left total preorder on $G$: the reflexivity follows from $e\in H$, the transitivity follows from $PP \cup PH\cup HP \subseteq P$ and $HH\subseteq H$, the totality follows from \[(P\cup H)\cup(P\cup H)^{-1} = P\cup H\cup P^{-1}=G,\]
and the left-invariance follows from the definition of $\le$. Now we prove that the residue group of $\le$ is $H$. The inequality $e\le x\le e$ holds if and only if both $x$ and $x^{-1}$ are in $P\cup H$, which holds if and only if \[x\in (P\cup H)\cap(P\cup H)^{-1} = (P\cap P^{-1})\cup H=H.\] 

The implication (b) $\Rightarrow$ (c) holds because the residue group is always convex by definition.

For the implication (c) $\Rightarrow$ (a), suppose that $\le$ is a left total preorder on $G$ relative to which $H$ is convex. Let $P$ denote the set $\{x\in G\setminus H: e\le x\}$. We prove that $P$ satisfies the condition (b) in Definition~\ref{def-rel-convex}.
\begin{enumerate}[label=(\alph*)]
    \item For any $x,y\in P$, by the transitivity and the left-invariance of $\le$, we have $e\le x\le xy$. Then by the convexity of $\le$, we have $xy\in G\setminus H$, hence we have $xy\in P$. Therefore $P$ is a subsemigroup.
    \item Because $P$ is a subsemigroup, $H$ is a subgroup, and $P\cap H=\emptyset$, the subsets $P$, $H$, and $P^{-1}$ are disjoint.
    \item For any $x\in G\setminus H$, by the totality and the left-invariance of $\le$, either $e\le x$ or $e\le x^{-1}$ holds. Since $H$ is a subgroup, either $x\in P$ or $x\in P^{-1}$ holds. Thus we have $G=P\sqcup H\sqcup P^{-1}$.
    \item Suppose that $x\in H$ and $y\in P$. If $xy\in H$, then $y=x^{-1}(xy)\in P\cap H$. If $xy\in P^{-1}$, then $x=(xy)y^{-1}\in H\cap P^{-1}$. Because $G=P\sqcup H\sqcup P^{-1}$, we have $xy\in P$. Thus we have $HP\subseteq P$. For the same reason, we have $PH\subseteq P$. Therefore $HPH\subseteq PH\subseteq P$.
\end{enumerate}
\end{proof}

Next, we establish some equivalent statements for the condition (a) in Definition~\ref{def-rel-convex}. Note that the equivalence (a) $\Leftrightarrow$ (b) in the following proposition generalises \cite[Theorem 1.4.10]{clay2010space}.

\begin{proposition}\label{prop-rel-convex-2}
    Let $G$ be a group and $H$ be a subgroup of $G$. The following statements are equivalent.
    \begin{enumerate}[label=(\alph*)]
        \item The subgroup $H$ is left relatively convex in $G$.
        \item The subgroup $H$ is a kernel of an order preserving $G$-action on some totally ordered set $(\Omega,<)$.
        \item For any finite set of elements $g_1,\ldots, g_n\in G\setminus H$, there exist $\varepsilon_1,\ldots,\varepsilon_n\in\{-1,1\}$ such that the subsemigroup generated by $H g_1^{\varepsilon_1}H,\ldots,Hg_n^{\varepsilon_n}H$ does not contain the identity element $e$.
        \item For any finite set of elements $g_1,\ldots, g_n\in G\setminus H$, there exist $\varepsilon_1,\ldots,\varepsilon_n\in\{-1,1\}$ such that the subsemigroup generated by $g_1^{\varepsilon_1},\ldots,g_n^{\varepsilon_n}$ has empty intersection with $H$.
    \end{enumerate}
\end{proposition}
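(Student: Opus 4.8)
The plan is to establish the cycle of implications (a)$\,\Rightarrow\,$(b)$\,\Rightarrow\,$(c)$\,\Rightarrow\,$(d)$\,\Rightarrow\,$(a), which renders all four conditions equivalent. The first three arrows are routine, and essentially all of the content lies in the closing arrow (d)$\,\Rightarrow\,$(a).

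For (a)$\,\Rightarrow\,$(b), I would use condition (a) of Definition~\ref{def-rel-convex} to equip $\Omega=G/H$ with a $G$-invariant total order, and let $G$ act by left translation; this action preserves the order and the stabiliser of the base coset $eH$ is exactly $H$. For (b)$\,\Rightarrow\,$(c), fix a point $\omega_0$ with stabiliser $H$; given $g_1,\dots,g_n\in G\setminus H$, choose $\varepsilon_i$ with $g_i^{\varepsilon_i}\omega_0>\omega_0$. Since $H$ fixes $\omega_0$ and the action is order-preserving, every element of $Hg_i^{\varepsilon_i}H$ pushes $\omega_0$ strictly upward, and an easy induction shows that any product of such elements does too, so the generated subsemigroup omits $e$. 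For (c)$\,\Rightarrow\,$(d), observe that the subsemigroup generated by the $g_i^{\varepsilon_i}$ sits inside the one generated by the double cosets $Hg_i^{\varepsilon_i}H$; were some $h\in H$ in the former, left-multiplying the witnessing product by $h^{-1}\in H$ would place $e$ in the double-coset subsemigroup, contradicting (c).

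The hard part is (d)$\,\Rightarrow\,$(a), for which I would argue by compactness. By condition (b) of Definition~\ref{def-rel-convex} it suffices to build a subsemigroup $P$ with $HPH\subseteq P$ and $G=P\sqcup H\sqcup P^{-1}$; such a $P$ is necessarily a union of non-trivial $(H,H)$-double cosets, one from each inverse pair. I would therefore search the compact product space of sign assignments $\sigma$ to inverse-pairs of double cosets, subject to $\sigma(D^{-1})=-\sigma(D)$, where the demand that the positive part be a subsemigroup disjoint from $H$ becomes a family of closed constraints, each mentioning only finitely many double cosets. By Tychonoff's theorem a global consistent $\sigma$ exists as soon as every finite subfamily is satisfiable, and one would extract such a finite assignment from the sign choice furnished by condition (d) for the finitely many group elements in play.

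The main obstacle is precisely this finite consistency step. Condition (d) governs subsemigroups generated by \emph{elements} and merely avoiding $H$, whereas a positive cone must be closed under the full two-sided action of $H$, so one must promote an element-level sign choice to a double-coset-level one without manufacturing the identity; in effect this amounts to proving the finite version of (d)$\,\Rightarrow\,$(c). I would handle it by restricting to the finitely generated subgroup spanned by the elements involved together with its intersection with $H$, and I expect that reconciling the two-sided $H$-invariance of a cone with the purely element-wise data of (d) will be the most delicate point. Once that bridge is in place, compactness together with Definition~\ref{def-rel-convex}(b) completes the implication and hence the proposition.
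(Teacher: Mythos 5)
Your cycle (a)$\Rightarrow$(b)$\Rightarrow$(c)$\Rightarrow$(d)$\Rightarrow$(a) is the same decomposition the paper uses, and your (a)$\Rightarrow$(b) and (c)$\Rightarrow$(d) arguments match the paper's. But your (b)$\Rightarrow$(c) begins with ``fix a point $\omega_0$ with stabiliser $H$,'' which is not available: condition (b) only makes $H$ the \emph{kernel} of the action, i.e.\ the intersection of all point stabilisers, and there need not exist a single point whose stabiliser is $H$, nor even one moved simultaneously by all of $g_1,\dots,g_n$. This gap is repairable: pass to the lexicographic power $\Omega^n$ with the diagonal $G$-action (which is again order preserving), and take $\omega_0=(\omega_1,\dots,\omega_n)$ where $\omega_i$ is a point moved by $g_i$; then $H$ fixes $\omega_0$, each $g_i$ moves it, and the rest of your argument goes through. (The paper avoids the issue by quoting this implication from Tararin.)

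The genuine gap is (d)$\Rightarrow$(a), which carries essentially all of the content of the proposition and which you explicitly leave open. In your compactness scheme over sign assignments to inverse pairs of \emph{double cosets}, the finite-satisfiability step is exactly the finite form of (d)$\Rightarrow$(c) --- promoting element-level sign choices to $H$-bi-invariant double-coset-level ones --- and you only say you ``expect'' this reconciliation can be done; restricting to a finitely generated subgroup does not help, since its intersection with $H$ and the relevant double cosets remain infinite. As written, the proposal therefore contains no proof of this implication (the paper outsources it to Chiswell's Lemma~8, resp.\ Glass's Lemma~2.2.3). The repair is to run the compactness argument at the level of \emph{elements}, where your troublesome bridge evaporates. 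For $\sigma\in\{\pm1\}^{G\setminus H}$ and finite $F\subseteq G\setminus H$, let $C_F$ be the closed set of those $\sigma$ for which the subsemigroup generated by $\{g^{\sigma(g)}:g\in F\}$ is disjoint from $H$. Condition (d) says each $C_F$ is nonempty, and $C_{F_1\cup\dots\cup F_k}\subseteq C_{F_1}\cap\dots\cap C_{F_k}$ gives the finite intersection property, so Tychonoff yields $\sigma\in\bigcap_F C_F$. Put $P=\sigma^{-1}(+1)$. Exactly as in the classical orderability argument, $P$ is a subsemigroup with $P\cap H=\emptyset$ and $G=P\sqcup H\sqcup P^{-1}$. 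The point you were missing is that $HPH\subseteq P$ is then automatic: if $g\in P$, $h\in H$ and $\sigma(hg)=-1$, the subsemigroup generated by $g$ and $(hg)^{-1}$ contains $g\cdot(hg)^{-1}=h^{-1}\in H$, contradicting $\sigma\in C_{\{g,hg\}}$; right multiplication is symmetric, via $(gh)^{-1}\cdot g=h^{-1}$. Now Definition~\ref{def-rel-convex}(b) gives (a). With these two repairs your route becomes a complete, self-contained alternative to the paper's citation-based proof.
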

\begin{proof}
For the implication (a) $\Rightarrow$ (b), suppose that $H$ is left relatively convex in $G$. By the condition (a) in Definition~\ref{def-rel-convex}, there exists a $G$-invariant order $<$ on the left $G$-set $G/H$. And the subgroup $H$ is the kernel of the order preserving $G$-action on the totally ordered set $(G/H,<)$.

The implication (b) $\Rightarrow$ (c) was proved by Tararin; see \cite[Proposition 5.1.5]{kopytov1996right}.

The implication (c) $\Rightarrow$ (d) holds because $H$ is a subgroup: if $h\in H$ is in the subsemigroup generated by $g_1^{\varepsilon_1},\ldots, g_n^{\varepsilon_n}$, then $e$ is in the subsemigroup generated by $g_1^{\varepsilon_1},\ldots, g_n^{\varepsilon_n}$ and $h^{-1} g_1^{\varepsilon_1},\ldots, h^{-1} g_n^{\varepsilon_n}$.

The implication (d) $\Rightarrow$ (a) can be deduced from \cite[Lemma 8]{chiswell1993soluble}; see also \cite[Lemma 2.2.3]{glass1999partially}.
\end{proof}

Finally, we prove that the set of left relatively convex subgroups is closed under arbitrary intersection, generalising \cite[Proposition 5.1.10]{kopytov1996right}, the same property for relatively convex subgroups of a left-orderable group.

\begin{proposition}\label{convex-intersection}
    The intersection of left relatively convex subgroups of $G$ is left relatively convex in $G$.
\end{proposition}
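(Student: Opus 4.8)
The plan is to prove the statement through the order-theoretic characterization, namely condition (a) in Definition~\ref{def-rel-convex}, rather than the positive-cone description (b) or the finitary conditions of Proposition~\ref{prop-rel-convex-2}, because orders combine cleanly under products whereas positive cones and sign choices do not. Write $H = \bigcap_{\alpha \in A} H_\alpha$, where each $H_\alpha$ is left relatively convex in $G$, and use condition (a) to pick for each $\alpha$ a $G$-invariant order $<_\alpha$ on the left $G$-set $G/H_\alpha$. The goal is to manufacture a single $G$-invariant order on $G/H$ out of this family.

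First I would fix a well-order on the index set $A$ and form the product $G$-set $X = \prod_{\alpha \in A} G/H_\alpha$, equipped with the diagonal left action $g \cdot (x_\alpha)_\alpha = (g\cdot x_\alpha)_\alpha$ and with the lexicographic order: for two distinct tuples, compare them at the least index (with respect to the fixed well-order on $A$) at which they differ. Next I would check that this lexicographic order is a strict total order and that it is $G$-invariant; invariance holds because left translation by $g$ is a bijection of each factor $G/H_\alpha$, so it preserves the set of indices at which two tuples disagree, hence the least such index, and $<_\alpha$ is invariant on that factor.

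Then I would consider the orbit map $\phi \colon G/H \to X$ given by $gH \mapsto (gH_\alpha)_\alpha$. This map is well-defined and $G$-equivariant because $H \subseteq H_\alpha$ for every $\alpha$, and — crucially — it is injective precisely because $\bigcap_{\alpha} H_\alpha = H$: if $gH_\alpha = g'H_\alpha$ for all $\alpha$, then $g^{-1}g' \in \bigcap_{\alpha} H_\alpha = H$, so $gH = g'H$. Pulling back the lexicographic order along this injection yields a strict total order on $G/H$, which is $G$-invariant since $\phi$ is equivariant and the lexicographic order is invariant. By condition (a) of Definition~\ref{def-rel-convex}, this shows that $H$ is left relatively convex in $G$.

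The routine verifications (transitivity and totality of the lexicographic order) I would leave to the reader. The step deserving the most care, and where the hypotheses genuinely enter, is the pairing of two facts: the $G$-invariance of the lexicographic order, which requires the well-ordering of $A$ so that ``least index of disagreement'' is meaningful for an arbitrary, possibly infinite, family; and the injectivity of $\phi$, which is exactly the translation of ``$H$ is the intersection'' into the statement that the diagonal embedding separates distinct cosets. Since no finiteness assumption on $A$ is used anywhere, this argument handles arbitrary intersections, as required.
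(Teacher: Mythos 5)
Your proof is correct, but it follows a genuinely different route from the paper's. The paper first passes through Proposition~\ref{prop-rel-convex-2}: each $H_\alpha$ is realised as the kernel (point stabiliser) of an order-preserving $G$-action on some totally ordered set $(\Omega_\alpha,<_\alpha)$, and these actions are then assembled on the \emph{disjoint union} of the $\Omega_\alpha$, ordered as an ordered sum with respect to an arbitrary total order on the index set; the intersection $\bigcap_\alpha H_\alpha$ is then identified with the kernel of the assembled action, and the implication (b) $\Rightarrow$ (a) of that proposition concludes. You instead work directly from condition~(a) of Definition~\ref{def-rel-convex}: you order the \emph{product} $\prod_\alpha G/H_\alpha$ lexicographically via a well-order on the index set, and pull this order back along the injective, $G$-equivariant diagonal map $G/H \to \prod_\alpha G/H_\alpha$. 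Comparing the two: the paper's ordered-sum argument is shorter (granted Proposition~\ref{prop-rel-convex-2}) and needs only a total order, not a well-order, on the index set, whereas your argument costs the well-ordering theorem plus the routine lexicographic verifications, but it is self-contained and makes the role of the hypothesis transparent --- ``$H$ is the intersection'' enters exactly as injectivity of the diagonal map. Your construction is also arguably the more robust one: in a disjoint union no single point has stabiliser $\bigcap_\alpha H_\alpha$ (every point lies in one $\Omega_\alpha$, and its stabiliser is governed by that factor alone), so realising the intersection as a stabiliser/kernel in the sense of Proposition~\ref{prop-rel-convex-2}(b) really calls for a product with a basepoint tuple --- which is precisely what you built.
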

\begin{proof}
Let $\left\{H_\alpha: \alpha \in I\right\}$ be a family of left relatively convex subgroups of $G$. By the implication (a) $\Rightarrow$ (b) in Proposition \ref{prop-rel-convex-2}, there exist totally ordered sets $(\Omega_\alpha,<_\alpha)$ ($\alpha\in I$) such that $H_\alpha$ is the kernel of an order preserving $G$-action on $(\Omega_\alpha,<_\alpha)$. 

Let $\Omega$ denote the disjoint union of all $\Omega_\alpha$ ($\alpha\in I$). We choose an arbitrary total order $<_{\mbox{index}}$ on $I$, then define $x<y$ for some $x\in \Omega_\alpha$ and $y\in \Omega_\beta$ if and only if either $\alpha<_{\mbox{index}}\beta$, or $\alpha=\beta$ and $x<_\alpha y$. In this way, the binary relation $<$ be a total order on $\Omega$ such that each inclusion map $\Omega_\alpha\hookrightarrow \Omega$ is order preserving. Therefore, the natural $G$-action on $\Omega$ is order preserving. Since the kernel of this group action is $\bigcap_{\alpha \in I} H_\alpha$, by the implication (b) $\Rightarrow$ (a) in Proposition \ref{prop-rel-convex-2}, the statement holds true.
\end{proof}

Now we state the criterion for the existence of a predicate satisfying all axioms for a bi-invariant circular order, except for the axiom of cyclicity.

\begin{proposition}
    Let $\overline{\AxPCL}$ denote $\AxPCL$ minus the axiom of cyclicity. There exists a binary relation $P(\cdot,\cdot)$ on the group $G$ satisfying the axioms in $\overline{\AxPCL}\cup\PCB$ if and only if the centraliser of each subset of $G$ is left relatively convex in $G$.
\end{proposition}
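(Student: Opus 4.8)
The plan is to first translate the logical statement into group theory, then reduce to centralisers of single elements, and finally treat the two implications separately. The key observation is that a binary relation $P$ satisfying $\overline{\AxPCL}$, when restricted to $G\setminus\{e\}$, is exactly a strict total order: irreflexivity together with transitivity forces asymmetry, while the connectedness axiom (whose disjuncts $e=x$ and $e=y$ simply exclude the identity) supplies totality on $G\setminus\{e\}$. The axiom $\PCB$ then says precisely that this order is invariant under conjugation, since for $x,y\neq e$ the conjugates $zxz'$, $zyz'$ are again nontrivial. Conversely, any conjugation-invariant strict total order on $G\setminus\{e\}$ yields such a $P$ by setting $P(x,y)$ false whenever $x=e$ or $y=e$, which is readily checked against all of $\overline{\AxPCL}\cup\PCB$. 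Hence the existence of $P$ is equivalent to the existence of a conjugation-invariant strict total order on $G\setminus\{e\}$.

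Before handling the two directions I would record the reduction to single elements. Since the centraliser of a subset $S$ equals $\bigcap_{s\in S}C_G(s)$ and $C_G(e)=G$, Proposition~\ref{convex-intersection} shows that the centraliser of every subset of $G$ is left relatively convex if and only if the centraliser $C_G(g)$ of every $g\in G\setminus\{e\}$ is left relatively convex. Thus it suffices to relate the order $P$ to the convexity of the individual $C_G(g)$.

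For the implication that existence of $P$ forces convexity, I would fix $g\neq e$ and use the bijection $G/C_G(g)\to\mathrm{Cl}(g)$ sending $xC_G(g)\mapsto xgx'$ onto the conjugacy class of $g$, under which the left-translation $G$-action on $G/C_G(g)$ corresponds to the conjugation action on $\mathrm{Cl}(g)$. As $\mathrm{Cl}(g)\subseteq G\setminus\{e\}$, restricting the conjugation-invariant order $P$ to $\mathrm{Cl}(g)$ and transporting it along this bijection produces a $G$-invariant total order on $G/C_G(g)$; by condition (a) of Definition~\ref{def-rel-convex} this makes $C_G(g)$ left relatively convex. The case $g=e$ gives $C_G(e)=G$, which is trivially convex, and the general subset case follows from the reduction above. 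For the converse, I would glue the per-class orders. Conjugation stabilises each conjugacy class setwise, so it is enough to order each class in a conjugation-invariant manner and then order the classes arbitrarily among themselves. For each class I pick a representative $g$; left relative convexity of $C_G(g)$ provides, again via condition (a) of Definition~\ref{def-rel-convex} and the same bijection, a conjugation-invariant total order on $\mathrm{Cl}(g)$. Using the axiom of choice to select one such order per class and any total order on the set of classes, I define $x\prec y$ to hold when $x,y$ lie in a common class with $x$ preceding $y$ there, or in distinct classes with the class of $x$ preceding that of $y$. This is a strict total order on $G\setminus\{e\}$, and it is conjugation-invariant because conjugation fixes each class and respects the chosen intra-class orders; the associated $P$ witnesses consistency.

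The main obstacle is the bridge used in both directions: recognising that the restriction of $P$ to a single conjugacy class carries exactly the same information as a $G$-invariant order on the coset space $G/C_G(g)$, through the identification of the conjugation action with left translation. Once this transport of structure is established, the forward direction is immediate and the converse reduces to a gluing argument whose success hinges on the simple but essential fact that conjugation leaves every conjugacy class invariant, so that no compatibility condition across classes can obstruct the construction.
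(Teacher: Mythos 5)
Your proposal is correct, and its skeleton matches the paper's proof: both directions rest on relating $P$, restricted to a single conjugacy class, to a $G$-invariant order on the coset space of the corresponding centraliser; both reduce from subsets to single elements via Proposition~\ref{convex-intersection}; and both build the converse by gluing per-class orders lexicographically over an arbitrarily ordered set of conjugacy classes. Where you genuinely differ is in the machinery implementing that correspondence. The paper never mentions the coset space explicitly: in the ``only if'' direction it constructs, for each $x$, an explicit left total preorder $\le_x$ on $G$ (namely $y\le_x z$ iff $y^{-1}z$ commutes with $x$ or $P(yxy^{-1},zxz^{-1})$ holds), verifies the preorder axioms and that its residue is the centraliser of $x$, and then invokes Proposition~\ref{prop-rel-convex-1}; the ``if'' direction runs the same translation backwards through Proposition~\ref{prop-rel-convex-1} again. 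You bypass preorders entirely, working directly with condition (a) of Definition~\ref{def-rel-convex} and transporting orders along the equivariant bijection $G/C_G(g)\to\mathrm{Cl}(g)$, $xC_G(g)\mapsto xgx^{-1}$, after isolating the clean preliminary observation that, for existence purposes, a relation satisfying $\overline{\AxPCL}\cup\PCB$ is the same thing as a conjugation-invariant strict total order on $G\setminus\{e\}$. Your route is shorter and more structural: transport of structure replaces the paper's hands-on verification of preorder axioms and residue computations, and it makes transparent why the identity element is harmless (the connectedness axiom simply excludes it). What the paper's formulation buys is uniformity with the rest of its development --- the preorder language of Proposition~\ref{prop-rel-convex-1} is exactly what gets reused for cofinality in Section~\ref{sec:cofinality} --- at the cost of longer verifications. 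The two steps you leave as ``readily checked'' (that extending an invariant order on $G\setminus\{e\}$ by declaring $P$ false on all pairs involving $e$ satisfies every axiom, and that the lexicographic gluing is a conjugation-invariant strict total order) are indeed routine.
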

\begin{proof}
    We first prove the ``only if'' part. Suppose that there exists a binary relation $P(\cdot,\cdot)$ on $G$ satisfying the axioms in $\overline{\AxPCL}\cup\PCB$. For each $x\in G$, we define a binary relation $\le_x$ on $G$ by $y\le_x z$ if and only if either $y^{-1}z$ commutes with $x$, or $P(yxy^{-1},zxz^{-1})$ holds true. We prove that $\le_x$ is a left total preorder on $G$ with the residue being the centraliser of $x$.
    \begin{enumerate}[label=(\alph*)]
        \item The reflexivity follows from that $e$ commutes with $x$.
        \item Suppose that $y\le_x z$ and $z\le_x u$. If both $y^{-1}z$ and $z^{-1} u$ commute with $x$, then $y^{-1} u$ also commutes with $x$. We consider four scenarios. If $y^{-1}z$ commutes with $x$ and $P(zxz^{-1},uxu^{-1})$ holds, then by $zxz^{-1}= yxy^{-1}$, we have $P(yxy^{-1},uxu^{-1})$. If $P(yxy^{-1},zxz^{-1})$ holds and $z^{-1}u$ commutes with $x$ then by $zxz^{-1}= uxu^{-1}$, we also have $P(yxy^{-1},uxu^{-1})$. If both $P(yxy^{-1},zxz^{-1})$ and $P(zxz^{-1},uxu^{-1})$ hold true, then by the transitivity axiom in $\overline{\AxPCL}$, we have $P(yxy^{-1},uxu^{-1})$. In either way, we have $z\le_x u$. Thus we proved the transitivity of $\le_x$.
        \item The totality follows from the connectedness axiom in $\overline{\AxPCL}$.
        \item The left-invariance follows from the conjugacy invariance axiom $\PCB$. 
        \item If $y$ is in the centraliser of $x$, then so is $y^{-1}$. By definition, we have $e\le_x y\le_x e$. So the centraliser of $x$ is a subgroup of the residue of $\le_x$.
        \item If $y$ is not in the centraliser of $x$, then neither is $y^{-1}$. If we also have $e\le_x y \le_x e$, then by definition, we have $P(x,yxy^{-1})$ and $P(yxy^{-1},x)$. By the transitivity axiom in $\overline{\AxPCL}$, we have $P(x,x)$, which contradicts to the irreflexivity axiom in $\overline{\AxPCL}$. Thus the residue of $\le_x$ is a subgroup of the centraliser of $x$.
    \end{enumerate}
    By the implication (b) $\Rightarrow$ (a) in Proposition~\ref{prop-rel-convex-1}, the centraliser of any group element is left relatively convex in $G$. By Proposition~\ref{convex-intersection}, the centraliser of any subset is left relatively convex in $G$.
    
    Then we prove the ``if'' part. Suppose that the centraliser of any $x\in G$ is left relatively convex in $G$. Let $\mathrm{Con}(G)$ denote the set of conjugacy classes of $G$. For each conjugacy class $\gamma\in \mathrm{Con}(G)$, select an group element $x_\gamma$ in $\gamma$. By the implication (a) $\Rightarrow$ (b) in Proposition~\ref{prop-rel-convex-1}, there exists a left total preorder $\le_\gamma$ on $G$ such that the residue group of $\le_\gamma$ is the centraliser of $x_\gamma$. We choose an arbitrary total order $<_{\mbox{index}}$ on $\mathrm{Con}(G)$, then define $P(y,z)$ for some $y\in \gamma_1$ and $z\in \gamma_2$ ($\gamma_1, \gamma_2\in \mathrm{Con}$) if and only if either $\gamma_1<_{\mbox{index}}\gamma_2$, or $\gamma_1=\gamma_2$ and the following four conditions hold for some pair $(y_0, z_0)\in G\times G$:
    \begin{enumerate}[label=(\alph*)]
        \item $y= y_0 x_{\gamma_1} y_0^{-1}$.
        \item $z= z_0 x_{\gamma_1} z_0^{-1}$.
        \item $y_0\le_{\gamma_1} z_0$ holds.
        \item $z_0\le_{\gamma_1} y_0$ does not hold.
    \end{enumerate}
    We prove that the binary relation $P(\cdot,\cdot)$ satisfies the axioms in $\overline{\AxPCL}\cup\PCB$.
   \begin{enumerate}[label=(\alph*)]
        \item We first prove the irreflexivity axiom in $\overline{\AxPCL}$. Suppose that $P(y,y)$ for some $y\in G$. By definition, there exists $(y_0,z_0)\in G\times G$ satisfying the four conditions above. Let $\gamma$ denote the conjugacy class of $y$, then by $y= y_0 x_{\gamma} y_0^{-1}=z_0 x_{\gamma} z_0^{-1}$, the element $y_0^{-1} z_0$ is in the centraliser of $x_\gamma$. In this case, we have $z_0\le_{\gamma_1} y_0$, which contradicts to the fourth condition.
        \item Then we prove the transitivity axiom in $\overline{\AxPCL}$. Suppose that both $P(y,z)$ and $P(z,u)$ hold true. Let $\gamma_1, \gamma_2,\gamma_3$ denote the conjugacy classes of $y,z,u$ respectively. By definition, we have $\gamma_1\le_{\mbox{index}}\gamma_2$ and $\gamma_2\le_{\mbox{index}}\gamma_3$. By the transitivity of $<_{\mbox{index}}$, we have $\gamma_1\le_{\mbox{index}}\gamma_3$. If $\gamma_1<_{\mbox{index}}\gamma_3$, then $P(y,u)$ holds true. Otherwise, we have $\gamma_1=\gamma_2=\gamma_3$, and there exist pairs $(y_0,z_0)$ and $(y_1,z_1)\in G\times G$ satisfying the four conditions for $(y,z)$ and $(z,u)$ respectively. We prove that the pair $(y_0,z_1)$ satisfies the four conditions for $(y,u)$. The first two conditions follow from corresponding conditions for $(y,z)$ and $(z,u)$. By $z= z_0 x_{\gamma_1} z_0^{-1}= y_1x_{\gamma_1} y_1^{-1}$, the element $y_1^{-1} z_0$ is in the centraliser of $x_{\gamma_1}$. Thus we have $z_0\le_{\gamma_1} y_1$. By the transitivity of $\le_{\gamma_1}$, we have $y_0 \le_{\gamma_1} z_0\le_{\gamma_1} y_1\le_{\gamma_1} z_1$. Thus the third condition is satisfied. If $z_1\le_{\gamma_1}y_0$ holds, then by the transitivity of $\le_{\gamma_1}$, we have $z_0\le_{\gamma_1} y_1\le_{\gamma_1} z_1\le_{\gamma_1} y_0$, which contradicts to the corresponding conditions for $(y,z)$. Therefore, we have $P(y,u)$ if both $P(y,z)$ and $P(z,u)$ hold true,
        \item Now we prove the connectedness axiom in $\overline{\AxPCL}$. Suppose that $y$ and $z$ are two distinct group elements. Let $\gamma_1$ and $\gamma_2$ denote the conjugacy classes of $y$ and $z$ respectively. If $\gamma_1\neq \gamma_2$, then by the connectedness of $<_{\mbox{index}}$, we have either $\gamma_1<_{\mbox{index}}\gamma_2$ or $\gamma_2<_{\mbox{index}}\gamma_2$, thus we have either $P(y,z)$ or $P(z,y)$. Otherwise, suppose that $\gamma_1= \gamma_2$, then there exists $(y_0,z_0)\in G\times G$ such that $y=y_0 x_{\gamma_1}y_0^{-1}$ and $z=z_0 x_{\gamma_1}z_0^{-1}$. By the totality of $\le_{\gamma_1}$, we have either $y_0\le_{\gamma_1} z_0$ or $z_0\le_{\gamma_1} y_0$. By $y\neq z$, the element $y_0^{-1}z_0$ is not in the centraliser of $x_{\gamma_1}$, thus only one of $y_0\le_{\gamma_1} z_0$ and $z_0\le_{\gamma_1} y_0$ holds.
        \item Finally we prove the conjugacy invariance axiom $\PCB$. Suppose that $P(y,z)$ holds true. Let $\gamma_1$ and $\gamma_2$ denote the conjugacy classes of $y$ and $z$ respectively. Then by definition, either $\gamma_1<_{\mbox{index}} \gamma_2$, or $\gamma_1=\gamma_2$ and there exists $(y_0,z_0)\in G\times G$ satisfying the four conditions for $(y,z)$. For any $u\in G$, we have $uyu^{-1}\in\gamma_1$ and $uzu^{-1}\in \gamma_2$. If $\gamma_1<_{\mbox{index}} \gamma_2$, then we have $P(uyu^{-1},uzu^{-1})$. Otherwise, we prove that $(u y_0,u z_0)\in G\times G$ satisfies the four conditions for $(uyu^{-1}, u z u^{-1})$. The first two conditions follow from corresponding conditions for $(y,z)$. The last two conditions follow from corresponding conditions for $(y,z)$ and the left-invariance of $\le_{\gamma_1}$. Therefore $P(y,z)$ implies $P(uyu^{-1},uzu^{-1})$.
    \end{enumerate}
\end{proof}

\section{Absolute cofinality and non-left-orderability}\label{sec:cofinality}
In this section, we introduce a methodology to integrate the fixed point method for non-left-orderability into automated reasoning. The term ``fixed point method'' originates from the dynamic realisation of a left order. However, instead of introducing the dynamic realisation, we opt for the concept of cofinal elements for convenience, which essentially yields the same proofs.

We define the left absolute cofinality as a dual concept of the left relative convexity. This makes our definition slightly different from traditional usage.

To begin with, we introduce the concept of the left relatively convex subgroup closure and integrate it into automated reasoning.

\subsection{Relatively convex subgroup closure}

By Proposition~\ref{convex-intersection}, the left relatively convex subgroups form a Moore collection. So we can define a natural closure operator $\mathrm{cl}$ based on this family. The following definition extends the definition of the relatively convex subgroup closure in \cite{longobardi2000right} to arbitrary groups.

\begin{definition}\label{def-closure}
    For a subset $A$ in a group $G$, we define the left relatively convex subgroup closure $\mathrm{cl}(A)$ as the intersection of all left relatively convex subgroups $H$ with $A\subseteq H \subseteq G$.
\end{definition}

By Proposition~\ref{convex-intersection}, a subset $A\subseteq G$ is a left relatively convex subgroup of $G$ if and only if $\mathrm{cl}(A)=A$.

\begin{proposition}\label{prop-equiv-cofinal}
    Let $A$ be a subset of the group $G$, and $g\in G$ be a group element. Let $\langle A\rangle$ denote the subgroup generated by $A$. The following statements are equivalent.
    \begin{enumerate}[label=(\alph*)]
        \item The element $g$ is in the left relatively convex subgroup closure $\mathrm{cl}(A)$.
        \item For any subsemigroup $S$ with $A\cup A^{-1}\subseteq S\subseteq G$ and $S\cup S^{-1}=G$, we have $g\in S$.
        \item For any left total preorder $\le$ on $G$, there exists $x\in \langle A\rangle$ such that $g\le x$.
    \end{enumerate}
\end{proposition}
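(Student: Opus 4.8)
The plan is to prove the three statements equivalent by establishing the cycle of implications (a) $\Rightarrow$ (b) $\Rightarrow$ (c) $\Rightarrow$ (a), leveraging the characterisation of left relative convexity from Proposition~\ref{prop-rel-convex-1} and the closure-as-intersection structure given by Proposition~\ref{convex-intersection} and Definition~\ref{def-closure}. The key conceptual link is that subsemigroups $S$ with $A\cup A^{-1}\subseteq S$ and $S\cup S^{-1}=G$ are intimately related to left relatively convex subgroups containing $A$: given such an $S$, the set $H:=S\cap S^{-1}$ should be a left relatively convex subgroup containing $A$, and conversely each left relatively convex subgroup arises (via its associated positive cone $P$ from Definition~\ref{def-rel-convex}(b)) by taking $S=P\sqcup H$.

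For (a) $\Rightarrow$ (b), I would argue contrapositively: suppose $S$ is a subsemigroup with $A\cup A^{-1}\subseteq S\subseteq G$, $S\cup S^{-1}=G$, and $g\notin S$. Setting $H=S\cap S^{-1}$ and $P=S\setminus H$, I would verify that $P\sqcup H\sqcup P^{-1}=G$ and $HPH\subseteq P$, so that $H$ is left relatively convex by Definition~\ref{def-rel-convex}(b); since $A\cup A^{-1}\subseteq S$ forces $A\subseteq H$, we have $A\subseteq H$, and $g\notin S$ gives $g\notin H$, witnessing $g\notin\mathrm{cl}(A)$. The mild technical point here is checking that $H$ really is a subgroup (closure under the group operation follows from $S$ being a subsemigroup closed under taking the intersection with its inverse set) and that $P$ is a subsemigroup disjoint from $H$.

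For (b) $\Rightarrow$ (c), given a left total preorder $\le$ on $G$, I would take $S:=\{x\in G: \exists\,a\in\langle A\rangle,\ x\le a\}$, or more directly the down-set of $\langle A\rangle$, and check it is a subsemigroup containing $A\cup A^{-1}$ with $S\cup S^{-1}=G$; applying (b) yields $g\in S$, i.e.\ $g\le x$ for some $x\in\langle A\rangle$. Here I must use left-invariance of $\le$ to confirm $S$ is closed under multiplication, and totality to confirm $S\cup S^{-1}=G$. For (c) $\Rightarrow$ (a), I would argue contrapositively: if $g\notin\mathrm{cl}(A)$ then some left relatively convex subgroup $H\supseteq A$ omits $g$; by Proposition~\ref{prop-rel-convex-1}(a)$\Rightarrow$(b) there is a left total preorder $\le$ whose residue is $H$, and I must exhibit that for this $\le$ no element $x\in\langle A\rangle$ satisfies $g\le x$. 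Since $\langle A\rangle\subseteq H$ and $H$ is the residue (so all its elements are $\le$-equivalent to $e$), the failure of $g\le x$ for all such $x$ reduces to showing $g\not\le e$, i.e.\ $e<_{\le}g$ strictly, which follows because $g\notin H$ means $g$ is not in the residue.

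The main obstacle I anticipate is the direction (c) $\Rightarrow$ (a), specifically ensuring that the left total preorder supplied by Proposition~\ref{prop-rel-convex-1} genuinely certifies cofinality failure: one must confirm that $\langle A\rangle$ sits inside the residue $H$ (immediate once $A\subseteq H$ and $H$ is a subgroup) and that $g$ lies strictly above the entire residue rather than merely outside it. This requires care with the distinction between the preorder and the induced strict order, and with the fact that cofinality is phrased using $g\le x$ rather than $x\le g$. The other implications are largely a matter of correctly translating between the subsemigroup picture, the positive-cone picture, and the preorder picture, all of which are already set up in the cited propositions.
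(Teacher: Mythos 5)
Your cycle (a) $\Rightarrow$ (b) $\Rightarrow$ (c) $\Rightarrow$ (a) is the same decomposition the paper uses, and your (a) $\Rightarrow$ (b) argument (contrapositive, with $H=S\cap S^{-1}$, $P=S\setminus S^{-1}$, verifying Definition~\ref{def-rel-convex}(b)) is essentially the paper's proof and is correct. However, your (b) $\Rightarrow$ (c) step has a genuine gap: the down-set $S=\{x\in G:\exists\,a\in\langle A\rangle,\ x\le a\}$ need \emph{not} be a subsemigroup. From $x\le a$ and $y\le b$, left-invariance gives $xy\le xb$, but to bound $xb$ by an element of $\langle A\rangle$ you would need to multiply $x\le a$ on the \emph{right} by $b$, which a merely left-invariant preorder does not allow. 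This failure is real, not cosmetic: take increasing homeomorphisms of $\mathbb{R}$ with the left total preorder $u\le v \Leftrightarrow u(0)\le v(0)$, let $a$ fix $[2,\infty)$ pointwise with $a(t)>t$ for $t<2$ and $a(0)=1$, and choose $x,y$ with $y(0)=1.9$, $x(0)=0$, $x(1.9)=100$; then the down-set of $\langle a\rangle$ is $\{u:u(0)<2\}$, which contains $x$ and $y$ but not $xy$. The paper avoids this by defining $S:=\{x\in G:\ \forall a_1\in\langle A\rangle\ \exists a_2\in\langle A\rangle,\ xa_1\le a_2\}$, which is closed under multiplication by composing the two bounds ($xya_1\le xa_2\le a_3$), and which still yields (c) by taking $a_1=e$.

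Your (c) $\Rightarrow$ (a) step also has a gap, precisely at the point you flag as the main obstacle but then dismiss: the claim that $g\notin H$ ``follows because $g$ is not in the residue, hence $e< g$ strictly'' is wrong. Lying outside the residue $\{x: e\le x\le e\}$ only means $\lnot(e\le g)$ \emph{or} $\lnot(g\le e)$; the element $g$ could sit strictly \emph{below} the residue, in which case $g\le e\le x$ for every $x\in\langle A\rangle$ and your chosen preorder fails to witness the negation of (c) at all. The paper closes this by applying (c) \emph{twice}: once to $\le$, producing $x\in\langle A\rangle\subseteq H$ with $g\le x\le e$, and once to the opposite preorder $\le_{op}$ (defined by $u\le_{op} v \Leftrightarrow v\le u$, again a left total preorder), producing $y\in\langle A\rangle$ with $e\le y\le g$. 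Together these give $e\le g\le e$, so $g$ lies in the residue $H$, contradicting $g\notin H$. Without this opposite-preorder argument (or an equivalent symmetrisation), your contrapositive does not go through.
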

\begin{proof}
    First we prove the implication (a) $\Rightarrow$ (b). Suppose that $S$ is a subsemigroup $S$ with $A\cup A^{-1}\subseteq S\subseteq G$ and $S\cup S^{-1}=G$. Let $P$ denote the set $S\setminus S^{-1}$. By $S\cup S^{-1}=G$, the group $G$ can be written as the disjoint union $P\sqcup (S\cap S^{-1})\sqcup P^{-1}$. We first prove that $P$ is a subsemigroup. For any $x,y\in P$, we have $xy\in S$ by the closure of $S$ under multiplication. If we have $xy\in S^{-1}$, then $y^{-1}=(xy)^{-1}x$ is in $S$ by the closure of $S$ under multiplication, which contradicts to the assumption that $y\in P$. Now we prove that $(S\cap S^{-1})P(S\cap S^{-1})\subseteq P$. On the one hand, since $P\subseteq S$, by the closure of $S$ under multiplication, we have $(S\cap S^{-1})P(S\cap S^{-1})\subseteq S$. On the other hand, if there exist $x,z\in S\cap S^{-1}$ and $y\in P$ such that $xyz\in S^{-1}$, then $y^{-1}=z(xyz)^{-1}x$ is in $S$ by the closure of $S$ under multiplication, which contradicts to the assumption that $y\in P$. So we have $(S\cap S^{-1})P(S\cap S^{-1})\subseteq P$ by the definition of $P$. Therefore $S\cap S^{-1}$ is left relatively convex by the condition (b) in Definition~\ref{def-rel-convex}. By the assumption that $g\in\mathrm{cl}(A)$, we have \[g\in\mathrm{cl}(A)\subseteq \mathrm{cl}(S\cap S^{-1})= S\cap S^{-1} \subseteq S.\]

    Then we prove the implication (b) $\Rightarrow$ (c). For any left total preorder $\le$ on $G$, consider the set \[S:=\{x\in G: \mbox{for all } a_1 \in \langle A\rangle\mbox{ there exists } a_2\in \langle A\rangle  \mbox{ such that } x a_1\le a_2\}.\]
    We prove that $S$ satisfies the condition described in (b). First, for any $x\in A\cup A^{-1}$ and any $a_1\in \langle A\rangle$, we can choose $a_2 = x a_1 \in \langle A\rangle$. Thus by the reflexivity of $\le$, we have $A\cup A^{-1}\subseteq M$. Second, if $x, y\in S$, then for all $a_1\in \langle A\rangle$ there exists $a_2\in \langle A\rangle$ such that $y a_1\le a_2$, and also there exists $a_3 \in \langle A\rangle$ such that $x a_2 \le a_3$. By the left-invariance and the transitivity of $\le$, we have $xya_1\le x a_2\le a_3$. Thus we have $xy\in S$ by the definition of $S$. In other words, $S$ is a subsemigroup. Finally, for any $x\in G$, if both $x\not\in S$ and $x^{-1}\not \in S$ hold true, then there exists $a_1\in \langle A\rangle$ such that $\lnot(x a_1\le a_2)$ for all $a_2\in \langle A\rangle$, and there exists $a_3\in \langle A\rangle$ such that $\lnot(x^{-1} a_3\le a_4)$ for all $a_4\in \langle A\rangle$. In particular, we have $\lnot(x a_1\le a_3)$ and $\lnot(x^{-1} a_3\le a_1)$. By the left-invariance of $\le$, neither $x a_1\le a_3$ nor $a_3\le x a_1$ holds true, which contradicts to the totality of $\le$. Thus we have $S\cup S^{-1}=G$. Suppose that (b) holds, then we have $g\in S$. By taking $a_1=e$ in the definition of $S$, there exists $x\in \langle A\rangle$ such that $g\le x$. 

    Finally we prove the implication (c) $\Rightarrow$ (a). Suppose that $H$ is an arbitrary left relatively convex subgroup with $A\subseteq H\subseteq G$. Since $H$ is a subgroup, we have $\langle A\rangle \subseteq H$. By the implication (a) $\Rightarrow$ (b) in Proposition~\ref{prop-rel-convex-1}, there exists a left total preorder $\le$ on $G$ with the residue being $H$. Suppose that (c) holds, then there exists $x\in \langle A\rangle$ such that $g\le x$. Define $x\le_{op} y$ if and only if $y\le x$, then we can check that $\le_{op}$ is also a left total preorder. By the statement (c), there exists $y\in \langle A\rangle$ such that $g\le_{op} y$. Because $e\le y\le g\le x\le e$, we have $g\in H$. By the arbitrariness of $H$, we have $g\in \mathrm{cl}(A)$.
\end{proof}

Now we present a methodology for proving $g_{k+1}\in \mathrm{cl}(\{g_1,\ldots, g_k\})$ in a group $G$ using generic automated theorem proving based on the implication (b) $\Rightarrow$ (a) in Proposition~\ref{prop-equiv-cofinal}. For convenience, let $\CC$ denote the following axioms for a unary predicate $P(\cdot)$:
\begin{enumerate} [label=(\alph*)]
\item $\forall x \forall y (P(x)\land P(y)\to   P(x\cdot y))$, \hfill\emph{(closure)} 
\item $\forall x (P(x)\lor P(x'))$.  \hfill\emph{(connectedness)} 
\end{enumerate}

\begin{proposition}\label{prop-cl}
    Let $G$ be a group with presentation $\langle S|R\rangle$. Let $t_1, \ldots, t_k, t_{k+1}$ be ground terms representing group elements $\bar{t}_1, \ldots, \bar{t}_k, \bar{t}_{k+1}$ in $G$ respectively. Then the theory $\Gr \cup \Ax_R\cup \CC\cup \{P(t_i)\land P(t_i'): i=1,\ldots, k\} \cup \{\lnot P(t_{k+1})\}$ is inconsistent if and only if $\bar{t}_{k+1}$ is in the left relatively convex subgroup closure $\mathrm{cl}(\{\bar{t}_1,\ldots,\bar{t}_k\})$.
\end{proposition}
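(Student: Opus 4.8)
The plan is to reduce everything to the equivalence (a) $\Leftrightarrow$ (b) of Proposition~\ref{prop-equiv-cofinal}, applied with $A=\{\bar{t}_1,\ldots,\bar{t}_k\}$ and $g=\bar{t}_{k+1}$. The guiding observation is that, for a group $G'$, a subset $S'\subseteq G'$ satisfies the axioms $\CC$ precisely when $S'$ is a subsemigroup with $S'\cup (S')^{-1}=G'$; this is exactly the kind of subsemigroup appearing in statement (b). So the interpretation of $P$ in any model is a ``connected'' subsemigroup, and the extra atomic facts $P(t_i)\wedge P(t_i')$ and $\lnot P(t_{k+1})$ record that this subsemigroup contains the images of $A\cup A^{-1}$ but not the image of $g$. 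The whole proof is then a matter of transporting such a subsemigroup between $G$ and the quotient groups that arise as models of $\Gr\cup\Ax_R$.

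For the consistency direction I would argue by contraposition: assume $\bar{t}_{k+1}\notin\mathrm{cl}(A)$. By the equivalence (a) $\Leftrightarrow$ (b) of Proposition~\ref{prop-equiv-cofinal}, the failure of (a) is equivalent to the failure of (b), so there is a subsemigroup $S$ of $G$ with $A\cup A^{-1}\subseteq S$, with $S\cup S^{-1}=G$, and with $g\notin S$. I would then take $G$ itself as the underlying group (it is a model of $\Gr\cup\Ax_R$) and interpret $P$ as $S$. A routine check confirms all the remaining axioms: the closure axiom of $\CC$ from $S$ being a subsemigroup, the connectedness axiom from $S\cup S^{-1}=G$, the facts $P(t_i)\wedge P(t_i')$ from $A\cup A^{-1}\subseteq S$, and $\lnot P(t_{k+1})$ from $g\notin S$. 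This exhibits a model, so the theory is consistent.

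For the inconsistency direction I would assume, for contradiction, that the theory has a model $\mathfrak{M}$. Its reduct to the group language is a model of $\Gr\cup\Ax_R$, hence a quotient $G'=G/N$ with projection $\pi:G\to G'$; since $\pi$ is a homomorphism fixing the generators, every ground term $t$ is interpreted in $\mathfrak{M}$ as $\pi(\bar t)$. Writing $S'\subseteq G'$ for the interpretation of $P$, the axioms say that $S'$ is a subsemigroup with $S'\cup(S')^{-1}=G'$, that $\pi(\bar{t}_i),\pi(\bar{t}_i)^{-1}\in S'$ for all $i$, and that $\pi(\bar{t}_{k+1})\notin S'$. The decisive step is to pull back: set $S:=\pi^{-1}(S')\subseteq G$. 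One checks directly that $S$ is a subsemigroup (as the preimage of a subsemigroup under a homomorphism), that $A\cup A^{-1}\subseteq S$, that $S\cup S^{-1}=G$ (using $G'=S'\cup(S')^{-1}$ and $\pi(x^{-1})=\pi(x)^{-1}$), and that $g\notin S$ (since $g\in\pi^{-1}(S')$ would mean $\pi(\bar{t}_{k+1})\in S'$). Thus statement (b) of Proposition~\ref{prop-equiv-cofinal} fails for this $S$, so by the equivalence (a) $\Leftrightarrow$ (b) statement (a) fails as well, i.e.\ $\bar{t}_{k+1}\notin\mathrm{cl}(A)$, contradicting the hypothesis.

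The main obstacle, and the reason the pullback is needed, is that $\mathrm{cl}(A)$ is a property of $G$ itself whereas a model of $\Gr\cup\Ax_R$ need not be $G$ but only a quotient of it. The connectedness-plus-closure structure on the quotient does not obviously say anything about $G$ until one verifies that its preimage under $\pi$ is again a connected subsemigroup separating $g$ from $A\cup A^{-1}$; the only point requiring care is that $\pi(\bar{t}_{k+1})\notin S'$ genuinely pulls back to $g\notin S$, which holds because membership in a preimage is equivalent to membership of the image. Once this transfer is in place, both directions follow immediately from Proposition~\ref{prop-equiv-cofinal}.
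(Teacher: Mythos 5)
Your proposal is correct and takes essentially the same approach as the paper: both directions reduce to the equivalence (a) $\Leftrightarrow$ (b) of Proposition~\ref{prop-equiv-cofinal}, with the model built on $G$ itself (interpreting $P$ as the subsemigroup $S$) for the consistency direction, and the interpretation of $P$ pulled back to $G$ for the converse. The paper phrases the pullback syntactically, as the set of elements of $G$ represented by ground terms $t$ with $P(t)$ true in the model, which coincides with your $\pi^{-1}(S')$; your only slip, calling the model a quotient of $G$ (it need only contain a homomorphic image of $G$), is harmless because the argument uses just the homomorphism $\pi$ and universally quantified axioms, never surjectivity.
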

\begin{proof}
    We first prove the ``only if'' part. Assume that $\bar{t}_{k+1}$ is not in the left relatively convex subgroup closure $\mathrm{cl}(\{\bar{t}_1,\ldots,\bar{t}_k\})$. By the implication (b) $\Rightarrow$ (a) in Proposition~\ref{prop-equiv-cofinal}, there exists a subsemigroup $S_+\subseteq G$ with $\bar{t}_i,\bar{t}_i^{-1}\in S_+$ ($1\le i\le k$), $S_+\cup S_+^{-1}=G$, and $\bar{t}_{k+1}\not\in S_+$. Define the unary predicate $P$ by $P(t)$ if and only if $t$ represents an element in $S_+$. Then we can check that the group $G$ together with the predicate $P$ constitutes a model for the theory $\Gr\cup \Ax_R\cup\CC\cup \{P(t_i)\land P(t_i'): i=1,\ldots, k\} \cup \{\lnot P(t_{k+1})\}$. Therefore it is consistent if $\bar{t}_{k+1}$ is not in $\mathrm{cl}(\{\bar{t}_1,\ldots,\bar{t}_k\})$.

    Then we prove the ``if'' part. Assume that the theory $\Gr \cup \Ax_R\cup \CC\cup \{P(t_i)\land P(t_i'): i=1,\ldots, k\} \cup \{\lnot P(t_{k+1})\}$ is consistent, then there is a model $\mathfrak{M}$ of it. Let $S_+\subseteq G$ denote the subset \[S_+:=\{x\in G: x\mbox{ is represented by a ground term }t\mbox{ such that } P(t) \mbox{ in } \mathfrak{M}\}\]
    Then $S_+$ has the following properties.
    \begin{enumerate}[label=(\alph*)]
        \item For any $x,y\in S_+$, there exist ground terms $t_1, t_2$ representing $x,y$ respectively such that $P(t_1)$ and $P(t_2)$ hold true in $\mathfrak{M}$. By the closure axiom in $\CC$, $P(t_1\cdot t_2)$ holds true in $\mathfrak{M}$. Since $xy$ is represented by $t_1\cdot t_2$, we have $xy\in S_+$ for any $x,y\in S_+$. Thus $S_+$ is a subsemigroup.
        \item For each $1\le i\le k$, the elements $\bar{t}_i$ and $\bar{t}_i^{-1}$ in $G$ are represented by the ground terms $t_i$ and $t_i'$ respectively. Thus by $P(t_i)\land P(t_i')$, we have $\bar{t}_i, \bar{t}_i^{-1}\in S_+$.
        \item For any $x\in G$, let $t$ be a ground term representing $x$. Then $x^{-1}$ is represented by the ground term $t'$. By the connectedness axiom in $\CC$, either $P(t)$ or $P(t')$ holds true in $\mathfrak{M}$. Thus we have $S_+\cup S_+^{-1}=G$. 
    \end{enumerate}
    For any ground term $s$ representing $\bar{t}_{k+1}$ in $G$, by the definition of $\langle S|R\rangle$, the axioms $\Gr\cup \Ax_R$ imply that $s=t_{k+1}$ in $\mathfrak{M}$. By the axiom $\neg P(t_{k+1})$, we have $\neg P(s)$ in $\mathfrak{M}$. Thus we have $\bar{t}_{k+1}\not\in S_+$.

    By the implication (a) $\Rightarrow$ (b) in Proposition~\ref{prop-equiv-cofinal}, the element $\bar{t}_{k+1}$ is not in the left relatively convex subgroup closure $\mathrm{cl}(\{\bar{t}_1,\ldots, \bar{t}_{k}\})$.
\end{proof}

\subsection{Absolute cofinality}

We say a subgroup $H\subseteq G$ is \emph{left absolutely cofinal} if the left relatively convex subgroup closure $\mathrm{cl}(H)$ equals to $G$. By the equivalence (a) $\Leftrightarrow$ (c) in Proposition~\ref{prop-equiv-cofinal}, a subgroup $H\subseteq G$ is left absolutely cofinal if and only if it is cofinal\footnote{A subset $B\subseteq A$ of a preordered set $(A,\le)$ is called \emph{cofinal} if for every $a\in A$ there exists $b\in B$ such that $a\le b$.} with respect to every left total preorder $\le$ on $G$.

It follows from the condition (a) in Definition~\ref{def-rel-convex} that the trivial subgroup $\{e\}$ is left relatively convex if and only if $G$ is left-orderable. Now we establish the left absolutely cofinal counterpart of this fact.

\begin{proposition}\label{proposition-e-cofinal}
    The trivial subgroup $\{e\}$ is not left absolutely cofinal in the group $G$ if and only if $G$ admits a nontrivial left-orderable quotient.
\end{proposition}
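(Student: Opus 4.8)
The plan is to prove both implications by analysing the subgroup $N := \mathrm{cl}(\{e\})$, the smallest left relatively convex subgroup of $G$. Indeed $N$ is itself left relatively convex by Proposition~\ref{convex-intersection}, and since every left relatively convex subgroup contains $e$, it contains $N$; moreover $G$ is trivially left relatively convex, so the family is nonempty and $N$ is well defined. By the definition of left absolute cofinality, $\{e\}$ fails to be left absolutely cofinal precisely when $N \neq G$. Hence the task reduces to showing that $N \neq G$ if and only if $G$ admits a nontrivial left-orderable quotient.

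For the forward implication, I would first show that $N$ is normal in $G$. The key observation is that left relative convexity is preserved under conjugation: for any $g \in G$ and any left relatively convex subgroup $H$, the assignment $xH \mapsto xg^{-1}(gHg^{-1})$ is a well-defined $G$-equivariant bijection from the $G$-set $G/H$ onto the $G$-set $G/(gHg^{-1})$, so a $G$-invariant order on $G/H$ transports along it to a $G$-invariant order on $G/(gHg^{-1})$, witnessing via condition (a) of Definition~\ref{def-rel-convex} that $gHg^{-1}$ is left relatively convex. Since the family of left relatively convex subgroups is thereby closed under conjugation, its intersection satisfies $gNg^{-1} = N$ for all $g$, i.e. $N$ is normal. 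Now $N$ being left relatively convex furnishes a $G$-invariant total order on $G/N$; because $N$ is normal the $G$-action factors through the left-translation action of the group $G/N$ on itself (as $N$ acts trivially), and so this order is exactly a left order on the group $G/N$. Since $N \neq G$, the quotient $G/N$ is a nontrivial left-orderable group.

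For the reverse implication, suppose $K$ is a proper normal subgroup of $G$ with $G/K$ left-orderable, and fix a left order $\prec$ on $G/K$. Identifying the left $G$-set $G/K$ of cosets with the group $G/K$ via the quotient map, the $G$-action on cosets factors through the left-regular action of $G/K$, so the left-invariance of $\prec$ makes it a $G$-invariant total order on the $G$-set $G/K$. By condition (a) of Definition~\ref{def-rel-convex}, $K$ is left relatively convex in $G$. Consequently $N = \mathrm{cl}(\{e\}) \subseteq K \subsetneq G$, so $N \neq G$ and $\{e\}$ is not left absolutely cofinal.

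The main obstacle is the forward direction, and specifically the normality of $N$: one must check the conjugation-invariance of left relative convexity and then verify that a $G$-invariant order on $G/N$ genuinely descends to a left order on the \emph{quotient group} $G/N$, which is exactly where normality of $N$ is needed (so that $N$ acts trivially and $G/N$ carries a group structure). The reverse direction is essentially a translation between the two equivalent formulations of Definition~\ref{def-rel-convex}, with the only point to confirm being that left-invariance of the order on the group $G/K$ corresponds to $G$-invariance on the coset $G$-set.
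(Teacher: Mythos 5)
Your proof is correct, and your forward direction takes a genuinely different route from the paper's. The paper picks an arbitrary proper left relatively convex subgroup $H$ (which exists since $\mathrm{cl}(\{e\})\neq G$), invokes the implication (a) $\Rightarrow$ (d) of Proposition~\ref{prop-rel-convex-2}, and then cites an external result (Glass's Lemma 2.2.3) to conclude that $G/\mathrm{core}(H)$ is left-orderable, the nontriviality coming from $\mathrm{core}(H)\subseteq H\subsetneq G$. You instead work with the canonical subgroup $N=\mathrm{cl}(\{e\})$: you observe that left relative convexity is preserved by conjugation (your transport of the $G$-invariant order along the $G$-equivariant bijection $xH\mapsto xg^{-1}(gHg^{-1})$ is valid, and one can equally see it from condition (b) of Definition~\ref{def-rel-convex} by conjugating the semigroup $P$), so the intersection defining $N$ is conjugation-invariant and $N$ is normal; then the $G$-invariant order on the $G$-set $G/N$ is literally a left order on the quotient group $G/N$, since for normal $N$ the coset action factors through left translation of $G/N$ on itself. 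What your approach buys is self-containedness — it uses only Definition~\ref{def-rel-convex}(a) and Proposition~\ref{convex-intersection}, with no appeal to Proposition~\ref{prop-rel-convex-2} or outside literature — and it exhibits a canonical witness: $G/\mathrm{cl}(\{e\})$ is itself a nontrivial left-orderable quotient (in fact the core construction in the paper lands inside your $N$ anyway, since a normal left relatively convex subgroup contains no more than $\mathrm{cl}(\{e\})$ forces). What the paper's route buys is brevity given the cited machinery, and it makes visible the connection to the classical core/Tararin-style results that the rest of Section~\ref{sec:cofinality} draws on. Your reverse direction coincides with the paper's.
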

\begin{proof}
    We first prove the ``only if'' part. If $e$ is not left absolutely cofinal in the group $G$, then there exists a proper subgroup $H$ which is left relatively convex in $G$. By the implication (a) $\Rightarrow$ (d) in Proposition~\ref{prop-rel-convex-2}, for any finite set of elements $g_1,\ldots, g_n\in G\setminus H$, there exist $\varepsilon_1,\ldots,\varepsilon_n\in\{-1,1\}$ such that the subsemigroup generated by $g_1^{\varepsilon_1},\ldots,g_n^{\varepsilon_n}$ has empty intersection with $H$. By \cite[Lemma 2.2.3]{glass1999partially}, the quotient group $G/\mathrm{core}(H)$ is left-orderable, where $\mathrm{core}(H):=\cap\{g H g^{-1}:g\in G\}$ is the largest normal subgroup of $G$ contained in $H$. Since $H$ is proper, the quotient group $G/\mathrm{core}(H)$ is nontrivial.

    Then we prove the ``if'' part. Suppose that $N$ is a proper normal subgroup of $G$ such that $G/N$ is left-orderable. The $G$-action on $G/N$ preserves the left orders, so by the condition (a) in Definition~\ref{def-rel-convex}, the subgroup $N$ is left relatively convex in $G$. Therefore $e$ is not left absolutely cofinal in $G$.
\end{proof}

It is worth noting that the condition that a group does not admit any nontrivial left-orderable quotients holds significant importance in topology. According to the L-space conjecture, it is conjectured that a closed connected $3$-manifold is an L-space if and only if its fundamental group does not admit any nontrivial left-orderable quotient. If we replace this condition with the non-left-orderability, we would have to assume the manifold is at least irreducible. This would only make things more complicated; compare \cite[Theorem 1.9]{boyer2022order} to \cite[Corollary 1.10]{boyer2022order} for example.

While one may establish the non-left-orderability by proving $\mathrm{cl}(e)=G$ through successive applications of Proposition~\ref{prop-cl}, this method is not much different from the positive cone formalisation presented in Proposition~\ref{prop:positive_cone} and Proposition~\ref{prop:positive_cone_C}. In the remainder of this subsection, we introduce an alternative approach that simplifies the computation.

It is well-known that the absolute cofinality of a cyclic subgroup $\langle g \rangle$ implies that every conjugate of $g$ has the same sign with respect to any given left order on $G$; see \cite[Property 3.1]{conrad1959right} or \cite[Lemma 4.6]{boyer2017foliations} for example. In order to generalise this fact to left total preorders, we prove the following statement.

\begin{proposition}\label{prop-isolated}
    Let $\le$ be a left total preorder on the group $G$. If $e \le x^n$ for some $x\in G$ and some positive integer $n$, then $e\le x$.
\end{proposition}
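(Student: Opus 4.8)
The plan is to argue by contraposition: I will show that if $\lnot(e\le x)$, then $\lnot(e\le x^n)$ for every positive integer $n$. The entire argument rests on one structural observation about left total preorders, namely that left multiplication by any fixed group element is an automorphism of the preorder. Indeed, left-invariance gives $a\le b\Rightarrow ca\le cb$, and applying this same implication with $c^{-1}$ in place of $c$ yields the converse; hence $a\le b\Leftrightarrow ca\le cb$ for every $c\in G$.

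From this equivalence I derive that the associated strict relation, defined by $a<b$ if and only if $a\le b$ and $\lnot(b\le a)$, is itself left-invariant and transitive. Left-invariance is immediate: if $a<b$ then $ca\le cb$, and $cb\le ca$ would force $b\le a$ by the equivalence above, contradicting $a<b$. Transitivity follows routinely from the transitivity of $\le$ together with the defining clause $\lnot(b\le a)$.

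Now assume $\lnot(e\le x)$. By the totality of $\le$ we have $x\le e$, and combined with $\lnot(e\le x)$ this says precisely that $x<e$. Multiplying this strict inequality on the left by $x^k$ and invoking left-invariance of $<$ gives $x^{k+1}<x^k$ for every $k\ge 0$ (reading $x^0=e$). Chaining these with transitivity of $<$ produces the strictly descending chain $e=x^0>x>x^2>\cdots>x^n$, so in particular $x^n<e$. By the definition of $<$ this yields $\lnot(e\le x^n)$, which is exactly the contrapositive of the desired implication.

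I do not anticipate a genuine obstacle; the only point requiring care is the bookkeeping around the strict relation $<$ inside a \emph{preorder} rather than a partial order, since $\le$ need not be antisymmetric. Once it is checked that left multiplication is an order automorphism, both the left-invariance and the transitivity of $<$ are purely formal, and the descending-chain argument then closes the proof at once.
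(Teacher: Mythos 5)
Your proof is correct, and the engine driving it is the same as in the paper: from $x\le e$, left-invariance yields the descending chain of powers $x^{n}\le x^{n-1}\le\cdots\le x\le e$. But your packaging is genuinely different, and heavier than it needs to be. The paper argues directly: by totality either $e\le x$ (done) or $x\le e$, and in the latter case the hypothesis $e\le x^{n}$ chains with $x^{n}\le x^{n-1}\le\cdots\le x$ by transitivity to give $e\le x$ anyway. Because the hypothesis is used \emph{inside} the chain, no strictness is ever needed. Your contrapositive formulation, by contrast, forces you to produce a \emph{strict} descending chain, which in turn requires introducing the strict relation $a<b$ (meaning $a\le b$ and $\lnot(b\le a)$), proving that left multiplication is an order automorphism (you need the reflection $ca\le cb\Rightarrow a\le b$, not just invariance), and verifying left-invariance and transitivity of $<$ in a preorder. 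All of these steps are sound — your care about the non-antisymmetry of $\le$ is well placed — but they are machinery the direct argument dispenses with entirely. The trade-off: your version isolates a reusable fact (the strict part of a left total preorder is itself left-invariant and transitive), while the paper's version is a three-line proof using only totality, transitivity, and one application of left-invariance.
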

\begin{proof}
    By the connectedness of $\le$, we have either $e\le x$ or $x\le e$. In the second case, we have \[e\le x^n\le x^{n-1}\le \cdots \le x\] by the transitivity and the left-invariance of $\le$. Therefore, in either case, we have $e\le x$.
\end{proof}

Now we generalise \cite[Lemma 4.6]{boyer2017foliations} to left total preorders.

\begin{proposition}\label{prop-cofinal-conjugation}
    Let $\le$ be a left total preorder on the group $G$. Let $x\in G$ be an element with $e\le x$ and $\langle x\rangle$ being left absolutely cofinal. Then we have $e\le y x y^{-1}$ for every $y\in G$.
\end{proposition}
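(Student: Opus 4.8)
The plan is to argue by contradiction: assume $\neg(e\le yxy^{-1})$, so by totality $yxy^{-1}\le e$, and then show that this in fact forces $e\le yxy^{-1}$. The engine is the hypothesis that $\langle x\rangle$ is left absolutely cofinal, which by the equivalence (a)$\Leftrightarrow$(c) in Proposition~\ref{prop-equiv-cofinal} means $\langle x\rangle$ is cofinal with respect to \emph{every} left total preorder on $G$. I would apply this not only to $\le$ but also to the opposite preorder $\le_{\mathrm{op}}$ given by $a\le_{\mathrm{op}}b\iff b\le a$, which is again a left total preorder, exactly as in the proof of Proposition~\ref{prop-equiv-cofinal}. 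Combined with $e\le x$, which makes the non-negative powers $e\le x\le x^2\le\cdots$ increasing and the non-positive powers decreasing, cofinality in $\le$ yields: for every $g$ there is $n\ge0$ with $g\le x^n$, while cofinality in $\le_{\mathrm{op}}$ yields: for every $g$ there is $m\ge0$ with $x^{-m}\le g$. Thus $\langle x\rangle$ is simultaneously unbounded above and below in $\le$.

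Next I would unwind the assumption. Since $(yxy^{-1})^n=yx^ny^{-1}$, left-invariance turns $yxy^{-1}\le e$ into $yx^ny^{-1}\le e$ for all $n\ge0$, and left-multiplying by $y^{-1}$ produces the key family $x^ny^{-1}\le y^{-1}$. I would then squeeze $y^{-1}$ from both sides. Choosing $p$ with $y^{-1}\le x^p$ and reading the family at $n=p$ gives $x^py^{-1}\le y^{-1}\le x^p$; left-multiplying $x^py^{-1}\le x^p$ by $x^{-p}$ yields $y^{-1}\le e$. Dually, choosing $q$ with $x^{-q}\le y^{-1}$ and left-multiplying by $x^q$ gives $e\le x^qy^{-1}$, which together with $x^qy^{-1}\le y^{-1}$ (the family at $n=q$) gives $e\le y^{-1}$. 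Hence $y^{-1}$ is equivalent to $e$ in the preorder. To finish, from $y^{-1}\le e$ I get $e\le y$ (left-multiply by $y$), so $e\le x$ gives $y\le yx$ and then $e\le yx$; from $e\le y^{-1}$ I get $yx\le yxy^{-1}$ (left-multiply by $yx$); transitivity then delivers $e\le yxy^{-1}$, contradicting the assumption. An alternative endgame is to assemble $y^{-1}\le e\le x^qy^{-1}$ into $e\le yx^qy^{-1}=(yxy^{-1})^q$ and invoke Proposition~\ref{prop-isolated}.

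The main obstacle, which the whole argument must respect, is that $\le$ is only left-invariant: one cannot multiply inequalities on the right, so the textbook ``conjugate of a positive element is positive'' manipulations are unavailable. The two devices that circumvent this are, first, extracting coinitiality of $\langle x\rangle$ from cofinality by passing to $\le_{\mathrm{op}}$, and second, performing every ``shift'' by left-multiplying the bounds $x^{\pm p}$ rather than right-multiplying by $y^{-1}$. The only mild subtlety is verifying that, because $e\le x$, the abstract cofinality of $\langle x\rangle$ upgrades to domination by the non-negative powers $x^n$ (and dually for the non-positive powers); this is precisely where the hypothesis $e\le x$ is genuinely used, and it is what breaks the symmetry that would otherwise permit $yxy^{-1}\le e$.
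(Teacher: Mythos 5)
Your proof is correct, but it follows a genuinely different route from the paper's. The paper splits by totality into the two cases $y\le e$ and $e\le y$; in each case a single cofinality bound (with respect to $\le$ in the first case, and with respect to $\le_{op}$ in the second) produces a positive $n$ with $e\le (yxy^{-1})^n$, and Proposition~\ref{prop-isolated} then extracts $e\le yxy^{-1}$. You instead argue by contradiction: the negation yields the family $x^n y^{-1}\le y^{-1}$, and you use \emph{both} cofinality directions simultaneously (the upper bound $y^{-1}\le x^p$ from $\le$, the lower bound $x^{-q}\le y^{-1}$ from $\le_{op}$) to squeeze $y^{-1}$ into the equivalence class of $e$, after which a direct left-invariance chain gives $e\le yxy^{-1}$, contradicting the negation. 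What each approach buys: the paper's case split is shorter and reuses Proposition~\ref{prop-isolated}, which it needs anyway for Proposition~\ref{prop-stage-2}; your main line avoids the isolation lemma entirely (only your alternative endgame invokes it) and establishes something slightly stronger along the way, namely that under the assumption $yxy^{-1}\le e$ the element $y$ is forced into the residue $\{z\in G: e\le z\le e\}$ of the preorder. Both arguments ultimately rest on the same two devices, which you identify explicitly: passing to $\le_{op}$ to convert absolute cofinality into coinitiality (legitimate by the equivalence (a) $\Leftrightarrow$ (c) in Proposition~\ref{prop-equiv-cofinal}), and using $e\le x$ together with left-invariance to make the power chain $\cdots\le x^{-1}\le e\le x\le x^2\le\cdots$ monotone, so that the cofinality bounds can be taken with exponents of the desired sign.
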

\begin{proof}
    By the connectedness of $\le$, we have either $y\le e$ or $e\le y$. We consider two scenarios separately.

    First, suppose that $y\le e$. Then by the cofinality of $\langle x\rangle$ with respect to $\le $, there exists an integer $n$ such that $y^{-1}\le x^n$. Because \[y^{-1}\le x^n\le x^{n+1}\le x^{n+2}\le \cdots,\] we can assume that $n$ is positive. And we have \[e \le yx^n \le yx^n y^{-1}=(y x y^{-1})^n.\]
    By Proposition~\ref{prop-isolated}, we have $e\le yxy^{-1}$.

    Then, suppose instead that $e \le y$. Define the binary relation $\le_{op}$ by $x\le_{op}y$ if and only if $y\le x$, then we can check that $\le_{op}$ is also a left total order on $G$. By the cofinality of $\langle x \rangle$ with respect to $\le_{op}$, there exists an integer $n$ such that $y^{-1}\le_{op} x^{-n}$. Because \[\cdots\le x^{-n-2}\le x^{-n-1}\le x^{-n}\le y^{-1},\]
    we can also assume that $n$ is positive. And we have 
    \[e\le y \le y x^n y^{-1}= (yxy^{-1})^n.\]
    By Proposition~\ref{prop-isolated}, we have $e\le yxy^{-1}$.
\end{proof}

A subsemigroup $A$ of the group $G$ is called \emph{isolated} if $x^n\in A$ for some positive integer $n$ implies $x\in A$ for every $x\in G$. It is called \emph{normal} if $xAx^{-1}\subseteq A$ for every $x\in G$. The following statement is a left total preorder adaptation of the technique developed in \cite[Section 3]{nie20201}.

\begin{proposition}\label{prop-stage-2}
    Let $g_0, g_1, \ldots, g_k\in G$. Suppose that $\langle g_i\rangle$ is left absolutely cofinal in $G$ for each $0\le i\le k$, and that $\{e\}$ is not left absolutely cofinal in $G$. Then there exists an isolated normal subsemigroup $S$ such that:
    \begin{enumerate} [label=(\alph*)]
        \item for each $1\le i\le k$, either $g_i\in S$ or $g_i^{-1}\in S$ holds;
        \item the element $g_0$ is in $S$;
        \item the identity element $e$ is not in $S$.
    \end{enumerate}
\end{proposition}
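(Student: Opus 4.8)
The plan is to realise the desired semigroup as the intersection of the strict positive cones of a suitable family of left total preorders, and to extract that family from a left-orderable quotient supplied by the non-cofinality of $\{e\}$.

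First, since $\{e\}$ is not left absolutely cofinal, Proposition~\ref{proposition-e-cofinal} gives a proper normal subgroup $N$ of $G$ with $G/N$ left-orderable; I fix a left order on $G/N$ and pull it back to a left total preorder $\le$ on $G$ whose residue subgroup is exactly $N$. The normality of $N$ is the feature that makes the argument go through, because membership in $N$ — equivalently, lying in the residue of $\le$ — is then conjugation-invariant. Each $g_i$ is cofinal, so $\langle g_i\rangle\not\subseteq N$ (otherwise $\mathrm{cl}(\langle g_i\rangle)\subseteq \mathrm{cl}(N)=N\neq G$), hence $g_i\notin N$. Reversing the chosen order on $G/N$ if necessary, I may assume $e<g_0$; and for each $1\le i\le k$ I choose $\varepsilon_i\in\{-1,1\}$ so that $e<g_i^{\varepsilon_i}$. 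Put $T=\{g_0,g_1^{\varepsilon_1},\dots,g_k^{\varepsilon_k}\}$ and let $T^G$ be the set of all conjugates of elements of $T$.

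The key step is to show that $T^G$ lies in the strict positive cone $P=\{x:e<x\}$ of $\le$. For $t\in T$ and $y\in G$, the element $t$ is cofinal with $e\le t$, so Proposition~\ref{prop-cofinal-conjugation} gives $e\le yty^{-1}$; and since $t\notin N$ and $N$ is normal we have $yty^{-1}\notin N$, so in fact $e<yty^{-1}$. Thus $\le$ is a left total preorder that is strictly positive on all of $T^G$, which shows that the family $\mathcal{F}$ of all left total preorders on $G$ that are strictly positive on $T^G$ is non-empty.

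Finally, I would define $S=\{x\in G: e<'x\text{ for every }\le'\in\mathcal{F}\}$, the intersection of the strict positive cones over $\mathcal{F}$, and verify that $S$ has the required properties. Membership $T\subseteq T^G\subseteq S$ gives (b) and (a); $e\notin S$ because $\mathcal{F}\neq\emptyset$ gives (c); closure under multiplication is immediate from left-invariance and transitivity (strictness is preserved since a product landing in a residue would force a factor into that residue); and isolation follows from Proposition~\ref{prop-isolated} together with the fact that the residue of a left total preorder is a subgroup, for if $x^n\in S$ while $x$ lay in some residue of a $\le'\in\mathcal{F}$, then $x^n$ would lie there too, contradicting $e<'x^n$. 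The main obstacle is normality of $S$: here I would show that for $\le'\in\mathcal{F}$ and $y\in G$ the conjugated relation defined by $a\le''b\Leftrightarrow yay^{-1}\le' yby^{-1}$ is again a left total preorder, and that it still belongs to $\mathcal{F}$ precisely because $T^G$ is closed under conjugation; applying the hypothesis $x\in S$ to $\le''$ then yields $e<'yxy^{-1}$, and letting $\le'$ range over $\mathcal{F}$ gives $yxy^{-1}\in S$.
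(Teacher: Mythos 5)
Your proof is correct. It shares the paper's core mechanism---realising $S$ as an intersection of strict positive cones of a conjugation-closed family of left total preorders, with Proposition~\ref{prop-isolated} supplying isolation and Proposition~\ref{prop-cofinal-conjugation} supplying membership of the $g_i$---but the scaffolding differs in two genuine ways. First, where you invoke Proposition~\ref{proposition-e-cofinal} to replace the witness of non-cofinality of $\{e\}$ by a \emph{normal} subgroup $N$ (so that conjugates of elements of $T$ stay outside the residue for free), the paper works directly with a proper left relatively convex subgroup $H$, not necessarily normal, and instead uses the cofinality of each $\langle g_i\rangle$ to conclude $g_i\notin g^{-1}Hg$ for \emph{every} $g$, i.e.\ that conjugates of $g_i$ avoid $H$; so your detour through the normal subgroup is a convenience, not a necessity. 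Second, your family is abstract---all left total preorders strictly positive on $T^G$---and normality of $S$ comes from closure of this family under the conjugation operation on preorders, whereas the paper intersects the concrete family of right-translates $\le_g$ (defined by $x\le_g y\Leftrightarrow xg\le yg$) of the single preorder with residue $H$, and normality of $S=\bigcap_g P_g$ falls out of the equivariance $xP_gx^{-1}=P_{xg}$. The paper's version is more economical (one preorder, explicitly indexed cones); yours buys a cleaner strictness argument and a normality proof requiring no computation with translates, at the cost of the extra reduction through Proposition~\ref{proposition-e-cofinal}.
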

\begin{proof}
    Since $\{e\}$ is not left absolutely cofinal in $G$, there exists a proper subgroup $H$ which is left relatively cofinal in $G$. By the implication (a) $\Rightarrow$ (b) in Proposition~\ref{prop-rel-convex-1}, there exists a left total preorder $\le$ on $G$ with the residue being $H$. We assume $e\le g_0$ without loss of generality, because otherwise we can replace $\le$ with $\le_{op}$, where $x\le_{op} y$ if and only if $y\le x$. 
    
    For any $g\in G$, we define the binary relation $\le_{g}$ by $x \le_g y$ if and only if $x g \le y g$. Then the reflexivity, transitivity, totality, and left-invariance of $\le$ imply the same properties for $\le_g$ respectively. Therefore $\le_g$ is a left total order on $G$. Moreover, the residue group of $\le_g$ is $g^{-1}Hg$. 

    Let $P_g$ denote the set $\{x\in G\setminus g^{-1} H g: e\le_g x\}$. Then by the proof of the implication (b) $\Rightarrow$ (a) in Proposition~\ref{prop-rel-convex-1}, the set $P_g$ is a subsemigroup of $G$ such that $P_g\sqcup g^{-1} H g\sqcup P_g^{-1}$ is a partition of $G$, and $H P_g H\subseteq P_g$. We prove that the subset $S:= \bigcap_{g\in G} P_g$ satisfies the desired conditions.
    \begin{enumerate}[label=(\alph*)]
        \item $S$ is a subsemigroup, because every $P_g$ is a subsemigroup.
        \item If $x^n\in S$ for some $x\in G$ and some positive integer $n$, then we have $e\le_g x^n$ and $x^n\not\in g^{-1} H g$ for every $g\in G$. By Proposition~\ref{prop-isolated}, we have $e\le_g x$ for every $g\in G$. Since $g^{-1}H g$ is a subgroup, we have $x\not\in g^{-1} H g$ for any $g\in G$. Thus we have $x\in S$. In other words, $S$ is isolated.
        \item By the equation $x P_g x^{-1}= P_{xg}$, the subsemigroup $S$ is normal.
        \item For each $0\le i\le k$, since $\langle g_i\rangle$ is left absolutely cofinal in $G$, by the implication (b) $\Rightarrow$ (a) in Proposition~\ref{prop-rel-convex-1}, we have $g_i\not \in g^{-1}H g$ for any $g\in G$. By $g_i\not\in H$, there exist exponents $\varepsilon_i\in\{-1,1\}$ such that $g_i^{\varepsilon_i}\in P_e$. By $e\le g_0$, we have $\varepsilon_0=1$. Since $g^{-1}H g$ is a subgroup, we have $g_i^{\varepsilon_i}\not \in g^{-1}H g$ for any $g\in G$. Since $\langle g_i^{\varepsilon_i}\rangle=\langle g_i\rangle$ is left absolutely cofinal in $G$, by Proposition~\ref{prop-cofinal-conjugation}, we have $e\le_g g_i^{\varepsilon_i}$ for any $g\in G$. Therefore, we have $g_0\in S$, and $g_i^{\varepsilon_i}\in S$ for each $1\le i \le k$.
        \item Since $e\in g^{-1}H g$ for every $g\in G$, we have $e\not\in S$.
    \end{enumerate}
\end{proof}

Now we present our methodology to integrate the fixed point method for non-left-orderability into automated reasoning. We suppose that the left absolute cofinality was checked through successive uses of Proposition~\ref{prop-cl}. Then we may use Proposition~\ref{prop-fixed-point} below to establish the non-left-orderability.

For each positive integer $m$ and each ground term $t$, recursively define the ground term $t^m$ by $t^1=t$ and $t^m=t^{m-1}\cdot t$ for $m\ge 2$. Let $M$ be a set of positive integers. Let $\Isolated(M)$ denote the set of axioms for a unary predicate $P(\cdot)$, containing the following axiom for each $m\in M$:
\begin{enumerate} [label=(\alph*)]
\item $\forall x (P(x^m)\to P(x))$. \hfill\emph{($m$-isolation)} 
\end{enumerate}
\begin{proposition}\label{prop-fixed-point}

Let $G$ be a group with presentation $\langle S|R \rangle$. Let $t_0, t_1,\ldots,t_k$ be ground terms representing group elements $\bar{t}_0,\bar{t}_1,\ldots, \bar{t}_k$ in $G$ respectively. Suppose that $\langle \bar{t}_i\rangle$ is left absolutely cofinal in $G$ for each $0\le i\le k$. Let $\AxPC'$ denote $\AxPC$ minus the axiom of connectedness. Let $M$ be a set of positive integers. If the theory $\Gr\cup\Ax_R\cup\AxPC' \cup \PB\cup \Isolated(M)\cup  \{P(t_0)\} \cup\{P(t_i)\lor P(t_i'): i=1,\ldots, k\}$ is inconsistent, then $G$ does not admit nontrivial left-orderable quotients.
\end{proposition}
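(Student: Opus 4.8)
The plan is to prove the contrapositive: assuming that $G$ \emph{does} admit a nontrivial left-orderable quotient, I will exhibit an explicit model of the theory
\[
\Gr\cup\Ax_R\cup\AxPC' \cup \PB\cup \Isolated(M)\cup  \{P(t_0)\} \cup\{P(t_i)\lor P(t_i'): i=1,\ldots, k\},
\]
thereby showing it is consistent. This follows the same model-building template used in the ``only if'' directions of Proposition~\ref{prop-cl} and Proposition~\ref{prop-generalised-torsion}, where a suitable subsemigroup of $G$ interprets the predicate $P$.

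\textbf{Reduction and the key input.} First I would invoke Proposition~\ref{proposition-e-cofinal}: since $G$ admits a nontrivial left-orderable quotient, the trivial subgroup $\{e\}$ is \emph{not} left absolutely cofinal in $G$. This is exactly the standing hypothesis needed to apply Proposition~\ref{prop-stage-2}. Setting $g_0=\bar t_0, g_1=\bar t_1,\ldots,g_k=\bar t_k$ and using the assumption that each $\langle\bar t_i\rangle$ is left absolutely cofinal together with the non-cofinality of $\{e\}$, Proposition~\ref{prop-stage-2} produces an isolated normal subsemigroup $S\subseteq G$ satisfying: $g_0\in S$; for each $1\le i\le k$ either $g_i\in S$ or $g_i^{-1}\in S$; and $e\notin S$. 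This $S$ is the object that carries all the structure I need.

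\textbf{Constructing the model.} I would then equip the group $G$ with the unary predicate defined by $P(x)$ holds if and only if $x\in S$ (so that for a ground term $t$, $P(t)$ holds precisely when the element $\bar t$ it represents lies in $S$). The verification is a dictionary between the properties of $S$ and the axioms: $\Gr\cup\Ax_R$ hold because $G$ is the presented group; the irreflexivity axiom $\neg P(e)$ in $\AxPC'$ is the condition $e\notin S$; the closure axiom in $\AxPC'$ is that $S$ is a subsemigroup; the conjugacy invariance $\PB$ is the normality of $S$; each $m$-isolation axiom in $\Isolated(M)$, namely $P(x^m)\to P(x)$, is an instance of the isolation of $S$ (which holds for every positive exponent, hence in particular for each $m\in M$); the axiom $P(t_0)$ is $g_0\in S$; and each disjunction $P(t_i)\lor P(t_i')$ is the condition $g_i\in S$ or $g_i^{-1}\in S$. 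Thus $(G,P)$ is a model, and the theory is consistent.

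\textbf{Main obstacle.} The conceptual and technical weight of this proof sits entirely in Proposition~\ref{prop-stage-2}, which already packages the isolation, normality, and semigroup-membership data in precisely the form demanded by $\Isolated(M)$, $\PB$, and the closure axiom. Given that result, the present statement is essentially an assembly step, and the only point requiring care is the translation between ground terms and the group elements they represent, so that $P$ is well defined on $G$ and interprets the ground terms $t_i$ by $\bar t_i$. I therefore do not expect a genuine obstacle here beyond correctly matching each axiom to the corresponding clause of Proposition~\ref{prop-stage-2}.
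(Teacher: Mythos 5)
Your proposal is correct and follows exactly the paper's own proof: contrapositive via Proposition~\ref{proposition-e-cofinal}, then Proposition~\ref{prop-stage-2} applied to $\bar t_0,\ldots,\bar t_k$ to produce the isolated normal subsemigroup, and finally interpreting $P$ as membership in that subsemigroup to build a model. The only difference is that you spell out the axiom-by-axiom verification that the paper leaves as ``we can check,'' which is a faithful elaboration rather than a different route.
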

\begin{proof}
    If $G$ admits a nontrivial left-orderable quotient, then by Proposition~\ref{proposition-e-cofinal}, the trivial subgroup $\{e\}$ is not left absolutely cofinal in $G$. Then by Proposition~\ref{prop-stage-2}, there exists an isolated normal subsemigroup $S_+$ such that:
    \begin{enumerate} [label=(\alph*)]
        \item for each $1\le i\le k$, either $\bar{t}_i\in S_+$ or $\bar{t}_i^{-1}\in S_+$ holds;
        \item the element $\bar{t}_0$ is in $S_+$;
        \item the identity element $e$ is not in $S_+$.
    \end{enumerate}
    Define the unary predicate $P$ by $P(t)$ if and only if $t$ represents an element in $S_+$. Then we can check that the group $G$ together with the predicate $P$ constitutes a model for the theory  $\Gr\cup\Ax_R\cup\AxPC' \cup \PB\cup \Isolated(M)\cup\{P(t_0)\}\cup \{P(t_i)\lor P(t_i'): i=1,\ldots, k\}$. Therefore the theory is consistent if $G$ admits a nontrivial left-orderable quotient.
\end{proof}

Notice that the set of axioms $\{P(t_0)\} \cup\{P(t_i)\lor P(t_i'): i=1,\ldots, k\}$ in Proposition~\ref{prop-fixed-point} has the same form as $\mathfrak{S}^P$ defined in Proposition~\ref{prop:positive_cone}. Thus, for convenience, we name this set as $\mathfrak{S}^P$ with respect to the pairs $(e, t_i)$ ($i=0,1,\ldots k$).

\subsection{Examples}
\begin{example}
    Consider the Weeks manifold group with the presentation as shown in Example~\ref{ex:weeks}. We prove that this group does not admit nontrivial left-orderable quotients using automated reasoning.

    First, we prove that $\Gr\cup \Ax_R\cup\CC \cup\{P(a)\land P(a'), \neg P(b)\}$ is inconsistent (Task 16.1) using Prover9. By Proposition~\ref{prop-cl}, we have $b\in\mathrm{cl}(a)$. Thus $\langle a\rangle$ is left absolutely cofinal. By symmetry, the subgroup $\langle b\rangle$ is also left absolutely cofinal.

    Consider the set $\mathfrak{S}^P$ with respect to the pairs $(e,a)$ and $(e,b)$. We prove that $\Gr\cup\Ax_R\cup\AxPC' \cup \PB\cup \mathfrak{S}^P$ is inconsistent (Task 16.2) using Prover9. By Proposition~\ref{prop-fixed-point}, the group of interest does not admit nontrivial left-orderable quotients. The non-left-orderability follows from the nontriviality.
\end{example}
\begin{example}

Consider the fundamental group of the fourfold branched cover of the two-bridge knot $K_{[6,-6]}$. According to \cite[Theorem 1.9]{gordon2014taut}, this group is not left-orderable. We consider the following presentation of the group:
\[\langle a_i,b_i \;|\; a_i^3b_i=a_{i+1}^3, a_i b_i^3= b_{i-1}^3 \mbox{ for } i=0,1,2,3\rangle,\]
where the indices are taken modulo $4$.

We can derive contradictions (Task 17.1 and Task 17.2) from the theories $\Gr \cup \Ax_R\cup \CC\cup\{P(a_0)\land P(a_0'),\neg P(b_0)\}$ and $\Gr \cup \Ax_R\cup \CC\cup\{P(b_0)\land P(b_0'),\neg P(a_1)\}$ using Prover9. By Proposition~\ref{prop-cl}, we obtain $b_0\in\mathrm{cl}(a_0)$ and $a_1\in\mathrm{cl}(b_0)$. By symmetry, we have $b_i\in \mathrm{cl}(a_i)$ and $a_{i+1}\in\mathrm{cl}(b_i)$ for each $i=0,1,2,3$. Therefore, each $\langle a_i\rangle$ ($i=0,1,2,3$) is left absolutely cofinal.

Consider the set $\mathfrak{S}^P$ with respect to the pairs $(e,a_i)$ ($i=0,1,2,3$). We prove that the theory $\Gr\cup \Ax_R \cup \AxPC'\cup \PB \cup \mathfrak{S}^P$ is inconsistent (Task 17.3) using Prover9. By Proposition~\ref{prop-fixed-point}, the group does not admit nontrivial left-orderable quotients. The non-left-orderability follows from the nontriviality, which can be verified by building a model (Task 17.4) of $\Gr\cup \Ax_R\cup \{e\neq a_0\}$ using Mace4.
\end{example}

\begin{example}\label{ex:Hyde}
   Hyde \cite{hyde2019group} proved that the group $\mathrm{Homeo}(D,\partial D)$ of homeomorphisms of the disc that fix the boundary is not left-orderable. He constructed a subgroup $H$ generated by six elements $a,b,c_0,d_0, c_1, d_1$, corresponding to $\alpha^{-1},\beta^{-1},\gamma, \delta, \gamma^{\eta}, \delta^{\eta}$ on \cite[page 4]{hyde2019group}. Let the subgroups $H_0, H_1$ be generated by $a,b,c_0,d_0$ and $a,b,c_1,d_1$ in $H$. Then he proved that, for any left order on $H_0$ (resp. $H_1$), we have $\max(a,a^{-1})<\max(b,b^{-1})$ (resp. $\max(a,a^{-1})>\max(b,b^{-1})$).
   
   In this example, we establish the non-left-orderability through automated reasoning. This proof is different from Hyde's proof and the one in \cite{triestino2021james}.
   
   The elements  $a,b,c_0,d_0, c_1, d_1$ satisfy the following relations: $ab=ba$, $c_0b=bc_0$, $d_0 b=b d_0$, $c_0 a^3 c_0 = a^3$, $d_0 a^3 d_0 = a^3$, $(d_0^{-1} c_0 d_0 a)^6 =a^6 b^{36}$, $c_1a=ac_1$, $d_1a=ad_1$, $c_1 b^3 c_1 = b^3$, $d_1 b^3 d_1= b^3$, $(d_1^{-1} c_1 d_1 b)^6 =a^{36} b^{6}$. Note that the terms containing $a^{36}$ or $b^{36}$ pose a significant computational challenge for the automated prover. To address this issue, our first task is to manually simplify these relations.

   Define the set of group elements $T=\{t_0,t_1,t_2,t_3,t_4,t_5,t_6\}$ in $H$ by $t_0=a^3$, $t_1=b^3$, $t_2=t_1^{-2} d_0^{-1} c_0 d_0 $, $t_3= t_0^{-2} d_1^{-1} c_1 d_1$, $t_4= t_2 a$, $t_5=t_3 b$, $t_6=t_4^3$, $t_7=t_5^3$. Let $R_0$ be the set of the following relations: $t_0=a^3$, $t_0 t_1=t_1 t_0$, $c_0 t_1 = t_1 c_0$, $d_0 t_1=t_1 d_0$, $c_0 t_0=t_0 c_0^{-1}$, $d_0 t_0=t_0d_0^{-1}$, $t_1^2 t_2 =d_0^{-1} c_0 d_0$, $t_4=t_2 a$, $t_6=t_4^3$, $t_6^2=t_0^2$. Similarly, let $R_1$ be the set of the following relations: $t_1=b^3$, $t_1 t_0=t_0 t_1$, $c_1 t_0=t_0 c_1$, $d_1 t_0=t_0 d_1$, $c_1 t_1=t_1 c_1^{-1}$, $d_1 t_1=t_1 d_1^{-1}$, $t_0^2 t_3=d_1^{-1} c_1 d_1$, $t_5= t_3 b$, $t_7=t_5^3$, $t_7^2=t_1^2$. Then we can check that $R_0$ and $R_1$ are satisfied in $H$. Note that the sets $R_0$ and $R_1$ are symmetric: $R_1$ can be obtained by replacing $a,c_0,d_0,t_0,t_1,t_2,t_4,t_6$ in $R_0$ with $b,c_1,d_1,t_1,t_0,t_3,t_5,t_7$ respectively.
   
   Let $G$ be the group with presentation \[\langle\{a,b,c_0,d_0,c_1,d_1\}\cup T\;|\;R_0\cup R_1\rangle,\] then $H$ is a quotient group of $G$. By the $\mathrm{Homeo}(D,\partial D)$-representation \cite{hyde2019group} of $H$, we conclude that $H$ is nontrivial. We are going to prove that $G$ does not admit nontrivial left-orderable quotients, which implies that $H$ is not left-orderable.

   For each $t\in\{a,c_0,d_0, t_1\}$, we check that $\Gr\cup\Ax_{R_0}\cup \CC \cup\{P(t_0)\land P(t_0'), \neg P(t)\}$ is inconsistent (Task 18.1, Task 18.2, Task 18.3 and Task 18.4) using Prover9. By Proposition~\ref{prop-cl}, we obtain that $a,c_0,d_0,t_1\in \mathrm{cl}(t_0)$ in $G$. By symmetry, we have $b,c_1,d_1,t_0\in \mathrm{cl}(t_1)$. Since $G$ is generated by $a,b,c_0,d_0,c_1,d_1$, we have $\mathrm{cl}(t_0)=\mathrm{cl}(t_1)=G$. In other words, $\langle t_0\rangle$ and $\langle t_1 \rangle$ are left absolutely cofinal in $G$. 

   We verify that the theories $\Gr\cup\Ax_{R_0}\cup \CC \cup\{P(t_0\cdot t_1)\land P((t_0\cdot t_1)'), \neg P(t_1)\}$ and $\Gr\cup\Ax_{R_0}\cup \CC \cup\{P(t_0\cdot t_1')\land P((t_0\cdot t_1')'), \neg P(t_1)\}$ are inconsistent (Task 18.5 and Task 18.6) using Prover9. By Proposition~\ref{prop-cl}, we obtain $t_1 \in \mathrm{cl}(t_0 t_1)$ and $t_1\in \mathrm{cl}(t_0t_1^{-1})$, which implies that $\langle t_0 t_1\rangle$ and $\langle t_0 t_1^{-1}\rangle$ are left absolutely cofinal in $G$.

   Consider the set $\mathfrak{S}^P$ with respect to the pairs $(e,t_0)$, $(e, t_1)$, $(e, t_0\cdot t_1)$ and $(e, t_0\cdot t_1')$. In order to apply Proposition~\ref{prop-fixed-point}, we would like to prove the inconsistency of $\Gr\cup \Ax_{R_0}\cup \Ax_{R_1}\cup \AxPC'\cup \PB\cup \Isolated(\{2\})\cup \mathfrak{S}^P$. However, this task is computationally challenging for Prover9, so we break the task into two cases according to the comparison between $\max(a,a^{-1})$ and $\max(b,b^{-1})$. 

   We verify that $\Gr\cup\Ax_{R_0}\cup\AxPC'\cup \PB\cup\Isolated(\{2\})\cup \mathfrak{S}^P\cup\{ P(t_0'\cdot t_1')\lor P(t_0'\cdot t_1)\}$ and $\Gr\cup\Ax_{R_1}\cup\AxPC'\cup \PB\cup\Isolated(\{2\})\cup \mathfrak{S}^P\cup\{\neg(P(t_0'\cdot t_1')\lor P(t_0'\cdot t_1))\}$ are inconsistent (Task 18.7 and Task 18.8) by Prover9. Since either $P(t_0'\cdot t_1')\lor P(t_0'\cdot t_1)$ or $\neg(P(t_0'\cdot t_1')\lor P(t_0'\cdot t_1))$ holds, the theory $\Gr\cup \Ax_{R_0}\cup \Ax_{R_1}\cup \AxPC'\cup \PB\cup \Isolated(\{2\})\cup \mathfrak{S}^P$ is inconsistent. By Proposition~\ref{prop-fixed-point}, the group $G$ does not admit nontrivial left-orderable quotients.
\end{example}

\section{Automated reasoning tasks}\label{sec:tasks}
We have applied our methodology to many groups given by their presentations. In these examples, numerous automated reasoning tasks are performed either by Prover9 to derive a contradiction or by Mace4 to find a finite model. We always use Knuth-Bendix ordering when performing Prover9 tasks. 

We provide a Python 3 script on \cite{zenodo_dataset} that generates input files for Prover9 and Mace4, and runs them by calling the automated theorem provers. This script comes with following predefined axiom sets in Prover9/Mace4 format: $\Gr$, $\AxL$, $\OrdL$, $\OrdB$, $\AxC$, $\OrdC$, $\OrdCB$, $\AxPC$, $\PB$, $\AxPCL$, $\PCB$, $\CC$, and $\Isolated(M)$. It also generates axiom sets $\Ax_R$, $\mathfrak{S}$, and $\mathfrak{S}^P$ from given pairs or triples.

For each automated reasoning task described in this paper, we create a task specifying the program name (\texttt{prover9} or \texttt{mace4}) and the selected axiom sets, and then execute this task. To run the Python program, one need to install Prover9 and Mace4 \cite{prover9-mace4}, and set the value of \texttt{BIN\_LOCATION} to the location of the binary files. The output of this program consists of the standard inputs and outputs of the automated theorem provers, which can also be accessed on \cite{zenodo_dataset}.

We execute our code using Python 3.9.12 and version LADR-Dec-2007 of Prover9 and Mace4, running on hardware with an AMD Ryzen 7 4800HS processor operating at 2.90 GHz, equipped with 16.0 GB of RAM, and operating on Windows 11 Home. Table~\ref{tab:summary1} summarises the time spent by Prover9 in seconds. 

\begin{table}[!ht]
 \centering
\begin{tabular}{||c||c|c|c||}
\hline 
\textbf{Prover9 tasks} & \textbf{User CPU time} & \textbf{System CPU time}  &  \textbf{Wall clock time}\\
\hline 
     Task 1.2 & 0.00 & 0.00 & 0   \\
\hline 
     Task 2.2 & 0.00 & 0.00 &  0 \\
\hline 
    Task 3.2  & 4.83  & 0.05  &  31\\
\hline 
    Task 3.3  & 1.78  & 0.08  &  31\\
    \hline
    Task 4.2 & 27.77 & 0.16 & 38\\
    \hline
    Task 5.2 & 973.94&16.66&1756\\
    \hline
    Task 6.1& 39.84&0.06& 44\\
    \hline
    Task 7.2& 0.00&0.00&0\\
    \hline
    Task 8.2& 0.00& 0.00& 0\\
    \hline
    Task 9.2& 15.31&0.00&19\\
    \hline
    Task 9.4& 25.72&0.03&32\\
    \hline
    Task 9.6&0.80&0.00&10\\
    \hline
    Task 9.8&6.73&0.05&40\\
    \hline
    Task 10.2& 59.33& 0.05&66\\
    \hline
    Task 10.4& 0.00&0.03&0\\
    \hline
    Task 11.1& 10.42&0.01&13\\
    \hline
    Task 11.2& 3.41& 0.00& 7\\
    \hline
    Task 12.1& 0.00& 0.00&0\\
    \hline
    Task 15.1& 0.00&0.00&1\\
    \hline
    Task 16.1&2.06&0.00&5\\
    \hline
    Task 16.2&0.00& 0.00&1\\
    \hline
    Task 17.1& 6.61&0.08&21\\
    \hline
    Task 17.2& 5.34&0.00&20\\
    \hline
    Task 17.3& 0.75&0.01&9\\
    \hline
    Task 18.1&0.00&0.00&0\\
    \hline
    Task 18.2& 0.00&0.00&0\\
    \hline
    Task 18.3& 0.00&0.00&0\\
    \hline
    Task 18.4& 0.16& 0.00&2\\
    \hline
    Task 18.5& 1.81&0.06&12\\
    \hline
    Task 18.6&0.64&0.00&8\\
    \hline
    Task 18.7&71.31&4.23&245\\
    \hline
    Task 18.8& 5.33&0.03&18\\
\hline
\end{tabular}
    \vspace*{3mm}
    \caption{Time spent by Prover9, measured in seconds.}
    \label{tab:summary1}
\end{table}

All of our Mace4 tasks can be solved in less than one second, except for Task 8.1, which takes 139 seconds to solve. Table~\ref{tab:summary2} summarises the sizes of the finite models found by Mace4.

\begin{table}[!ht]
    \centering
    \begin{tabular}{||c|c||c|c||}
    \hline
        \textbf{Mace4 tasks} & \textbf{model size} &\textbf{Mace4 tasks} &  \textbf{model size}\\
        \hline
     Task 1.1& 2&Task 9.1&14\\
     \hline
     Task 2.1& 2&Task 9.3&14\\
     \hline
     Task 3.1& 4 &Task 9.5&14\\
     \hline
     Task 4.1&6&Task 9.7&14\\
     \hline
     Task 5.1&14&Task 10.1&5\\
     \hline
     Task 7.1&6&Task 10.3&5\\
     \hline
     Task 8.1&60&Task 17.4&5\\
    
        \hline
    \end{tabular}
    \vspace*{3mm}
    \caption{Sizes of finite models found by Mace4.}
    \label{tab:summary2}
\end{table}

\subsubsection*{Acknowledgement}
The work of first and third named authors was supported by the Leverhulme Trust Research Project Grant RPG-2019-313. Part of the work was done when the second author visited Institut des Hautes Études Scientifiques.

\bibliographystyle{alpha}
\bibliography{ref}
\end{document}